\newtheorem{thm}{Theorem}[section]
\newtheorem{cor}[thm]{Corollary}
\newtheorem{lemma}[thm]{Lemma}
\newtheorem{prop}[thm]{Proposition}
\newtheorem{defn}[thm]{Definition}
\theoremstyle{remark}
\theoremstyle{definition}
\newtheorem{rmk}[thm]{Remark}
\newtheorem{exa}[thm]{Example}
\newtheorem{notation}[thm]{Notation}
\numberwithin{equation}{thm}
\def\beq{\begin{equation}}
\def\eeq{\end{equation}}
\def\beqn{\begin{equation*}}
\def\eeqn{\end{equation*}}
\def\ben{\begin{enumerate}}
\def\een{\end{enumerate}}
\def\crash#1{}
\def\F{{\mathbb F}}
\def\L{{\mathbb L}}
\def\N{{\mathbb N}}
\def\R{{\mathbb R}}
\def\Z{{\mathbb Z}}
\def\l{\left}
\def\r{\right}
\def\cf{\emph{cf.}~}
\def\ie{\emph{i.e.}~}
\def\lc{\emph{loc.cit.}~}
\def\cM{{\mathcal M}}
\def\cO{{\mathcal O}}
\def\sR{{\mathscr R}}
\def\bC{{\mathbf C}}
\def\bD{{\mathbf D}}
\def\bI{{\mathbf I}}
\def\bP{{\mathbf P}}
\def\what{\widehat}
\def\a{\alpha}
\def\be{\beta}
\def\la{\lambda}
\def\na{{\rm na}}
\def\Spec{{\rm Spec \,}}
\def\Hom{{\rm Hom \,}}
\def\triv{{\rm triv}}
\def\im{{\rm Im\,}}
\def\id{{\rm id\,}}
\def\ker{{\rm Ker\,}}
\def\coker{{\rm Coker\,}}
\def\diss{{\rm diss\,}}
\def\Rat{{\rm Rat\,}}
\def\ul{\underline}
\def\im{{\rm Im\,}}
\def\coim{{\rm Coim\,}}
\def\id{{\rm Id\,}}
\def\cone{{\rm cone\,}}
\def\born{{\rm born\,}}
\def\Zar{{\rm Zar\,}}
\def\ol{\overline}
\def\colim{\mathop{\mathop{\rm colim}\limits_{\displaystyle\longrightarrow}}}
\def\limpro{\mathop{\lim\limits_{\displaystyle\leftarrow}}}
\def\limind{\mathop{\lim\limits_{\displaystyle\rightarrow}}}
\def\then{\Rightarrow}
\def\lt{\langle}
\def\gt{\rangle}
\def\Spa{{\rm Spa\,}}
\def\op{{\rm op\,}}
\def\ar{{\rm ar\,}}
\def\LH{{\rm LH\,}}
\def\Tot{{\rm Tot\,}}
\def\bMod{\mathbf{Mod}}
\def\bInd{\mathbf{Ind}}
\def\bBorn{\mathbf{Born}}
\def\wotimes{\widehat{\otimes}}
\def\ootimes{{\ol{\otimes}}}
\def\bNr{\mathbf{Nr}}
\def\bBan{\mathbf{Ban}}
\def\bComm{\mathbf{Comm}}
\def\bSimp{\mathbf{Simp}}
\def\bAff{\mathbf{Aff}}
\newcommand{\minus}{\scalebox{0.55}[1.0]{$-$}}
\newcommand{\nocontentsline}[3]{}
\newcommand{\tocless}[2]{\bgroup\let\addcontentsline=\nocontentsline#1{#2}\egroup}
\author{Federico Bambozzi, Kobi Kremnizer}
\title{On the sheafyness property of spectra of Banach rings}
\thanks{The first named author has been supported by the DFG research grant BA 6560/1-1 ``\emph{Derived geometry and arithmetic}''.}
\begin{document}

\begin{abstract}
Let $R$ be a non-Archimedean Banach ring, satisfying some mild technical hypothesis that we will specify later on. We prove that to $R$ one can associate a homotopical Huber spectrum $|\Spa^h(R)|$ via the introduction of the notion of derived rational localizations. The spectrum so obtained is endowed with a derived structural sheaf $\cO_{|\Spa^h(R)|}$ of simplicial Banach algebras for which the derived Tate-\u{C}ech complex is strictly exact. Under some hypothesis we can prove that there is a canonical morphism of sites $\Spa(R) \to |\Spa^h(R)|$ that is an equivalence in some well-known examples of non-sheafy Banach rings. This permits to use the tools of derived geometry to understand the geometry of $\Spa(R)$ also when $H^0(\cO_{\Spa(R)})$ is not a sheaf. 
\end{abstract}

\maketitle

\tableofcontents

\section{Introduction}

A well-known limitation of analytic geometry with respect to algebraic geometry is that it lacks of an abstract approach. In particular, given a general (commutative) Banach ring\footnote{We will mainly bound our discussion to the case when $R$ is non-Archimedean for technical convenience, but we expect the whole theory to work in full generality.} $R$ it is not clear, not even in the case when $R$ is a Banach $k$-algebra over a valued field, how to interpret $R$ as a space of functions on some geometrical object. 
More precisely, it is not clear what the correct notion of the analytic spectrum of $R$ is. Several notions of topological spectra have been proposed, like the Berkovich spectrum of $R$, here denoted by $\cM(R)$. But besides the extreme success of this notion in non-Archimedean geometry, Mihara showed in \cite{Mih} that there exist Banach $k$-algebras for which the (natural) definition of structural sheaf on $\cM(R)$ does not even give a well-defined pre-sheaf. Another possibility is to associate to $R$ an affinoid pair $(R, R^\circ)$ in the sense of Huber and to consider the adic spectrum $\Spa(R, R^\circ)$. In this case one knows that the structural pre-sheaf is always well defined but it lacks of the sheaf property in general (\cf \cite{Mih} and \cite{Buzz} for counterexamples, or check Section \ref{sec:examples} where some examples are presented). 

In \cite{Mih} and \cite{Buzz} some conditions on $R$ that ensure the sheafyness of the structural pre-sheaf are given. One drawback of these conditions is that they are difficult to check in practice and another one is that there is a increasing need of more general results for some applications of the theory (\cf \cite{Ked1} for example). In this work we propose a solution to this problem based on the theory of quasi-abelian categories and on the methods developed in \cite{BaBeKr2}. More precisely, we propose to use the methods of derived geometry in order to obtain a derived structural sheaf that satisfies descent in the derived setting. This is done compatibly with the theory developed up to now and therefore provides an extension of it to a broader class of Banach rings that do not seem treatable with the classical methods.

We now summarize our main ideas in the case when $R$ is a Banach $k$-algebra, where $k$ is a non-Archimedean field, for simplicity of exposition. If $R$ is affinoid, then the main results of \cite{BeKr} imply that $\Spa(R)$ is an open sub-site of the homotopy Zariski site (\cf Section \ref{sec:quasi_abelian} where we use the more precise language of $\infty$-categories) and the structural sheaf on $\Spa(R)$ is reconstructed as the restriction of the structural sheaf of the homotopy Zariski site. This result is obtained by presenting the algebras of functions on rational subdomains of $\Spa(R)$ as Koszul dg-algebras concentrated in degree $0$. If now $R$ is generic, we can always write in a canonical way
\[ R \cong {\limind_{i \in I}}^{\le 1} R_i \]
where the symbol $\limind^{\le 1}$ means that the colimit is computed in the contracting category of Banach algebras (\cf Section \ref{sec:bornological}), $R_i$ are affinoid and the indexing set $I$ is directed. Unlike the presentations of rings and algebras in algebraic geometry, the direct limit functor $\limind^{\le 1}$ is not exact (not even if the indexing set is filtered) and this creates problems in extending the theory from the finite dimensional to the infinite dimensional spaces. Nevertheless, we can define the Koszul dg-algebras associated to the rational subdomain $U(\frac{f_1}{f_0}, \ldots, \frac{f_n}{f_0}) \subset \Spa(R)$ determined by elements $f_0, \ldots, f_n \in R$ that generate the unit ideal. We will show that these dg-algebras determine a small sub-site of the homotopy Zariski site canonically associated to $R$, that we denote $\Spa^h(R)$. We show that under some hypothesis on $R$ there is a canonical continuous map of sites $\Spa(R) \to \Spa^h(R)$. 
We also provide an explicit example of a non-sheafy (in the usual sense) Banach $k$-algebras for which $\Spa(R)$ and $\Spa^h(R)$ agree (\cf Example \ref{exa:buzz}) and compute its derived structural sheaf on a cover for which the degree $0$ part of the derived structural sheaf is not a sheaf.

The paper is structured as follows. In Section \ref{sec:quasi_abelian} we recall some basic language of the theory of quasi-abelian categories and some fundamental results from \cite{BaBeKr2} that are used throughout the whole paper. We also recall how the homotopy Zariski topology is defined and how it can be used to define derived analytic spaces over any given Banach ring $R$. In Section \ref{sec:bornological} we recall the definitions of the quasi-abelian categories that are used in this paper, the categories of Banach modules, the contracting categories of Banach modules and the categories of bornological modules over a fixed Banach (or bornological) ring. 
In Section \ref{sec:rational} we prove our main results. We first prove that rational localizations of affinoid $A$-algebras, with $A$ any strongly Noetherian Tate ring, are open localizations for the homotopy Zariski topology and that the Huber spectrum identifies as an open sub-site of the homotopy Zariski topos, generalizing the results of \cite{BeKr}. Then, we show how to associate to any Banach $A$-algebra $R$ a $\infty$-site $\Spa^h(R)$ that is a small sub-site of the homotopy Zariski site over $R$ using (derived) rational localization of $R$. Differently from the case when $R$ is an affinoid $A$-algebra the space $\Spa^h(R)$ is a derived analytic space in general. We also describe how one can find a continuous map $\Spa(R) \to \Spa^h(R)$ when $R$ is defined over a valued field. We conclude Section \ref{sec:rational} by showing how these results can be extended to more general bornological rings.

Finally, in Section \ref{sec:examples} we perform some explicit computations of the derived structural sheaf for spectra for which the standard definitions do not give a well-defined structural sheaf. We compute in detail the structural sheaf on a Laurent cover of a well-known non-sheafy Banach ring discussed by Buzzard-Verberkmoes (\cite{Buzz}) and Huber (\cite{Hub}). We show that in this case $\Spa^h(R) \cong \Spa(R)$ so that our main results define a derived structural sheaf directly on $\Spa(R)$ and we compare the derived structure with the non-derived one.

We conclude the paper discussing some possible generalizations of the results of this paper, mainly via the use of the theory of reified spaces introduced by Kedlaya in \cite{Ked3}.

\tocless{\section*{Acknowledgements}}

We would like to thank Ehud Hrushovski, Andrea Pulita and Wojtek Wawrow for discussions related to the topics of this paper. The authors are very grateful to Peter Scholze for pointing out a mistake on a first version of this paper.
\\

\section{Relative algebraic geometry over quasi-abelian categories} \label{sec:quasi_abelian}

In this section we recall some basic notions from the theory of quasi-abelian categories (\cf \cite{Sch}) and we recall how they have been used in \cite{BaBeKr2} (and also in \cite{BeKr}, \cite{BaBeKr}, \cite{BaBe}, \cite{BeKr2}, \cite{BeKeKr}) for defining derived geometry over the category of (simplicial) algebras over a symmetric monoidal quasi-abelian category.

A \emph{quasi-abelian} category is a pre-abelian category (\ie an additive category with all kernels and cokernels) such that the family of short strictly exact sequences forms a Quillen exact structure. Recall that a morphism $f: A \to B$ in a pre-abelian category is called \emph{strict} if the canonical morphism $\coim(f) \to \im(f)$ is an isomorphism.

Let $\bC$ be a quasi-abelian category. It is possible to associate to $\bC$ its derived category $D(\bC)$ as done in \cite{Sch}. Later on we will also suppose that $\bC$ has a closed symmetric monoidal structure that we denote by $(\minus) \ootimes (\minus): \bC \times \bC \to \bC$ and $\ul{\Hom}(\minus, \minus): \bC^\op \times \bC \to \bC$.

\begin{defn} \label{defn:projective_object}
Let $P \in \bC$. We say that $P$ is \emph{projective} if the functor $\Hom(P, -)$ is exact (in the sense of \cite{Sch}, also \cf Definition \ref{defn:exact_functor} for a recall of the notions of exactness for quasi-abelian categories).
\end{defn}

We say that $\bC$ \emph{has enough projectives} if for any $X \in \bC$ there exists a strict epimorphism $P \to X$ with $P$ projective. If $\bC$ has enough projectives and is complete and cocomplete, then Theorem 3.7 of \cite{BaBeKr2} implies that on $\bSimp(\bC)$, the quasi-abelian category of simplicial objects on $\bC$, there is a symmetric monoidal combinatorial model structure whose homotopy category is equivalent to $D^{\le 0}(\bC)$, giving a quasi-abelian version of the Dold-Kan equivalence. 

So, from now on we fix a quasi-abelian closed symmetric monoidal category $\bC$ that is complete and cocomplete and that has enough projectives. Before going on we give some key examples of such categories. Later on we will recall more details about the quasi-abelian categories we will work with.

\begin{exa} \label{exa:quasi_abelian}
\begin{enumerate}
\item The category of abelian groups is clearly quasi-abelian (as it is abelian), has enough projectives, a symmetric tensor product and an internal hom. In the same fashion, the category of modules over a commutative ring is (quasi-)abelian with enough projectives, with a symmetric tensor product and an internal hom.  
\item Let $R$ be a Banach ring. If $R$ is non-Archimedean, then the category $\bBan_R^{\le 1, \na}$ of ultrametric Banach $R$-modules with contracting homomorphisms, \ie bounded homomorphism $\phi: X \to Y$ such that $|\phi(x)| \le |x|$, is quasi-abelian. One can show that $\bBan_R^{\le 1, \na}$ has a closed symmetric monoidal structure (that we describe in more details later on), it is complete and cocomplete and has enough projective objects.
\item Let $R$ be a Banach ring. The category $\bBan_R$ of Banach $R$-modules with bounded morphisms, \ie homomorphism $\phi: X \to Y$ such that $|\phi(x)| \le C |x|$ for a fixed $C > 0$, is quasi-abelian, closed symmetric monoidal and has enough projectives, but it is not complete nor cocomplete (\cf Section 3.1 of \cite{BaBe}). To remedy to the fact that limits and colimits do not exist one can consider the category $\bInd(\bBan_R)$ and its subcategory $\bBorn_R$ of bornological modules. Both of these categories are quasi-abelian closed symmetric monoidal categories, complete and cocomplete and with enough projectives. Moreover, they are derived equivalent, \ie $D(\bInd(\bBan_R)) \cong D(\bBorn_R)$, and we will show in Proposition \ref{prop:ind_born_monoidal_equivalent} that the equivalence preserves the monoidal structure (a fact for which we cannot find a reference in literature). We will prefer to work with $\bBorn_R$ but, from the perspective of the derived geometry that will be introduced in the following pages, these two categories are equivalent.
\end{enumerate}
\end{exa}

We now recall some notions and results about exactness of additive functors between quasi-abelian categories. 

\begin{defn} \label{defn:exact_functor}
An additive functor $F: \bC \to \bD$ between two quasi-abelian categories is called \emph{left exact} if for any exact sequence
\[ 0  \to A \stackrel{f}{\to} B \stackrel{g}{\to} C \]
where $f$ and $g$ are strict morphisms, the sequence
\[ 0  \to F(A) \stackrel{F(f)}{\to} F(B) \stackrel{F(g)}{\to} F(C) \]
is exact with $F(f)$ strict. The functor $F$ is called \emph{strictly left exact} if for any exact sequence 
\[ 0  \to A \stackrel{f}{\to} B \stackrel{g}{\to} C \]
where $f$ is a strict morphisms, the sequence  
\[ 0  \to F(A) \stackrel{F(f)}{\to} F(B) \stackrel{F(g)}{\to} F(C) \]
is exact with $F(f)$ strict. Dually one defines the notions of right exactness and strictly right exactness. We say that $F$ is \emph{exact} (resp. \emph{strictly exact}) if it is both left and right exact (resp. both strictly left and strictly right exact).
\end{defn}

Definition \ref{defn:exact_functor} can be restated by saying that a functor is left exact if and only if preserves kernels of strict morphisms and it is strictly left exact if and only if it preserves kernels of all morphisms (and dually for right exactness). Notice that we slightly changed the terminology of \cite{Sch} where what we called strictly exact functor is called strongly exact functor.

The next proposition should be well known to experts but we cannot find a reference in literature where it is stated in this form.

\begin{prop} \label{prop:right_exact_mono}
Let $F: \bC \to \bD$ be a functor between quasi-abelian categories, then
\begin{enumerate}
\item if $F$ is right exact, then it is exact if and only if it preserves strict monomorphisms;
\item if $F$ is strictly right exact, then it is strictly exact if and only if it maps strict monomorphisms to strict monomorphisms and monomorphisms to monomorphisms.
\end{enumerate}
Dually for left exact and strictly left exact functors.
\end{prop}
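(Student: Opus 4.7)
The plan is to exploit the canonical factorization of morphisms in a quasi-abelian category, together with the reformulation (recorded just after Definition~\ref{defn:exact_functor}) that left exactness amounts to preservation of kernels of strict morphisms and strict left exactness to preservation of kernels of arbitrary morphisms. The key tool is the canonical factorization: a strict morphism $g: B \to C$ decomposes as a strict epimorphism $p: B \to \im(g)$ followed by a strict monomorphism $i: \im(g) \to C$, whereas an arbitrary morphism factors as $g = m \circ p$ with $p: B \to \coim(g)$ a strict epimorphism and $m: \coim(g) \to C$ a monomorphism that need not be strict.

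For part (1), the forward implication is immediate since every strict monomorphism is the kernel of its (strict) cokernel and left exactness preserves such kernels. For the converse, I assume $F$ is right exact and sends strict monomorphisms to strict monomorphisms, and take a strict $g: B \to C$ with kernel $f: A \to B$. The short strict exact sequence $0 \to A \to B \to \im(g) \to 0$ is sent by the right exact $F$ to an exact sequence $F(A) \to F(B) \to F(\im(g)) \to 0$ with $F(p)$ strict. Since $F(f)$ is a strict monomorphism by hypothesis one has $\im(F(f)) = F(A)$, so exactness at $F(B)$ identifies $F(A)$ with $\ker(F(p))$. The hypothesis also forces $F(i)$ to be a monomorphism, and therefore $\ker(F(g)) = \ker(F(p)) = F(A)$, which is precisely left exactness of $F$.

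For part (2), the forward direction follows from strict left exactness preserving all kernels: strict monos, being kernels, are sent to strict monos, and a (possibly non-strict) monomorphism $f$ is characterised by $\ker(f) = 0$, so $\ker(F(f)) = F(0) = 0$ keeps it a monomorphism. For the converse, given an arbitrary $g: B \to C$ with kernel $f: A \to B$, I factor $g = m \circ p$ as above. Strict right exactness identifies $F(p)$ with $\coker(F(f))$, while the hypothesis that $F$ preserves monomorphisms makes $F(m)$ a monomorphism, so $\ker(F(g)) = \ker(F(p))$. In any quasi-abelian category, $\ker(\coker(F(f))) = \im(F(f))$, and the strict monomorphism hypothesis on $F$ makes $\im(F(f)) = F(A)$, so $\ker(F(g)) = F(A)$ realised by the strict morphism $F(f)$, which is strict left exactness.

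The main obstacle is not mathematical depth but careful bookkeeping of which morphisms in the diagrams are strict: the whole argument hinges on the fact that strict right exactness preserves cokernels (whose kernels are images), while the two separate hypotheses about monomorphisms and strict monomorphisms in part (2) are exactly what is needed to ensure that these images and the non-strict middle factor $m$ behave correctly after applying $F$.
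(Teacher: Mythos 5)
Your proof is correct and follows essentially the same strategy as the paper's: factor the morphism through its coimage, use right exactness to control the cokernel part, and use the monomorphism hypotheses to control the remaining (strict or non-strict) mono factor. Your part (2) is in fact slightly more streamlined than the paper's, which compares $\coim(F(f))$ with $F(\coim(f))$ via a bimorphism/double-quotient argument, whereas you simply observe that $\ker(F(m)\circ F(p)) = \ker(F(p)) = \ker(\coker(F(f))) = \im(F(f)) = F(A)$ once $F(m)$ is known to be a monomorphism.
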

\begin{proof}
\begin{enumerate}
\item Left exact functors clearly preserve strict monomorphisms, so we only need to prove the converse.
We first notice that $F$ maps strict morphisms to strict morphism because every strict morphism can be written as a composition of a strict epimorphism followed by a strict monomorphism and $F$ preserves both classes of morphisms. Therefore, given an exact sequence of strict morphisms in $\bC$
\[ 0 \to A \to B \stackrel{f}{\to} C \]
then the sequence
\[ 0 \to F(A) \to F(B) \stackrel{F(f)}{\to} F(C)  \]
has strict morphisms. Consider the kernel-cokernel sequence
\[ 0 \to \im(F(f)) \to F(C) \to \coker(F(f)) \to 0 \]
and the sequence
\[ 0 \to F(\im(f)) \to F(C) \to F(\coker(f)) \to 0. \]
The latter one is a kernel-cokernel sequence because $F$ is right exact and $F$ preserves strict monomorphism.
Then, by the natural isomorphism $\coker(F(f)) \cong F(\coker(f))$ we deduce the isomorphism $\im(F(f)) \cong F(\im(f))$ and since $F(f)$ is strict we also deduce the isomorphisms 
\[ F(\im(f)) \cong F(\coim(f)) \cong \coim(F(f)). \]
Finally, from the kernel-cokernel exact sequence
\[ 0 \to F(A) \to F(B) \to F(\coim(f)) \to 0 \]
and the isomorphism $\coim(F(f)) \cong F(\coim(f))$ we deduce that $F(A) \cong \ker(F(f))$.
\item Strictly left exact functors clearly preserve strict monomorphisms and monomorphisms, so we only need to prove the converse.
Consider now an exact sequence 
\[ 0 \to A \to B \stackrel{f}{\to} C \]
where $A \cong \ker(f)$ but $f$ is not necessarily strict. Since $F$ preserves strict monomorphisms, as before we can deduce that $F(\im(f)) \cong \im(F(f))$. Since $F$ preserves both epimorphisms and monomorphisms it preserves bimorphisms, therefore we can deduce that the canonical morphism $\coim(F(f)) \to F(\coim(f))$ is a monomorphism. Indeed, in the canonical diagram
 \begin{equation} 
\begin{tikzpicture}
\matrix(m)[matrix of math nodes,
row sep=2.6em, column sep=2.8em,
text height=1.5ex, text depth=0.25ex]
{ \coim(F(f)) & \im(F(f))  \\
  F(\coim(f)) & F(\im(f)) \\};
\path[->,font=\scriptsize]
(m-1-1) edge node[auto] {} (m-1-2);
\path[->,font=\scriptsize]
(m-1-2) edge node[auto] {} (m-2-2);
\path[->,font=\scriptsize]
(m-1-1) edge node[auto] {} (m-2-1);
\path[->,font=\scriptsize]
(m-2-1) edge node[auto] {} (m-2-2);
\end{tikzpicture}
\end{equation}
the horizontal maps are bimorphisms and the right vertical map is an isomorphism, implying that the left vertical map is a monomorphism.
But since both $\coim(F(f))$ are quotients $F(\coim(f))$ of $F(B)$, this implies that they are actually isomorphic, and as before this implies that $F(A) \cong \ker(F(f))$.
\end{enumerate}
\end{proof}

It is not enough for a strictly right exact functor to maps strict monomorphisms to strict monomorphisms to deduce strict left exactness. Our main example of a functor that is strictly right exact and left exact without being strictly left exact is the completion functor $\what{(\minus)}: \bNr_R \to \bBan_R$ from the category of normed modules over a Banach ring $R$ to the category of Banach $R$-modules. This functor is strictly right exact because it is left adjoint to the embedding $\bBan_R \to \bNr_R$ and it can be checked that it preserves strict monomorphisms. But it is also easy to find examples of (non-strict) monomorphism that are not preserved by $\what{(\minus)}$. 
Let $F: \bC \to \bD$ be an additive functor between quasi-abelian categories, we now recall how to define its left and right derived functors. A subcategory $\bP_F \subset \bC$ is called \emph{$F$-projective} if 
\begin{enumerate}
\item for any $X \in \bC$ there exists a strict epimorphism $P \to X$ with $P \in \bP_F$;
\item for any strictly exact sequence 
\[ 0 \to P \to P' \to P'' \to 0 \]
with $P', P'' \in \bP_F$ one has that $P \in \bP_F$;
\item for any strictly exact sequence
\[ 0 \to P \to P' \to P'' \to 0 \]
in $\bP_F$ one has that
\[ 0 \to F(P) \to F(P') \to F(P'') \to 0 \]
is strictly exact in $\bD$.
\end{enumerate}
In a dual fashion one can define the notion of \emph{$F$-injective} category $\bI_F$. In the case when $F$ admits a $F$-projective category (resp. $F$-injective category) it has a left derived functor $\L F: D^-(\bC) \to D^-(\bD)$ (resp. right derived functor $\R F: D^+(\bC) \to D^+(\bD)$) defined using $F$-projective resolutions (resp. $F$-injective resolutions). One major difference between derived functors in the theory of abelian categories and derived functors in the theory of quasi-abelian categories is that objects of $\bP_F$ need not to be $F$-acyclic, \ie for an object $P \in \bP_F$ (resp. $I \in \bI_F$) it is not necessarily the case that
\[ \L F(P) \cong F(P) \ \ \ (\text{resp. } \R F(I) \cong F(I)) \]
in $D^-(\bD)$ (resp. $D^+(\bD)$), although it may happen if $F$ is supposed to have some specific properties (like to preserve strict morphisms). In general the class of $F$-acyclic objects form a sub-class of $\bP_F$ (resp. $\bI_F$) and we say that $F$ has enough $F$-acyclic objects if every object of $X \in \bC$ admits a strict epimorphism $P \to X$  where $P$ is $F$-acyclic (resp. a strict monomorphism $X \to I$ where $I$ is $F$-acyclic). In this latter case the class of $F$-acyclic objects can be used to compute the derived functors of $F$.

\subsection{The left-heart of a quasi-abelian category}

The derived category $D(\bC)$ has a t-structure called the \emph{left t-structure} (of course there exits also a right t-structure, but we only use the left one in this work) whose truncation functors are denoted by $\tau_L^{\le n}$ and $\tau_L^{\ge n}$. The explicit definition of this t-structure is not important for our discussion as we will need only the properties that we discuss in this section\footnote{The interested reader can find the details about the t-structures on $D(\bC)$ in Section 1.2 of \cite{Sch}.}. The heart of the left t-structure is denoted by $\LH(\bC)$ and it is obviously an abelian category. Moreover, one has that $D(\bC) \cong D(\LH(\bC))$, \ie $\bC$ and $\LH(\bC)$ are derived equivalent.
We also notice that the left t-structure gives the correct notion of cohomology of an object $X \in D(\bC)$, given by
\[ \LH^n(X) = \tau_L^{\le n}(\tau_L^{\ge n}(X)) \in \LH(\bC) \]
as $X \cong 0$ in $D(\bC)$ if and only if $\LH^n(X) \cong 0$ for all $n$.

\begin{prop} \label{prop:LH_localization}
The objects of $\LH(\bC)$ can be described as complexes of objects of $\bC$ of the form $[ 0 \to E \to F \to 0 ]$ (with $F$ in degree $0$), where $E \to F$ is a monomorphism. The morphisms of such complexes are commutative squares localized by the multiplicative system generated by the ones that are simultaneously cartesian and cocartesian.
\end{prop}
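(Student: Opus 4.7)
The plan is to invoke Schneiders' explicit description of the left t-structure on $D(\bC)$ and then use standard calculus-of-fractions arguments to identify both the objects and the morphisms of $\LH(\bC)$.

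First, I would recall that, by Schneiders' construction of the left t-structure, every object $X^\bullet \in \LH(\bC) = D^{\le 0}_L(\bC) \cap D^{\ge 0}_L(\bC)$ is quasi-isomorphic to a two-term complex $[E \to F]$ concentrated in degrees $-1$ and $0$ whose differential is a monomorphism. Indeed, $D^{\ge 0}_L(\bC)$ is characterized exactly as the subcategory of complexes admitting a representative of this form, while the condition $X^\bullet \in D^{\le 0}_L(\bC)$ forces any representative to be concentrated in degrees $\le 0$. Taking the intersection yields the stated description of the objects of $\LH(\bC)$.

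Next I would describe the morphisms. Given two heart objects, presented as $[E \to F]$ and $[E' \to F']$, any morphism between them in $D(\bC)$ is represented by a roof
\[ [E \to F] \xleftarrow{\sim} Z^\bullet \to [E' \to F']. \]
Applying the truncation functors $\tau^{\ge -1}_L \tau^{\le 0}_L$ to $Z^\bullet$ does not change the morphism being represented, since the source and the target already lie in the heart; this allows one to replace $Z^\bullet$ by a two-term complex with monomorphism differential, and then to collapse the roof to a genuine morphism of two-term complexes, \ie a commutative square from $[E \to F]$ to $[E' \to F']$. Thus every morphism in $\LH(\bC)$ can be represented by such a commutative square, and two squares represent the same morphism precisely when they are linked by a zig-zag of quasi-isomorphisms of two-term complexes.

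Finally I would identify these quasi-isomorphisms. A morphism of two-term complexes $[E \to F] \to [E' \to F']$, both with monomorphism differential, is a quasi-isomorphism (\ie becomes an iso in $\LH(\bC)$) if and only if the corresponding square is simultaneously cartesian and cocartesian: the cocartesian condition says that the induced map on cokernels is an iso, while the cartesian condition, together with the injectivity of the horizontal differentials, forces the kernel-side contribution to the left-heart cohomology to vanish. Once this is established, the morphisms in $\LH(\bC)$ are obtained from commutative squares between such two-term complexes by formally inverting the bicartesian ones, which form a multiplicative system because pullback and pushout of a bicartesian square along any morphism remain bicartesian.

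The main technical obstacle will be the last step: verifying that a map of two-term complexes with monomorphism differentials is a quasi-isomorphism exactly when the corresponding square is bicartesian. In the abelian case this is an immediate consequence of the snake lemma, but in the quasi-abelian setting one must be careful about the distinction between strict and non-strict morphisms when identifying left-heart cohomology. Proposition \ref{prop:right_exact_mono} together with the basic kernel-cokernel machinery of $\bC$, however, is sufficient to push the argument through.
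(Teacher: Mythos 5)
Your plan is correct and its skeleton matches the paper's: both rest on Schneiders' description of the left t-structure (\cite[Corollary 1.2.20]{Sch}) for the identification of objects, and both reduce the description of morphisms to showing that a map of two-term complexes with monic differentials is invertible in $D(\bC)$ exactly when the square is bicartesian. Where you diverge is in how that last equivalence is verified. The paper simply writes out the mapping cone,
\[ \cone(f) = [\, 0 \to E_0 \xrightarrow{\binom{-d_0}{f_E}} F_0 \oplus E_1 \xrightarrow{(f_F, d_1)} F_1 \to 0 \,], \]
and observes that strict exactness of this three-term complex at the middle term \emph{is}, by definition, the statement that the square is simultaneously cartesian (the first map is the kernel of the second) and cocartesian (the second map is the cokernel of the first). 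No diagram chase or snake-lemma substitute is needed, and none of the strict/non-strict subtleties you flag in your last paragraph arise. Your proposed route --- tracking the induced maps on kernels and cokernels separately --- can be made to work, but it is exactly where quasi-abelian pathologies live (a map inducing isomorphisms on kernels and cokernels of non-strict morphisms need not be treated naively), and Proposition \ref{prop:right_exact_mono} is not really the right tool for it, since that result concerns exactness of \emph{functors} rather than cone computations. If you carry out your plan, I would recommend replacing the final step by the one-line cone computation: it is both shorter and immune to the obstacle you correctly identify.
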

\begin{proof}
This is in \cite[Corollary 1.2.20]{Sch}. Explicitly, given two objects $[ 0 \to E_0 \to F_0 \to 0 ]$ and $[ 0 \to E_1 \to F_1 \to 0 ]$ and a morphism $f = (f_0, f_1)$ defined by
\begin{equation} \label{eq:pull_push_square}
\begin{tikzpicture}
\matrix(m)[matrix of math nodes,
row sep=2.6em, column sep=2.8em,
text height=1.5ex, text depth=0.25ex]
{ 0 & E_0 & F_0 & 0  \\
  0 & E_1 & F_1 & 0 \\};
\path[->,font=\scriptsize]
(m-1-1) edge node[auto] {} (m-1-2);
\path[->,font=\scriptsize]
(m-1-2) edge node[auto] {$d_0$} (m-1-3);
\path[->,font=\scriptsize]
(m-1-3) edge node[auto] {} (m-1-4);
\path[->,font=\scriptsize]
(m-2-1) edge node[auto] {} (m-2-2);
\path[->,font=\scriptsize]
(m-2-2) edge node[auto] {$d_1$} (m-2-3);
\path[->,font=\scriptsize]
(m-2-3) edge node[auto] {} (m-2-4);
\path[->, font=\scriptsize]
(m-1-2) edge node[auto] {$f_E$} (m-2-2);
\path[->, font=\scriptsize]
(m-1-3) edge node[auto] {$f_F$} (m-2-3);
\end{tikzpicture},
\end{equation}
then $\cone(f)$ is the complex
\[ 0 \to E_0 \stackrel{\binom{-d_0}{f_E}}{\to} F_0 \oplus E_1 \stackrel{(f_F, d_1)}{\to} F_1 \to 0 \]
from which it follows immediately that $\cone(f)$ is strictly exact if and only if the commutative square of \eqref{eq:pull_push_square} is both cartesian and cocartesian.
\end{proof}

One important property of $\LH(\bC)$, for our scopes, is the following.

\begin{prop} \label{prop:LH_embedding}
The category $\bC$ is a reflective subcategory of $\LH(\bC)$. The embedding functor $i: \bC \to \LH(\bC)$ sends an object of $\bC$ to a complex concentrated in degree $0$, and its adjoint sends an object $[ 0 \to E \to F \to 0]$ in $\LH(\bC)$  to the quotient $F/E$ in $\bC$ (it is easy to check that this quotient does not depend on the representative using Proposition \ref{prop:LH_localization}).
\end{prop}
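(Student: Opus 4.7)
The plan is to separately (a) verify full faithfulness of the inclusion $i : \bC \to \LH(\bC)$, (b) define the candidate adjoint $L$ on objects and morphisms, and (c) establish the Hom-adjunction $\Hom_{\bC}(L(M), X) \cong \Hom_{\LH(\bC)}(M, i(X))$. All three steps will be driven by the explicit model of $\LH(\bC)$ given in Proposition~\ref{prop:LH_localization}: objects are monomorphisms $[0 \to E \to F \to 0]$ (with $F$ in degree $0$), morphisms are commutative squares, and one inverts the squares that are simultaneously cartesian and cocartesian.

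For full faithfulness, any direct commutative square of the form
\[ \begin{tikzcd} 0 \ar[r] \ar[d] & X \ar[d,"f"] \\ 0 \ar[r] & Y \end{tikzcd} \]
is just a morphism $f : X \to Y$ in $\bC$. A general morphism $i(X) \to i(Y)$ in the localized category is represented by a zigzag $i(X) \xleftarrow{g} [0 \to E' \to F' \to 0] \xrightarrow{h} i(Y)$, with $g$ cart-cocart. The cart-cocart condition on $g$ forces $X \cong F'/E'$ and $E' \cong \ker(F' \to X)$, while $h$ is a map $F' \to Y$ killing $E'$. These data compose to a canonical morphism $X \cong F'/E' \to Y$ in $\bC$, which gives the bijection with $\Hom_{\bC}(X,Y)$.

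For the left adjoint, I define $L([0 \to E \to F \to 0]) := F/E = \coker(E \to F)$ in $\bC$ (which exists as $\bC$ is pre-abelian). Given a commutative square as in \eqref{eq:pull_push_square}, the composition $E_0 \to F_0 \to F_1 \to F_1/E_1$ vanishes since it factors through $E_0 \to E_1 \to F_1 \to F_1/E_1$, hence induces a canonical map $L(f) : F_0/E_0 \to F_1/E_1$ by the universal property of cokernels. The key check is that $L$ inverts cart-cocart squares: if the square is cocartesian, then $F_1$ is the pushout $F_0 \sqcup_{E_0} E_1$, and since pushouts of monomorphisms along the other leg commute with taking cokernels in any pre-abelian category, we get $\coker(E_1 \to F_1) \cong \coker(E_0 \to F_0)$, so $L(f)$ is an isomorphism. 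This in particular shows that $F/E$ depends only on the isomorphism class of the object in $\LH(\bC)$.

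For the adjunction, morphisms $M = [0 \to E \to F \to 0] \to i(X)$ in $\LH(\bC)$ are represented by commutative squares $E \to 0$, $F \to X$ with the composite $E \to F \to X$ zero; these correspond bijectively and naturally to maps $F/E = L(M) \to X$ in $\bC$ via the universal property of the cokernel. Any zigzag representative reduces to this shape after applying $L$ to the cart-cocart leg, by the invariance established in the previous step, so the assignment is well-defined on equivalence classes. Naturality in $M$ and $X$ is immediate, and the triangle identities reduce to the isomorphism $L(i(X)) = X/0 \cong X$. The main technical obstacle is the verification that $L$ descends to the localized category, i.e.\ that cocartesian squares preserve cokernels in the pre-abelian (rather than abelian) setting; once this lemma is in hand, the rest of the argument is formal manipulation of universal properties.
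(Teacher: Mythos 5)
Your argument is essentially correct, but it is worth noting that the paper itself does not prove this statement at all: it simply cites \cite[Corollary~1.2.20]{Sch}. What you have written is a self-contained verification of what the paper outsources, carried out entirely in the explicit localization model of Proposition~\ref{prop:LH_localization}. The one genuinely substantive point in your argument is the claim that the cokernel functor inverts cocartesian squares, and this is correct and purely formal: if $F_1 = E_1 \sqcup_{E_0} F_0$, then $\coker(E_1 \to F_1) = 0 \sqcup_{E_1} (E_1 \sqcup_{E_0} F_0) \cong 0 \sqcup_{E_0} F_0 = \coker(E_0 \to F_0)$ by composing pushout squares, so no monomorphism hypothesis is even needed. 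Likewise the identification $\Hom(F/E, X) \cong \{u : F \to X \mid u|_E = 0\}$ is exactly the universal property of the cokernel, and the triangle identities do reduce to $L(i(X)) \cong X$ as you say; note that once you have the adjunction together with $L \circ i \cong \mathrm{id}$, full faithfulness of $i$ follows formally from $\Hom_{\LH(\bC)}(i(X), i(Y)) \cong \Hom_{\bC}(L(i(X)), Y) \cong \Hom_{\bC}(X, Y)$, so your step (a) is redundant. The only place where you lean on an unproved fact is the assertion that every morphism in the localized category is represented by a single roof $M \xleftarrow{g} M' \xrightarrow{h} i(X)$ with $g$ cartesian--cocartesian: this requires the localizing class to admit a calculus of fractions (or, equivalently, the identification of $\LH(\bC)$ with a full subcategory of $D(\bC)$, where morphisms are roofs of quasi-isomorphisms). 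That fact is standard and is implicit in the paper's own statement of Proposition~\ref{prop:LH_localization}, but in a fully self-contained write-up you would either invoke it explicitly or handle arbitrary zigzags throughout, as you do only in the adjunction step.
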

\begin{proof}
\cite[Corollary 1.2.20]{Sch}.
\end{proof}

The left adjoint of the embedding functor $i: \bC \to \LH(\bC)$ is denoted by $c: \LH(\bC) \to \bC$ and is called the \emph{classical part functor}.

\begin{cor} \label{cor:LH_embedding}
The embedding functor $i: \bC \to \LH(\bC)$ preserves monomorphisms. 
\end{cor}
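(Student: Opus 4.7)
The plan is to deduce the statement directly from Proposition \ref{prop:LH_embedding}. That proposition asserts that $\bC$ is a reflective subcategory of $\LH(\bC)$, with reflector $c$; equivalently, $i: \bC \to \LH(\bC)$ is the right adjoint of $c$. Right adjoints preserve all small limits, and a monomorphism $f: X \to Y$ is characterized limit-theoretically by the property that the diagonal $X \to X \times_Y X$ is an isomorphism. Pullbacks and isomorphisms being preserved by right adjoints, it follows that $i$ carries monomorphisms of $\bC$ to monomorphisms of $\LH(\bC)$, and nothing more is needed.

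If one prefers a computational verification that does not pass through the abstract adjoint functor principle, one can instead use the explicit description of $\LH(\bC)$ provided by Proposition \ref{prop:LH_localization}. Given a monomorphism $f: X \to Y$ in $\bC$, the objects $i(X)$ and $i(Y)$ are represented by the complexes $[0 \to 0 \to X \to 0]$ and $[0 \to 0 \to Y \to 0]$ respectively. Applying the cone formula of Proposition \ref{prop:LH_localization} with $E_0 = E_1 = 0$, $F_0 = X$, $F_1 = Y$, and $f_F = f$ yields
\[ \cone(i(f)) \;\cong\; [\, 0 \to X \to Y \to 0 \,] \]
with $X$ in degree $-1$ and $Y$ in degree $0$. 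Because $f$ is a monomorphism in $\bC$, Proposition \ref{prop:LH_localization} identifies this two-term complex with an actual object of $\LH(\bC)$, so $\cone(i(f))$ is concentrated in degree $0$ of the left $t$-structure. Consequently the kernel of $i(f)$ computed in the abelian category $\LH(\bC)$ vanishes, which means that $i(f)$ is a monomorphism.

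The argument has essentially no moving parts, so there is no real obstacle to flag. The corollary is a clean consequence of Proposition \ref{prop:LH_embedding} via the general fact that right adjoints preserve limits, and the explicit cone computation is just a verification of the same phenomenon at the level of the complexes representing objects of $\LH(\bC)$.
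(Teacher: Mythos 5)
Your proof is correct, and the first argument (the embedding $i$ is a right adjoint by Proposition \ref{prop:LH_embedding}, and right adjoints preserve the limit-theoretic characterization of monomorphisms) is exactly the one-line deduction the paper intends, since the corollary is stated without proof immediately after the adjunction is established. Your second, explicit cone computation is a valid independent verification consistent with the criterion for monomorphisms in $\LH(\bC)$ that the paper itself uses in the proof of Proposition \ref{prop:trivial_der_functor}.
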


It is natural to ask what is the relation between the left t-structure and the notions of exactness introduced so far, \ie to find conditions that permits to check that a functor $F: \bC \to \bD$ ``derives trivially'' \ie it has $\LH^0(D(F)) \ne 0$ and $\LH^n(D(F)) = 0$ for all $n \ne 0$, where $D(F)$ denote the derived functor of $F$.

\begin{prop} \label{prop:trivial_der_functor}
Let $F: \bC \to \bD$ be a right exact functor between quasi-abelian categories and assume that $F$ is left derivable to a functor $\L F: D^-(\bC) \to D^-(\bD)$. Then, $\LH^n(\L F) = 0$ for all $n \ne 0$ if and only if $F$ is strictly left exact.
\end{prop}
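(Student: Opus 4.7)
Prove each direction separately using projective resolutions and the long exact sequence in $\LH$-cohomology associated to distinguished triangles in $D^{-}(\bC)$.

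$(\Leftarrow)$ Assume $F$ is strictly left exact; combined with right exactness this makes $F$ strictly exact. A direct check shows $F$ preserves strictly exact sequences: $F$ sends strict morphisms to strict morphisms, and one has $F(\ker d) = \ker F(d)$ together with the image identity $F(\im d) = F(\ker \coker d) = \ker \coker F(d) = \im F(d)$. Hence an $F$-projective resolution $P_\bullet \to X$ of any $X \in \bC$, being strictly exact in negative degrees, is sent by $F$ to a complex strictly exact in negative degrees, i.e.\ $\LH^n(F(P_\bullet)) = 0$ for $n<0$; right exactness gives $\LH^0(F(P_\bullet)) \cong F(X)$. Therefore $\L F(X) \cong F(X)[0]$ and the vanishing holds.

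$(\Rightarrow)$ Given $f : A \to B$ in $\bC$ with $K = \ker f$, factor $f = j \circ g$, where $g : A \twoheadrightarrow A/K = \coim f$ is the strict epimorphism with kernel $K$ and $j : A/K \hookrightarrow B$ is a (not necessarily strict) monomorphism. Apply $\L F$ to the distinguished triangle $K \to A \xrightarrow{g} A/K \to K[1]$ coming from the strict exact sequence $0 \to K \to A \xrightarrow{g} A/K \to 0$: the vanishing hypothesis on the three objects of $\bC$ collapses the LES in $\LH$-cohomology to a short exact sequence $0 \to i(F(K)) \to i(F(A)) \to i(F(A/K)) \to 0$ in $\LH(\bD)$. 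Full faithfulness of $i$ (Corollary~\ref{cor:LH_embedding}) descends this to a strict exact sequence in $\bD$, showing $F(K) = \ker F(g)$. To conclude $F(K) = \ker F(f)$ it then suffices to show $F(j)$ is monic, for in that case $\ker F(f) = \ker(F(j) F(g)) = \ker F(g) = F(K)$.

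To prove monicity of $F(j)$, consider $C := \cone(j) \in D^{-}(\bC)$. Since $\ker j = 0$, $\LH^{-1}(C) = 0$, so $C$ is quasi-isomorphic to a single object $M \in \LH(\bC)$. Applying $\L F$ to the triangle $A/K \to B \to C \to (A/K)[1]$ and using the hypothesis on $A/K, B \in \bC$, the LES yields $\LH^{-1}(\L F(C)) = i(\ker F(j))$. Thus $F(j)$ is monic in $\bD$ iff $\L F(C) \in \LH(\bD)$, i.e.\ iff $\L F$ is $t$-exact on $M$. This is obtained by a bootstrap: $\L F$ is automatically right $t$-exact as a left derived functor, and a devissage on the two-term complex presentation $[E \to F'] \cong M'$ of an arbitrary $M' \in \LH(\bC)$ reduces the vanishing on $\LH(\bC)$ to the vanishing on $\bC$, proving $\L F$ is $t$-exact and hence carries $\LH(\bC)$ into $\LH(\bD)$.

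\textbf{Main obstacle.} The technical heart is precisely this last bootstrap, i.e.\ promoting ``$\L F$ concentrated in degree $0$ on $\bC$'' to ``$\L F$ is $t$-exact''. This amounts to showing $F$ preserves \emph{all} monomorphisms (not merely strict ones), which is exactly what is needed to handle the non-strict mono $j : A/K \hookrightarrow B$. Once this is in hand, the rest of the argument is a routine manipulation of long exact sequences, and the equivalence falls out cleanly from the factorization $f = j\circ g$.
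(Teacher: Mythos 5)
Your $(\Leftarrow)$ direction is essentially fine (one terminological slip: strict left exactness together with right exactness is not the paper's ``strictly exact'', which would also require preserving cokernels of arbitrary morphisms; but what you actually use --- preservation of strict epimorphisms, of kernels of arbitrary morphisms, and of cokernels of strict morphisms --- does follow and suffices). The genuine gap is in the $(\Rightarrow)$ direction, at exactly the step you flag as the ``main obstacle'': the bootstrap is not a technicality, it is circular. For $M' \cong [E \to F'] \in \LH(\bC)$ the triangle $E \to F' \to M' \to E[1]$ gives, after applying $\L F$ and using the vanishing hypothesis on the objects $E, F' \in \bC$, an identification
\[ \LH^{-1}(\L F(M')) \cong \ker\big( \LH^0(\L F)(E) \to \LH^0(\L F)(F') \big) \cong i\big(\ker(F(E) \to F(F'))\big), \]
so the proposed devissage does \emph{not} reduce vanishing on $\LH(\bC)$ to vanishing on $\bC$: killing this term is equivalent to $F$ preserving the arbitrary (non-strict) monomorphism $E \to F'$, which --- by Proposition \ref{prop:right_exact_mono} and given what you already have --- is precisely the strict left exactness you are trying to establish. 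For your specific $M = [A/K \stackrel{j}{\to} B]$ this surviving term is exactly $i(\ker F(j))$, i.e. you are assuming the monicity of $F(j)$ in order to prove it. Since you explicitly defer this point rather than prove it, the forward implication is not established. (A secondary issue, shared with the paper's own level of gloss, is your repeated identification $\LH^0(\L F(X)) \cong i(F(X))$ for $X \in \bC$: for a merely right exact $F$ this amounts to strictness of $F$ applied to the top differential of a resolution and is not automatic; compare Corollary \ref{cor:acyclic_object}, where it is obtained from regularity of $F$.)

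The paper's proof avoids the need for any such bootstrap by never descending to degreewise statements on $\bC$ and climbing back up: it reads the hypothesis as saying that $\LH^0(\L F)\colon \LH(\bC) \to \LH(\bD)$ is an exact functor of abelian categories (right exactness being automatic for a left derived functor), and then invokes the explicit description of monomorphisms in the left heart from Proposition \ref{prop:LH_localization}: a morphism $[E_0 \to F_0] \to [E_1 \to F_1]$ is a monomorphism if and only if $E_0$ is the kernel of the \emph{arbitrary} morphism $F_0 \oplus E_1 \to F_1$. With that description, ``$\LH^0(\L F)$ preserves monomorphisms of $\LH(\bC)$'' is literally the same statement as ``$F$ preserves kernels of arbitrary morphisms of $\bC$'', i.e. strict left exactness, in both directions at once. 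To repair your argument you would either have to adopt that reading of the hypothesis and argue via the heart as the paper does, or supply an independent proof that the degreewise vanishing on $\bC$ forces $F$ to preserve all monomorphisms --- which is the missing content, not a routine manipulation.
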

\begin{proof}
The functor $\LH^0(\L F)$ is clearly right exact and to check that it is left exact it is enough to check that it sends monomorphisms (of $\LH(\bC)$) to monomorphisms (of $\LH(\bD)$). By Proposition \ref{prop:LH_localization} a morphism $f: E = [E_0 \to F_0] \to F = [E_1 \to F_1]$ in $\LH(\bC)$ is a monomorphism if and only if the sequence
\[ 0 \to E_0 \stackrel{\a}{\to} F_0 \oplus E_1 \stackrel{\be}{\to} F_1 \]
is strictly exact at $F_0 \oplus E_1$, \ie the morphism $\a$ is the kernel of $\be$ where $\be$ is an arbitrary morphism (\ie not necessarily strict). If $F$ is strictly left exact then it preserves the kernels of arbitrary morphisms and therefore
\[ 0 \to F(E_0) \stackrel{F(\a)}{\to} F(F_0 \oplus E_1) \stackrel{F(\be)}{\to} F(F_1) \]
is strictly exact proving that $F(f)$ is a monomorphism in $\LH(\bD)$. 

On the other hand it is easy to check that the same condition implies that the morphism $\be$ above can be chosen to be any morphism of $\bC$, proving that converse implication.
\end{proof}

\begin{cor} \label{cor:acyclic_object}
Let $F: \bC \to \bD$ be strictly exact functor. Then, $\LH^n(\L F) = 0$ for all $n \ne 0$ and $\LH^0(F)$ restricts to $F$ on $\bC$.
\end{cor}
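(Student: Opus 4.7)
The statement follows almost directly from Proposition \ref{prop:trivial_der_functor} together with the definition of strict exactness, so the plan is short.

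First, since $F$ is strictly exact, it is in particular right exact and strictly left exact. Therefore the hypotheses of Proposition \ref{prop:trivial_der_functor} are fulfilled, and the vanishing $\LH^n(\L F) = 0$ for $n \ne 0$ is immediate. The only subtlety is to check that $\L F$ is defined at all; this is where I would assume (as tacitly done in the setting fixed at the beginning of the section) that $\bC$ has enough projectives, so that any projective resolution is in particular $F$-projective once $F$ is strictly exact.

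For the second claim, that $\LH^0(\L F)$ restricts to $F$ on $\bC \hookrightarrow \LH(\bC)$, I would argue directly using the definition of the left derived functor via projective resolutions. Take $X \in \bC$ and a projective resolution $P_\bullet \to X$, which by definition is a strictly exact sequence. Because $F$ is strictly exact, applying it termwise produces a strictly exact sequence $F(P_\bullet) \to F(X)$, so $F(P_\bullet)$ is a resolution of $F(X)$ in $\bD$ in the strong sense that $F(P_\bullet) \cong F(X)$ in $D^{\le 0}(\bD)$. By the definition of $\L F$ via such resolutions we obtain $\L F(X) \cong F(X)$ in $D(\bD)$, and hence
\[ \LH^0(\L F)(X) \cong i(F(X)) \]
where $i: \bD \to \LH(\bD)$ is the embedding of Proposition \ref{prop:LH_embedding}.

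I expect no serious obstacle in writing this out: the only point requiring a little care is distinguishing the role played by $F$ being strictly right exact (needed to form $\L F$ from projective resolutions and to apply the vanishing of Proposition \ref{prop:trivial_der_functor}) from the role played by $F$ being strictly left exact (needed for the vanishing itself, again via Proposition \ref{prop:trivial_der_functor}). Everything else is a purely formal unpacking of definitions, and no new resolution-theoretic input beyond what has already been recalled in the section is required.
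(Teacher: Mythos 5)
Your argument is correct. For the vanishing of $\LH^n(\L F)$ for $n \ne 0$ you do exactly what the paper does: strict exactness gives in particular right exactness and strict left exactness, so Proposition \ref{prop:trivial_der_functor} applies (and, as you note, the standing assumption of the section that $\bC$ has enough projectives guarantees that $\L F$ exists, since the full subcategory of projectives is $F$-projective for any additive $F$). For the second claim, however, you take a genuinely different route. The paper disposes of it by citing Proposition 1.3.15 of Schneiders, on the grounds that a strictly exact $F$ is \emph{regular} (it preserves strict monomorphisms and strict epimorphisms). You instead give a direct resolution-theoretic argument: a strictly exact functor preserves kernels and cokernels of arbitrary morphisms, hence preserves images, coimages, strictness of morphisms, and therefore strict exactness of complexes; applied to a projective resolution $P_\bullet \to X$ this shows $F(P_\bullet)$ is a resolution of $F(X)$, so every object of $\bC$ is $F$-acyclic and $\L F(X) \cong F(X)$ in $D(\bD)$, whence $\LH^0(\L F)(i(X)) \cong i(F(X))$. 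Your version is more self-contained --- it avoids importing the notion of regular functor and the external reference --- at the cost of a few lines verifying that strict exactness of the resolution is preserved (which you correctly identify as the only point needing care); the paper's version is shorter but leans entirely on the citation. Both are valid under the hypothesis of the corollary.
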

\begin{proof}
We only have to prove the claim about the restriction of $\LH^0(\L F)$ to $F$, which follows from Proposition 1.3.15 of \cite{Sch} as $F$ is regular because it maps strict monomorphisms to strict monomorphisms and strict epimorphisms to strict epimorphisms.
\end{proof}

We remark that it is not enough that $F$ is exact for having $\LH^n(\L F) = 0$ for all $n \ne 0$, in contrast with the case of additive functors between abelian categories. Again, an example of an exact functor for which the higher derived functors do not vanish that is relevant for this work is the completion functor $\what{(\minus)}: \bNr_R \to \bBan_R$.

We now consider $\bC$ to be closed symmetric monoidal.

\begin{prop} \label{prop:LH_closed_monoidal}
Let $(\bC, \ootimes)$ be a closed symmetric monoidal quasi-abelian category with enough projectives. Suppose also that for all projective objects $P, Q \in \bC$ one has that $P \ootimes Q$ is projective, then $\LH(\bC)$ is a closed symmetric monoidal abelian category equipped with the functors $\LH^0( (\minus) \ootimes^\L (\minus) )$ and $\LH^0(\R \ul{\Hom}(\minus, \minus))$.
\end{prop}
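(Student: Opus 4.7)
The plan is to transport the closed symmetric monoidal structure from the derived category $D(\bC)$ to its heart $\LH(\bC)$ via truncation, using the derived equivalence $D(\bC) \cong D(\LH(\bC))$ and the hypothesis that projective objects are stable under $\ootimes$.

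First I would set up the derived bifunctors. Since $\bC$ has enough projectives and $P \ootimes Q$ is projective whenever $P,Q$ are projective, the full subcategory of projective objects is $\ootimes$-projective in the sense recalled before Proposition \ref{prop:LH_closed_monoidal}, so the left derived tensor product $(\minus) \ootimes^\L (\minus) : D^{-}(\bC) \times D^{-}(\bC) \to D^{-}(\bC)$ and the derived internal hom $\R\ul{\Hom}(\minus,\minus)$ exist, and the classical adjunction $\R\Hom(X \ootimes^\L Y, Z) \cong \R\Hom(X, \R\ul{\Hom}(Y,Z))$ holds in $D(\bC)$, together with associativity, symmetry and unitality constraints up to coherent isomorphism.

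Next I would check compatibility with the left $t$-structure. Given $X, Y \in \LH(\bC)$, pick projective resolutions $P^{\bullet} \to X$ and $Q^{\bullet} \to Y$ concentrated in non-positive degrees; then $P^{\bullet} \ootimes Q^{\bullet}$ is a complex of projectives sitting in non-positive degrees, so $X \ootimes^\L Y \in D^{\le 0}(\bC)$. Dually, $\R\ul{\Hom}(Y,Z) = \ul{\Hom}(P^{\bullet}, Z)$, and the adjunction between $\ootimes$ and $\ul{\Hom}$ in $\bC$ combined with the description of the left $t$-structure forces this to lie in $D^{\ge 0}(\bC)$ whenever $Y, Z \in \LH(\bC)$. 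I would thus \emph{define}
\[ X \otimes_{\LH} Y := \LH^0(X \ootimes^\L Y), \qquad \ul{\Hom}_{\LH}(Y,Z) := \LH^0(\R\ul{\Hom}(Y,Z)), \]
both landing in $\LH(\bC)$.

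The adjunction in $\LH(\bC)$ then falls out of the derived adjunction by truncation. For $X,Y,Z \in \LH(\bC)$, the $t$-structure gives $\Hom_{\LH(\bC)}(\LH^0(A),Z) \cong \Hom_{D(\bC)}(A,Z)$ for any $A \in D^{\le 0}(\bC)$, and $\Hom_{\LH(\bC)}(X,\LH^0(B)) \cong \Hom_{D(\bC)}(X,B)$ for any $B \in D^{\ge 0}(\bC)$; splicing these with the derived adjunction yields
\[ \Hom_{\LH(\bC)}(X \otimes_{\LH} Y, Z) \cong \Hom_{\LH(\bC)}(X, \ul{\Hom}_{\LH}(Y,Z)). \]
Associativity, symmetry and the unit axiom are inherited from $D(\bC)$ by applying $\LH^0$ to the corresponding coherent isomorphisms, using the fact that the tensor product of two objects of $D^{\le 0}$ stays in $D^{\le 0}$ so the truncations commute with the associativity/symmetry constraints.

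The main obstacle I anticipate is the $t$-structure estimate for the derived internal hom, i.e.\ verifying that $\R\ul{\Hom}(Y,Z) \in D^{\ge 0}(\bC)$ for $Y,Z \in \LH(\bC)$. The tensor side is straightforward from the projective resolution argument, but for internal hom one must argue via the derived adjunction with tensor (testing against objects of $D^{<0}(\bC)$) rather than by a direct resolution computation, since $\bC$ is only assumed to have enough projectives, not enough injectives. Once this positivity is secured, the rest of the proof is formal bookkeeping with truncation functors.
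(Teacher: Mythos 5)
The paper gives no argument here: its ``proof'' is the single line ``This is Proposition 1.5.3 and Corollary 1.5.4 of [Sch].'' What you have written is essentially a reconstruction of the proof of those two results of Schneiders, and the architecture is the right one: derive $\ootimes$ and $\ul{\Hom}$ using complexes of projectives (the hypothesis that $P\ootimes Q$ is projective is what keeps iterated derived tensor products computable on complexes of projectives, hence gives associativity), establish the $t$-exactness estimates $D^{\le 0}\ootimes^{\L}D^{\le 0}\subseteq D^{\le 0}$ and $\R\ul{\Hom}(D^{\le 0},D^{\ge 0})\subseteq D^{\ge 0}$, and truncate. Your identification of the internal-hom positivity as the delicate step, and your proposal to prove it by testing against $D^{\le -1}$ via the derived adjunction rather than by a resolution of the second variable, is exactly right given that only enough projectives are assumed. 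The splicing of the $t$-structure adjunctions to get the closed structure on the heart, and the observation that right $t$-exactness of $\ootimes^{\L}$ makes $\LH^0$ compatible with the associativity and symmetry constraints, are also correct.

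The one thin spot is the unit. You assert that the ``unitality constraints'' hold in $D(\bC)$, but the isomorphism $\mathbbm{1}\ootimes^{\L}X\simeq X$ does not follow from ``tensor of projectives is projective'' alone: computing $\mathbbm{1}\ootimes^{\L}X$ as $P^{\bullet}_{\mathbbm{1}}\ootimes Q^{\bullet}$, the cone of the comparison map to $Q^{\bullet}$ is $\wtilde{C}^{\bullet}\ootimes Q^{\bullet}$ with $\wtilde{C}^{\bullet}=\cone(P^{\bullet}_{\mathbbm{1}}\to\mathbbm{1})$ acyclic but \emph{not} a complex of projectives, so the contractibility argument that handles independence of resolutions does not apply. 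One needs an extra input: either $\mathbbm{1}$ is projective (in which case one takes $P^{\bullet}_{\mathbbm{1}}=\mathbbm{1}$ and the point is trivial), or projectives are flat/$\ootimes$-acyclic, as the paper verifies separately for its categories of interest in Proposition \ref{prop:projectives_are_flat}. In all the categories used in this paper ($\bBan_R$, $\bBan_R^{\le 1}$, $\bBorn_R$) the unit is projective, so this costs nothing in practice, but you should add a sentence to this effect rather than fold unitality into the coherence data inherited from $D(\bC)$.
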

\begin{proof}
This is Proposition 1.5.3 and Corollary 1.5.4 \cite{Sch}.
\end{proof}

Proposition \ref{prop:LH_closed_monoidal} has the following important consequence (we refer to the beginning of the next sub-section for the definition of the $\infty$-categories associated to $\bC$, $\LH(\bC)$ and their categories of monoids).

\begin{cor} \label{cor:LH_closed_monoidal}
The symmetric monoidal $\infty$-categories $\infty \minus \LH(\bC)$ and $\infty \minus \bC$ are monoidally Quillen equivalent. Hence, the $\infty$-categories $\infty \minus \bComm(\LH(\bC))$ and $\infty \minus \bComm(\bC)$ are Quillen equivalent.
\end{cor}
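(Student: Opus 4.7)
The plan is to exhibit a monoidal Quillen equivalence between the model categories of simplicial objects $\bSimp(\bC)$ and $\bSimp(\LH(\bC))$, and then invoke a general transfer result to deduce the equivalence on commutative monoids. The starting point is the adjunction $c \dashv i$ of Proposition \ref{prop:LH_embedding}, which extends levelwise to an adjunction $\bSimp(c) \dashv \bSimp(i)$ between the simplicial categories. By Theorem 3.7 of \cite{BaBeKr2}, both $\bSimp(\bC)$ and $\bSimp(\LH(\bC))$ carry symmetric monoidal combinatorial model structures whose homotopy categories are respectively $D^{\le 0}(\bC)$ and $D^{\le 0}(\LH(\bC))$, and I would first verify that $(\bSimp(c),\bSimp(i))$ is a Quillen pair by checking that the right adjoint $\bSimp(i)$ preserves fibrations and trivial fibrations; this follows because $i: \bC \to \LH(\bC)$ is exact (it preserves monomorphisms by Corollary \ref{cor:LH_embedding} and is obviously right exact as a right adjoint), so it detects and preserves the quasi-isomorphisms and strict epimorphisms used to define the model structure levelwise.

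Next, to upgrade this Quillen adjunction to a Quillen equivalence, the essential input is the classical derived equivalence $D(\bC) \simeq D(\LH(\bC))$ from \cite[Proposition 1.2.31]{Sch}, which restricts to the bounded-above parts. The derived functor of $c$ on $D^{\le 0}$ agrees with this equivalence (it is effectively the identification of $\LH(\bC)$ as a reflective subcategory of $D(\bC)$ that already contains $\bC$), so the derived unit and counit of $\bSimp(c) \dashv \bSimp(i)$ are weak equivalences, which is exactly the condition for a Quillen equivalence.

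For the monoidal structure, the key observation is that by Proposition \ref{prop:LH_closed_monoidal} the tensor product on $\LH(\bC)$ is defined as $\LH^{0}$ of the derived tensor product inherited from $\bC$, and the projectivity hypothesis there ensures that products of projectives are projective. Consequently, on cofibrant simplicial objects the left adjoint $\bSimp(c)$ is strong monoidal up to weak equivalence, i.e.\ the natural map $\bSimp(c)(X) \ootimes^{\LH} \bSimp(c)(Y) \to \bSimp(c)(X \ootimes Y)$ is a weak equivalence whenever $X,Y$ are cofibrant. This, together with the fact that the monoidal unit is preserved, makes $(\bSimp(c), \bSimp(i))$ a weak monoidal Quillen equivalence in the sense of \cite{BaBeKr2}.

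Finally, one transfers the equivalence to commutative monoids. The main obstacle — and where I would be most careful — is this transfer step, since lifting a monoidal Quillen equivalence to commutative algebras requires that the free commutative monoid functor is homotopically well-behaved. Here one invokes the framework of \cite{BaBeKr2}: the model structure on $\bSimp(\bC)$ is shown there to satisfy the monoid axiom and to admit a transferred model structure on $\bComm(\bSimp(\bC))$, compatibly with the analogous structure on $\bComm(\bSimp(\LH(\bC)))$. Since $\bSimp(c)$ is strong symmetric monoidal on cofibrant objects and $\bSimp(i)$ is lax symmetric monoidal, the adjunction lifts to commutative monoids, and the underlying weak equivalences of commutative monoids are detected in the underlying simplicial categories, yielding the desired Quillen equivalence $\infty\text{-}\bComm(\LH(\bC)) \simeq \infty\text{-}\bComm(\bC)$.
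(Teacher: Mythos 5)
Your argument is essentially the paper's own (very terse) proof, expanded: the paper likewise deduces the monoidal Quillen equivalence from the derived equivalence $D^{\le 0}(\bC)\cong D^{\le 0}(\LH(\bC))$ together with Proposition \ref{prop:LH_closed_monoidal}, and treats the passage to $\bComm$ as a formal consequence of the (weak) monoidal Quillen equivalence, exactly as you do via the adjunction $c\dashv i$ and the agreement of $\ootimes$ and $\ootimes^{\LH}$ on levelwise projectives. One small slip: $i$, being a \emph{right} adjoint, is left exact rather than ``obviously right exact''; what you actually need --- that $i$ is exact and preserves the fibrations and weak equivalences --- follows from Schneiders' result that the embedding $\bC\to\LH(\bC)$ is exact and induces the derived equivalence, so the conclusion stands.
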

\begin{proof}
The first assertion follows immediately from Proposition \ref{prop:LH_closed_monoidal} as $\infty \minus \LH(\bC)$ and $\infty \minus \bC$ are Quillen equivalent and the proposition asserts that the equivalence preserves tensor products and closed structures. The assertion about the categories of commutative algebras is then a formal consequences of the monoidal Quillen equivalence.
\end{proof}

Besides this $\infty$-categorical picture the categories $\bComm(\bC)$ and $\bComm(\LH(\bC))$ are not equivalent. Worse than that, there seem to be no reason for which in general the adjunction $i: \bC \rightleftarrows \LH(\bC): c$ must be a monoidal adjunction without further assumptions on $\bC$.

\begin{prop} \label{prop:lax_monoidal}
Suppose that $\bC$ has enough $\otimes$-acyclic objects, then in the adjunction of Proposition \ref{prop:LH_embedding}
\begin{equation} \label{eq:oplax_adjunction}
i: \bC \rightleftarrows \LH(\bC): c
\end{equation} 
$i$ is a lax monoidal functor and $c$ is a strong monoidal functor. 
\end{prop}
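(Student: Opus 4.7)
I will construct the lax monoidal structure on $i$ via a canonical truncation morphism in $D(\bC)$, then verify the strong monoidal property of $c$ first on pairs of objects in $i(\bC)$ and extend to all of $\LH(\bC)$ by right-exactness. The hypothesis of enough $\ootimes$-acyclic objects enters in providing explicit representatives for $\ootimes^\L$ that allow the concrete comparison with $\ootimes$ in $\bC$.

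\textbf{Lax structure on $i$.} For $A, B \in \bC$ fix $\ootimes$-acyclic resolutions $P^\bullet \to A$ and $Q^\bullet \to B$ in $\bC$. By Proposition \ref{prop:LH_closed_monoidal} the tensor $i(A) \ootimes i(B)$ in $\LH(\bC)$ is $\LH^0(A \ootimes^\L B)$, and the latter is represented by $\Tot(P^\bullet \ootimes Q^\bullet) \in D^{\le 0}(\bC)$. The augmentations combine into a canonical morphism of complexes $\Tot(P^\bullet \ootimes Q^\bullet) \to (A \ootimes B)[0]$ in $D(\bC)$; applying $\LH^0$ (together with the identity $\LH^0(i(A \ootimes B)) = i(A \ootimes B)$ since the right-hand side already lies in the heart) produces the required natural transformation $\mu_{A, B}: i(A) \ootimes i(B) \to i(A \ootimes B)$. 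Coherence for $\mu$ (associator, unitor, symmetry) is inherited from the coherence of $\ootimes^\L$ on $D(\bC)$.

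\textbf{Strong structure on $c$.} Combining the lax $\mu$ with the adjunction $c \dashv i$, the doctrinal adjunction principle produces a canonical oplax transformation $\nu_{X, Y}: c(X \ootimes Y) \to c(X) \ootimes c(Y)$, and the claim reduces to showing $\nu$ is invertible. The key input is the identity $c \circ \LH^0 \cong \coker(d^{-1})$ on complexes $K^\bullet$ of $\bC$-objects concentrated in non-positive degrees: indeed $c \circ \LH^0$ is the composition of the two left adjoints $\LH^0 \dashv (\LH(\bC) \hookrightarrow D^{\le 0}(\bC))$ and $c \dashv i$, hence is itself left adjoint to the inclusion $\bC \hookrightarrow D^{\le 0}(\bC)$; a direct Yoneda computation of $\Hom_{D^{\le 0}(\bC)}(K^\bullet, A[0])$ then identifies this left adjoint with $\coker(d^{-1})$. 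Substituting $K^\bullet = \Tot(P^\bullet \ootimes Q^\bullet)$, the cokernel $\coker(P^{-1} \ootimes Q^0 \oplus P^0 \ootimes Q^{-1} \to P^0 \ootimes Q^0)$ equals $A \ootimes B$ by right-exactness of $\ootimes$ in each variable (applied to the strictly exact augmentations $P^{-1} \to P^0 \to A \to 0$ and $Q^{-1} \to Q^0 \to B \to 0$). Hence $\nu_{i(A), i(B)}$ is an isomorphism. For arbitrary $X, Y \in \LH(\bC)$, every such object admits a presentation $i(P_1) \to i(P_0) \twoheadrightarrow X$ with $P_0, P_1$ projective in $\bC$ (such objects remain projective in $\LH(\bC)$), and since the tensor products in both categories preserve cokernels in each variable and $c$ preserves all colimits as a left adjoint, a routine diagram chase extends $\nu$ to an isomorphism on all of $\LH(\bC)$.

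\textbf{Main obstacle.} The principal technical step is the abstract identification $c \circ \LH^0 \cong \coker(d^{-1})$, and it is precisely here that the assumption of enough $\ootimes$-acyclics is essential: it allows $A \ootimes^\L B$ to be represented by a bounded-above complex of $\bC$-objects so that the universal property of $\coker(d^{-1})$ is applicable. Without this hypothesis one would be forced to work with simplicial resolutions in $\bSimp(\bC)$ and compare the simplicial tensor product with the ordinary one, a considerably more delicate task.
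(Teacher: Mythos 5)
Your proof is correct, and its computational core is the same as the paper's: represent the derived tensor product by a complex of $\ootimes$-acyclic objects, identify $c\circ\LH^0$ of such a complex with a cokernel taken in $\bC$, and invoke right-exactness of $\ootimes$ (the ``tensor product of presentations'' lemma) to recognize that cokernel as the tensor product of the classical parts. The organization, however, runs in the opposite direction. The paper proves directly that $c$ is strong monoidal — working with an arbitrary object of $\LH(\bC)$ written as $[E \to P]$ with $P$ acyclic and computing $\LH^0(X \ootimes^\L X')$ as a cokernel — and then obtains the lax structure on $i$ for free from doctrinal adjunction. You instead build the lax structure $\mu$ on $i$ explicitly from the augmentations, pass to the mate $\nu$ on $c$, verify $\nu$ is invertible on pairs from $i(\bC)$, and bootstrap to all of $\LH(\bC)$ via projective presentations and right-exactness of both tensor products. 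The two routes are equivalent; yours isolates the abstract identification $c\circ\LH^0 \cong \coker(d^{-1})$ (by uniqueness of left adjoints) as a reusable lemma and makes the canonicity of the comparison map more transparent, at the cost of the extra two-variable extension step, while the paper's one-shot computation on representatives $[E\to P]$ is shorter but leaves the verification that its isomorphism agrees with the structural map implicit.
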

\begin{proof}
Since $c$ is left adjoint it is enough to show that $c$ is strongly monoidal as then $i$ is automatically lax monoidal.
Since $\bC$ has enough $\otimes$-acyclic objects and the classes of projectives of $\bC$ and $\LH(\bC)$ agree, then every object of $\LH(\bC)$ can be written as a complex of the form
\[ [E \to P] \]
where $P$ is a $\otimes$-acyclic object. Consider two such objects $X = [E \to P]$ and $X' = [E' \to P']$ then
\[ X \ootimes^\L X' \cong \Tot([P^\bullet \to P] \ootimes^\LH [E' \to P']) \]
where $P^\bullet$ is a $\otimes$-acyclic resolution of $E$ and where we denoted by $\ootimes^\LH$ the monoidal structure on $\LH(\bC)$. So
\[ X \ootimes^\L X' \cong \Tot([ P^\bullet \ootimes E' \to E' \ootimes P \oplus P^\bullet \ootimes P' \to P \ootimes P' ]) \]
but as $P^\bullet \cong E$ then
\[ X \ootimes^\L X' \cong [P^\bullet \ootimes E' \to E' \ootimes P \oplus E \ootimes P' \to P' \ootimes P]. \]
because $P$ and $P'$ are $\ootimes$-acyclic. 
We get that
\[ X \ootimes^\LH X' = \LH^0(X \ootimes^\L X') \cong \coker(E' \ootimes P \oplus E \ootimes P' \to P' \ootimes P) \]
where the cokernel is computed in $\LH(\bC)$.
Now, as 
\[ c([E \to P]) = \frac{P}{E}, \ \ c([E' \to P']) = \frac{P'}{E'}, \]
where now the quotients are computed in $\bC$, we have that
\[ c([E \to P] \ootimes^\LH [E' \to P']) \cong c(\coker(E' \ootimes P \oplus E \ootimes P' \to P' \ootimes P)) \cong \coker(E' \ootimes P \oplus E \ootimes P' \to P' \ootimes P) \cong \]
\[ \cong \frac{P}{E} \ootimes \frac{P'}{E'} \cong  c([E \to P]) \ootimes c([E' \to P']) \]
because $c$ is left adjoint and $\ootimes$ commutes with quotients, proving that $c$ is strongly monoidal.
\end{proof}

\subsection{The homotopy Zariski topology}

Once we fix a quasi-abelian category $\bC$ as above, \ie with enough projectives and with a closed symmetric monoidal structure, we can make sense of what derived algebraic geometry relative to $\bC$ is by considering the opposite of the category of simplicial commutative algebras over $\bC$ as the category of derived affine schemes (relative to $\bC$) and put on it suitable model Grothendieck topologies. We will use the language of $\infty$-categories in order to describe this theory in the most straightforward way. Hence, as the category of simplicial objects $\bSimp(\bC)$ has a nice model structure, as mentioned so far, it can be enhanced to an $\infty$-category that we denote $\infty\minus\bC$. By, an easy adaptation of, Corollary 3.15 of \cite{BaBeKr2} the category $\bComm(\bSimp(\bC))$ has also a structure of combinatorial model category and therefore it can be enhanced to an $\infty$-category that we call $\infty\minus\bComm(\bC)$. So, for any object $A \in \infty\minus\bComm(\bC)$ it makes sense to consider the symmetric monoidal $\infty$-category of $A$-modules, denoted by $(\infty\minus\bMod_A, \ootimes_A)$ whose homotopy category is denoted by $(\bMod_A, \ootimes_A^\L)$.

\begin{defn} \label{defn:derived_affine_scheme}
The $\infty$-category $(\infty\minus\bComm(\bC))^\op$ is called the \emph{category of affine $\infty$-schemes} over $\bC$ and denoted $\infty\minus\bAff(\bC)$. Its homotopy category is called \emph{category of affine derived schemes} and denoted $\bAff(\bC)$.
\end{defn}

If $A \in \infty\minus\bComm(\bC)$ we denote the corresponding object of $(\infty\minus\bComm(\bC))^\op$ by $\Spec(A)$. The same notation will be used for derived affine schemes, \ie for objects of the homotopy category.

The last ingredient we need for having a geometry is a topology on $(\infty\minus\bComm(\bC))^\op$ that prescribes how to glue the affine objects. In this work we only focus on the simplest example of such topologies because it is the only relevant one for our scopes, but other topologies like the \'etale, flat or Nisnevich topologies can be considered (as well as many others).

\begin{defn} \label{defn:homotopy_zariski_topology}
A morphism $\Spec(A) \to \Spec(B)$ in $\infty\minus\bAff(\bC)$ is called \emph{(formal) homotopy Zariski open immersion\footnote{We usually omit the word ``formal" as in this work only formal homotopy Zariski open immersions are considered.}} if the induced morphism
\[ A \ootimes_B^\L A \to A \]
is an equivalence in $\infty\minus\bMod_A$ (\ie an isomorphism in the homotopy category). A family $\{ \Spec(A_i) \to \Spec(B) \}_{i \in I}$ of homotopy Zariski open immersions is a \emph{cover} if there exists a finite subfamily $J \subset I$ such that the pullback functors
\[ \{ \bMod_B \to \bMod_{A_i} \}_{i \in J} \]
form a conservative family of functors.
\end{defn}

Notion of (formal) homotopy Zariski open immersion of Definition \ref{defn:homotopy_zariski_topology} is an analogue of the notion of formal Zariski open immersion of algebraic geometry and hence it does not require any finite presentation of the morphism. This creates the issue that the family of homotopy Zariski open immersions is huge and difficult to describe. The condition of being a cover for the homotopy Zariski topology can be reformulated as follows.

\begin{thm} \label{thm:derived_Tate} [Derived Tate's acyclicity]
Let $\{ \Spec(A_i) \to \Spec(B) \}_{i \in I}$ be a finite family of homotopy Zariski open embeddings, then the family is a cover if and only if the associated Tate-\u{C}ech complex 
\begin{equation} \label{eq:derived_tate_cech}
 \Tot( 0 \to B \to \prod_{i \in I} A_i \to \prod_{i,j \in I} A_i \ootimes_B^\L A_j \to \cdots  ).
\end{equation}
is strictly acyclic.
\end{thm}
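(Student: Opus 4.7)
Both directions rest on a standard descent argument, which must be carried out carefully in the quasi-abelian/simplicial framework. Throughout, let $C^\bullet$ denote the augmented Tate-\u{C}ech total complex
\[ 0 \to B \to \prod_{i \in I} A_i \to \prod_{i,j \in I} A_i \ootimes_B^\L A_j \to \cdots, \]
which arises as $\Tot$ of the cosimplicial $B$-algebra $A^\bullet$ with $A^n = \prod_{i_0,\dots,i_n} A_{i_0} \ootimes_B^\L \cdots \ootimes_B^\L A_{i_n}$ (augmented by the unit maps). Strict acyclicity of $C^\bullet$ is equivalent to $C^\bullet \simeq 0$ in $D(\bMod_B)$, i.e.\ $\LH^n(C^\bullet)=0$ for all $n$.

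\textbf{Forward direction.} Assume the family $\{(-)\ootimes_B^\L A_j\}_{j\in I}$ is jointly conservative. Fix $j \in I$ and apply $(-)\ootimes_B^\L A_j$ to the whole complex. Because $\ootimes_B^\L$ is exact and finite products in $\bMod_B$ coincide with finite coproducts (biproducts in the additive category), this operation commutes term-by-term with $\Tot$, and the result is the augmented Čech complex for the induced family $\{A_i \ootimes_B^\L A_j \to A_j\}_{i \in I}$ over $A_j$. The hypothesis that $\Spec(A_j)\to\Spec(B)$ is a homotopy Zariski open immersion means $A_j \ootimes_B^\L A_j \simeq A_j$; the unit $A_j \to A_j \ootimes_B^\L A_j$ is therefore an equivalence and splits multiplication. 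Projection onto the $j$-th factor followed by this splitting provides an extra codegeneracy on the augmented cosimplicial object $(A^\bullet \ootimes_B^\L A_j \to A_j)$. By the classical extra-degeneracy lemma, this cosimplicial object is contractible, so its total complex is strictly acyclic. Hence $C^\bullet \ootimes_B^\L A_j \simeq 0$ in $D(\bMod_{A_j})$ for every $j \in I$, and conservativity of the pullback family forces $C^\bullet \simeq 0$ in $D(\bMod_B)$.

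\textbf{Backward direction.} Assume $C^\bullet$ is strictly acyclic. Let $M \in \bMod_B$ with $M \ootimes_B^\L A_j \simeq 0$ for every $j \in I$; we want $M \simeq 0$. Since $(-)\ootimes_B^\L M$ preserves zero objects in the derived category, $M \ootimes_B^\L C^\bullet \simeq 0$. Using the distributivity of $\ootimes_B^\L$ over finite biproducts and associativity of $\ootimes_B^\L$, each term of positive cosimplicial degree decomposes as
\[ M \ootimes_B^\L A_{i_0} \ootimes_B^\L \cdots \ootimes_B^\L A_{i_n} \simeq (M \ootimes_B^\L A_{i_0}) \ootimes_{A_{i_0}}^\L (A_{i_0} \ootimes_B^\L A_{i_1} \ootimes_B^\L \cdots) \simeq 0, \]
because its leftmost factor $M \ootimes_B^\L A_{i_0}$ vanishes. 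Only the augmentation term $M \ootimes_B^\L B \simeq M$ survives, so $M \simeq 0$ in $D(\bMod_B)$, proving conservativity.

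\textbf{Main obstacle.} The delicate point is the extra-codegeneracy step in the forward direction: one must verify that the splitting $A_j \to A_j \ootimes_B^\L A_j$ assembles into a coherent simplicial homotopy at the $\infty$-categorical level in $\bSimp(\bMod_B)$, and that the resulting contractibility of the cosimplicial object indeed translates into strict acyclicity of $\Tot$ in the left t-structure on $D(\bMod_B)$, rather than some weaker form of acyclicity. Everything else is bookkeeping with biproducts and associativity of derived tensor products.
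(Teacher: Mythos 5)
Your second direction (strict acyclicity implies the family is a cover) is in substance the paper's own argument: the paper likewise tensors the strictly exact augmented complex against modules and observes that everything except the augmentation term is controlled by the localizations. The only real difference is that you verify conservativity as zero-detection on objects, whereas Definition \ref{defn:homotopy_zariski_topology} is stated for morphisms; this is harmless, but you should add the one-line reduction through the cofibre of $M \to M'$, legitimate because $(\minus)\ootimes_B^\L A_j$ preserves cofibre sequences. Your first direction, by contrast, takes a genuinely different route: the paper does not argue via extra codegeneracies at all, but sketches a comonadic (Barr--Beck) descent argument --- each $\bMod_{A_i}$ is a full reflective subcategory of $\bMod_B$, conservativity forces these subcategories to generate $\bMod_B$, hence $(\minus)\ootimes_B^\L\prod_i A_i$ is comonadic --- deferring the details to \cite{BeKr2}. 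Your split-cosimplicial argument is more elementary and self-contained; note in passing that the splitting itself does not actually use the homotopy-epimorphism hypothesis, since the multiplication $A_j\ootimes_B^\L A_j\to A_j$ (which is what the inverse of the unit is, when the unit is invertible) already furnishes the extra codegeneracy, exactly as in classical faithfully flat descent. So in your write-up the openness hypothesis is used only where it is not needed, which is a sign that the real work is happening elsewhere.

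That elsewhere is the step you call bookkeeping: the interchange of $(\minus)\ootimes_B^\L A_j$ with $\Tot$. Commuting with the finite products in each cosimplicial degree is fine, but $\Tot$ is an infinite limit (the conormalized \u{C}ech complex is unbounded even for finite $I$), and an exact functor need not commute with it. Splitness computes $\Tot(C^\bullet\ootimes_B^\L A_j)$, while conservativity needs a statement about $\Tot(C^\bullet)\ootimes_B^\L A_j$; the same interchange is implicit in your second direction when you evaluate $M\ootimes_B^\L C^\bullet$ termwise. You do flag something close to this in your closing paragraph, and in fairness the paper performs the identical interchange without comment in its own half of the proof, so this is shared technical debt rather than a wrong idea; but your stated justification (exactness plus finite biproducts) does not cover it, and if it were automatic one would obtain \u{C}ech descent from conservativity of an arbitrary finite family, which is not to be expected. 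This is precisely where the hypothesis that the maps are homotopy Zariski open immersions should enter your argument, for instance by using $A_i\ootimes_B^\L A_i\simeq A_i$ to collapse repeated indices and compare the full cosimplicial object with a bounded alternating \u{C}ech complex, for which the interchange is unproblematic.
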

\begin{proof}
If the complex of \eqref{eq:derived_tate_cech} is strictly exact and $M \to M'$ is a morphism in $\infty\minus\bMod_B$ such that $M \ootimes_B^\L A_i \to M' \ootimes_B^\L A_i$ is an equivalence for all $i$, then
\[ M \cong M \ootimes_B^\L \Tot( \prod_{i \in I} A_i \to \prod_{i,j \in I} A_i \ootimes_B^\L A_j \to \cdots ) \cong \Tot( \prod_{i \in I}  M \ootimes_B^\L A_i \to \prod_{i,j \in I} M \ootimes_B^\L  A_i \ootimes_B^\L A_j \to \cdots ) \cong  \]
\[ \cong \Tot( \prod_{i \in I}  M' \ootimes_B^\L A_i \to \prod_{i,j \in I} M' \ootimes_B^\L  A_i \ootimes_B^\L A_j \to \cdots ) \cong M'.  \]
For the converse implication we just sketch the proof.
We notice that the condition of $B \to A_i$ being a homotopy Zariski localization implies that the category $\bMod_{A_i}$ is a full reflective subcategory of $\bMod_B$. The conservativity condition on the cover implies that the subcategory generated by the $\bMod_{A_i}$ in $\bMod_B$ is the whole $\bMod_B$, using the correct notion of ``subcategory generated" by a family of full subcategories of $\bMod_B$. From this observation one can deduce that the functor $(\minus) \ootimes_B^\L \prod_i A_i$ is comonadic and then deduce the strict exactness of the derived Tate-\u{C}ech complex. Full details of this proof are given in \cite{BeKr2}.
\end{proof}

Thus to any $A \in \infty\minus\bComm(\bC)$ we can associate the $\infty$-site $\Zar_A$ defined as the class of all homotopy Zariski open immersions in $\infty\minus\bAff_A$ equipped with covers given as in Definition \ref{defn:homotopy_zariski_topology}. %The main drawback of this construction is that $\Zar_A$ is a big site, \ie the class of objects do not form a set.
We also recall the following result.

\begin{prop} \label{prop:tensor_pushouts} 
Let $(\bC, \ootimes)$ be a finitely cocomplete closed symmetric monoidal category, then the pushout of $A \to B$ and $A \to C$ in $\bComm(\bC)$ is given by $B \ootimes_A C$.
\end{prop}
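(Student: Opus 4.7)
The plan is to mimic the standard construction of the tensor product of commutative algebras from classical algebra, with the closed symmetric monoidal structure playing the role of the underlying vector space/module structure. I would construct $B \ootimes_A C$ explicitly as a coequalizer in $\bC$, endow it with a commutative algebra structure, and then verify the universal property directly.

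\textbf{Step 1: Construction.} Since $\bC$ is finitely cocomplete, I can define $B \ootimes_A C$ as the coequalizer in $\bC$ of the two parallel maps
\[ B \ootimes A \ootimes C \rightrightarrows B \ootimes C, \]
where one map uses the $A$-module action on $B$ (composed with the right unit) and the other uses the $A$-action on $C$ (composed with the left unit and the symmetry, if needed). Closedness of $\ootimes$ guarantees that $\ootimes$ preserves colimits in each variable, which will be repeatedly used below.

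\textbf{Step 2: Algebra structure.} The multiplication on $B \ootimes C$ is defined as the composite
\[ (B \ootimes C) \ootimes (B \ootimes C) \xrightarrow{\ \mathrm{id}\ootimes\tau\ootimes\mathrm{id}\ } B \ootimes B \ootimes C \ootimes C \xrightarrow{\ m_B \ootimes m_C\ } B \ootimes C, \]
using the symmetry $\tau$; commutativity of the monoidal structure together with commutativity of $B$ and $C$ ensures this is commutative and associative, and the unit is $\eta_B \ootimes \eta_C$. Using again that $\ootimes$ preserves colimits in each variable (so that $(B \ootimes_A C) \ootimes (B \ootimes_A C)$ is the appropriate coequalizer), I would then check that this multiplication descends to $B \ootimes_A C$ by verifying that both composites with the parallel pair above agree — this is precisely the fact that $B$ and $C$ are $A$-algebras with compatible $A$-actions (commutativity of $A$ is used here to identify left and right actions).

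\textbf{Step 3: Structure maps and universal property.} The maps $B \to B \ootimes_A C$ and $C \to B \ootimes_A C$ are obtained via $B \cong B \ootimes I \xrightarrow{\mathrm{id}\ootimes\eta_C} B \ootimes C \to B \ootimes_A C$ and symmetrically; both are algebra homomorphisms and agree after pre-composition with $A \to B$ and $A \to C$ by construction of the coequalizer. For the universal property, given a commutative algebra $D$ with algebra maps $f \colon B \to D$ and $g \colon C \to D$ equalizing the two composites from $A$, the composite
\[ B \ootimes C \xrightarrow{\ f \ootimes g\ } D \ootimes D \xrightarrow{\ m_D\ } D \]
is a $\bC$-morphism that coequalizes the defining pair (this is where the condition $f|_A = g|_A$ is used), hence factors uniquely through $B \ootimes_A C$; one then checks it is an algebra map, which amounts to associativity and commutativity of $m_D$ together with the fact that $f$ and $g$ are already algebra maps.

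\textbf{Expected obstacle.} No single step is hard, but the bookkeeping in Step 2 — showing that the multiplication on $B \ootimes C$ factors through the coequalizer, and that the resulting multiplication on $B \ootimes_A C$ is associative and unital — is the most delicate, since one must simultaneously use colimit-preservation of $\ootimes$ in each variable, the symmetry $\tau$, and commutativity of $A$. In practice one draws the relevant diagrams and appeals to the standard fact that for a commutative monoid $A$ in a symmetric monoidal category, the category of commutative $A$-algebras has pushouts computed by this relative tensor product; the present statement is exactly a formal version of this.
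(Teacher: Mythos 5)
Your argument is correct and is the standard construction of the relative tensor product: $B \ootimes_A C$ is the coequalizer of the two $A$-action maps $B \ootimes A \ootimes C \rightrightarrows B \ootimes C$, equipped with the multiplication descended from $B \ootimes C$, and verified directly to have the universal property of the pushout in $\bComm(\bC)$. The paper, by contrast, gives no argument: it simply refers to Johnstone, \emph{Sketches of an Elephant}, Corollary 1.1.9 (p.\ 478), which proves the result under even weaker hypotheses. Your proof is therefore a self-contained substitute for the paper's citation, not a different route through different lemmas; it buys transparency at the cost of length. The one point you flag as delicate but do not carry out in full — that the multiplication on $B \ootimes C$ descends to the coequalizer and that the result is associative, unital, and commutative — does go through exactly as you expect: closedness makes $(-)\ootimes X$ and $X \ootimes (-)$ left adjoints, hence coequalizer-preserving, so $(B\ootimes_A C)\ootimes(B\ootimes_A C)$ is again presented as a coequalizer in each slot, and one only needs to check that $m_B\ootimes m_C$ post-composed with the quotient map coequalizes the relevant parallel pairs, which follows from the fact that the structure maps $A \to B$ and $A \to C$ are monoid homomorphisms.
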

\begin{proof}
See \cite[Corollary 1.1.9 at page 478]{Joh} where the result is proved under weaker hypothesis.
\end{proof}

In the next corollary we apply Proposition \ref{prop:tensor_pushouts} to the homotopy category of $\infty\minus\bC$.

\begin{cor} \label{cor:tensor_pushouts} 
Let $\Spec(A) \to \Spec(B)$ in $\infty\minus\bAff(\bC)$ be a homotopy Zariski open immersion, then for any $\Spec(C) \to \Spec(B)$ the homotopy base change $\Spec(A) \times_{\Spec(B)}^\R \Spec(C) \to \Spec(B)$ is a homotopy Zariski open immersion.
\end{cor}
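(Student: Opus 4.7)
The plan is to reduce Corollary \ref{cor:tensor_pushouts} to a short manipulation of derived tensor products. Writing $D := A \ootimes_B^\L C$, the derived analogue of Proposition \ref{prop:tensor_pushouts} (homotopy pushouts in $\infty\minus\bComm(\bC)$ are computed by derived tensor products of cofibrant-replaced algebras, via the combinatorial model structure on $\bComm(\bSimp(\bC))$ recalled before Definition \ref{defn:derived_affine_scheme}) identifies $\Spec(A) \times_{\Spec(B)}^\R \Spec(C) \cong \Spec(D)$. Hence the claim to verify is that the canonical morphism $D \ootimes_C^\L D \to D$ is an equivalence in $\infty\minus\bMod_D$.

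Next, using associativity and commutativity of the derived tensor product, I would compute
\[ D \ootimes_C^\L D = (A \ootimes_B^\L C) \ootimes_C^\L (A \ootimes_B^\L C) \cong (A \ootimes_B^\L A) \ootimes_B^\L C. \]
This is the algebraic incarnation of the pasting identity $(\Spec(A) \times_{\Spec(B)}^\R \Spec(C)) \times_{\Spec(C)}^\R (\Spec(A) \times_{\Spec(B)}^\R \Spec(C)) \cong (\Spec(A) \times_{\Spec(B)}^\R \Spec(A)) \times_{\Spec(B)}^\R \Spec(C)$ for homotopy fibre products of derived affines. The hypothesis that $\Spec(A) \to \Spec(B)$ is a homotopy Zariski open immersion says $A \ootimes_B^\L A \cong A$, and tensoring this equivalence on the right by $C$ over $B$ yields
\[ D \ootimes_C^\L D \cong A \ootimes_B^\L C = D. \]
One then checks that the resulting equivalence is actually induced by the canonical multiplication of $D$, so that $\Spec(D) \to \Spec(C)$ is indeed a homotopy Zariski open immersion in the sense of Definition \ref{defn:homotopy_zariski_topology}.

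The argument is essentially formal, and the only delicate point is ensuring that the associativity and commutativity manipulations make sense at the $\infty$-categorical level rather than only up to lower homotopies. This is taken care of by the symmetric monoidal model structure on $\bSimp(\bC)$ from Theorem 3.7 of \cite{BaBeKr2} together with its lift to $\bComm(\bSimp(\bC))$, under which cofibrancy and the structural coherence isomorphisms interact compatibly with the symmetric monoidal structure, so that the displayed equivalences may be computed interchangeably as homotopy pushouts in $\infty\minus\bComm(\bC)$ or as derived tensor products of cofibrant-replaced algebras in $\bSimp(\bC)$. Beyond that, the argument is purely a matter of relative tensor-product bookkeeping.
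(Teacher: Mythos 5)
Your proof is correct, and the key identity $D \ootimes_C^\L D \cong (A \ootimes_B^\L A) \ootimes_B^\L C \cong A \ootimes_B^\L C = D$ is exactly the right computation. The paper reaches the same conclusion by a more abstract route: using Proposition \ref{prop:tensor_pushouts} it observes that the condition $A \ootimes_B^\L A \to A$ being an equivalence is precisely the statement that $B \to A$ is an epimorphism in the homotopy category of $\infty\minus\bComm(\bC)$ (since the derived tensor product is the coproduct there), and then invokes the general categorical fact that epimorphisms are stable under cobase change. Your argument is the explicit unwinding of that categorical fact in tensor-product language; it has the advantage of being self-contained and of making visible exactly which coherences (associativity and base change for relative tensor products in the symmetric monoidal $\infty$-category of modules) are being used --- the point you rightly flag as the only delicate one, and which is supplied by the symmetric monoidal model structure on $\bSimp(\bC)$. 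The paper's version is shorter and transfers verbatim to any setting where open immersions are characterized by an epimorphism condition. Both proofs ultimately rest on the same input, namely that homotopy pushouts of commutative algebra objects are computed by derived tensor products.
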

\begin{proof}
By Proposition \ref{prop:tensor_pushouts} the condition of being homotopy Zariski open immersion is equivalent to $B \to A$ being an epimorphism in the homotopy category of $\infty\minus\bComm(\bC)$. Therefore, being the derived tensor product the coproduct in the homotopy category of $\infty\minus\bComm(\bC)$, it preserves epimorphisms and hence homotopy Zariski open immersions.
\end{proof}

Because of Proposition \ref{prop:tensor_pushouts} and Corollary \ref{cor:tensor_pushouts} we will often use the name \emph{homotopy epimorphism} to refer to homotopy Zariski open localizations. We also are interested in understanding homotopy filtered colimits in $\infty\minus\bComm(\bC)$. 

\begin{prop} \label{prop:filtered_colimits_monoids} 
Let $(\bC, \ootimes)$ be a closed symmetric monoidal category, then the forgetful functor $\bComm(\bC) \to \bC$ creates all filtered colimits that exist in $\bC$.
\end{prop}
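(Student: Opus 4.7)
The plan is to exploit the fact that a closed symmetric monoidal structure forces $\ootimes$ to preserve all colimits in each variable separately, and then use the standard fact that in a filtered diagram, the diagonal is cofinal.

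First, let $F : I \to \bComm(\bC)$ be a filtered diagram and set $C = \colim_I (U \circ F)$ in $\bC$ (where $U$ is the forgetful functor), which exists by hypothesis. Because $\bC$ is closed symmetric monoidal, the functor $X \ootimes (\minus)$ is left adjoint (to $\ul{\Hom}(X, \minus)$), hence preserves all colimits, and similarly in the first variable. Consequently $\ootimes$ preserves filtered colimits in both variables jointly, giving
\[ C \ootimes C \cong \colim_{(i,j) \in I \times I} F(i) \ootimes F(j). \]
Since $I$ is filtered, the diagonal $\Delta : I \to I \times I$ is cofinal, so the right-hand side equals $\colim_{i \in I} F(i) \ootimes F(i)$. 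The multiplications $\mu_i : F(i) \ootimes F(i) \to F(i)$ form a cocone (as each map in the diagram is a monoid morphism) and thus induce a canonical multiplication $\mu : C \ootimes C \to C$; the unit $\eta_{\bC} \to C$ is obtained similarly as the colimit of the units $\eta_{\bC} \to F(i)$.

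Next, I would verify the commutative monoid axioms for $(C, \mu, \eta)$. Each axiom (associativity, commutativity, unit) is an equality of two morphisms out of some iterated tensor power of $C$; by the same argument these iterated tensor powers identify with colimits of the corresponding iterated tensor powers of the $F(i)$, and the two competing morphisms are the colimits of the two corresponding morphisms at level $i$, which agree because $F(i)$ is a commutative monoid. Hence $C$ inherits a canonical commutative monoid structure, the cocone morphisms $F(i) \to C$ are monoid morphisms, and uniqueness of this monoid structure is immediate from the observation that $\mu$ and $\eta$ are forced by the compatibility with the given cocone together with the colimit identification $C \ootimes C \cong \colim F(i) \ootimes F(i)$.

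Finally, for the universal property in $\bComm(\bC)$: given a commutative monoid $D$ and a cocone $\{F(i) \to D\}_{i \in I}$ in $\bComm(\bC)$, the underlying cocone in $\bC$ factors uniquely through $C$ by the universal property of the colimit in $\bC$, and one checks that the induced map $C \to D$ commutes with multiplications and units by computing both sides as colimits of the corresponding equalities at each level $i$, exactly as in the axiom verification. Since the monoid structure on $C$ and the universal map are both determined (and hence uniquely lifted) from the underlying data in $\bC$, $U$ creates the filtered colimit. No step here is a real obstacle; the only subtle point is the double use of cofinality of $\Delta : I \to I \times I$ (and its higher-arity analogues $I \to I^n$) to reduce tensor powers of the colimit to colimits of tensor powers, which is the standard input making filtered colimits of monoids well-behaved.
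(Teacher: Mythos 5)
Your proof is correct. The paper itself gives no internal argument here --- it simply cites Johnstone's \emph{Sketches of an Elephant}, Lemma 1.1.8(ii) --- so there is nothing to compare against step by step; your argument is the standard one that underlies that reference. The key points you isolate are exactly the right ones: (i) closedness of the monoidal structure makes each $X \ootimes (\minus)$ a left adjoint and hence colimit-preserving, which together with the Fubini theorem for colimits yields $C \ootimes C \cong \colim_{(i,j)} F(i) \ootimes F(j)$ without assuming $\bC$ is cocomplete (only that the relevant filtered colimit exists, as the statement requires); (ii) cofinality of the diagonal $\Delta: I \to I^n$ for filtered $I$ collapses this to $\colim_i F(i)^{\ootimes n}$, from which the multiplication, unit, axioms, and universal property all follow by taking colimits of the level-wise structure. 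Two very minor remarks you might add for completeness: the unit map $\eta_{\bC} \to C$ is independent of the choice of $i$ because a filtered category is nonempty and connected; and the ``creates'' clause (uniqueness of the lifted monoid structure on $C$ compatible with the cocone) is forced precisely because the cocone legs $F(i) \to C$ are jointly epimorphic after tensoring, via the identification $C \ootimes C \cong \colim_i F(i) \ootimes F(i)$ --- which you do state, but it is worth making explicit that this is where ``creates'' (as opposed to merely ``preserves and reflects'') is earned. Also note that your argument never actually uses closedness in full strength: preservation of \emph{filtered} colimits in each variable of $\ootimes$ would suffice, which is the level of generality at which Johnstone states the result.
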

\begin{proof}
See \cite[Lemma 1.1.8 (ii) at page 477]{Joh}.
\end{proof}

More explicitly, in the case when $\bC$ has all filtered colimits Proposition \ref{prop:filtered_colimits_monoids} says that filtered colimits of monoids over $\bC$ are computed as the filtered colimits of the underlying objects of $\bC$ equipped with their canonical structure of monoid.

\begin{prop} \label{prop:filtered_colimits_homotopy_epis} 
Let $(\bC, \ootimes)$ be a closed symmetric monoidal quasi-abelian category as above. Let $\{ f_i: A_i \to B_i \}_{i \in I}$ be a filtered family of homotopy epimorphisms in $\infty\minus\bComm(\bC)$, then 
\[ \L \limind_{i \in I} f_i: \L \limind_{i \in I} A_i \to \L \limind_{i \in I} B_i \]
is a homotopy epimorphism.
\end{prop}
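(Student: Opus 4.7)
The plan is to reduce the claim to the interchange of filtered homotopy colimits with the derived tensor product, after which each term becomes an equivalence by hypothesis. Set $A := \L\limind_{i\in I} A_i$, $B := \L\limind_{i\in I} B_i$, and $f := \L\limind_{i\in I} f_i\colon A \to B$. By definition we must show that the multiplication map $B \ootimes_A^\L B \to B$ is an equivalence in $\infty\minus\bMod_B$.

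First I would observe that, by Proposition \ref{prop:filtered_colimits_monoids} applied to $\infty\minus\bComm(\bC)$ (valid because the forgetful functor creates filtered colimits in the model/$\infty$-categorical setting as well, using that $\bSimp(\bC)$ is combinatorial and the model structure on $\bComm(\bSimp(\bC))$ is transferred), the object $A$ and the object $B$ are computed as filtered homotopy colimits in $\infty\minus\bC$ of the underlying diagrams. In particular, the canonical maps $A_i \to A$ and $B_i \to B$ exhibit $A$ and $B$ as colimits of the constant diagrams enhanced by the structure of the filtered system.

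Next I would use the key compatibility: the symmetric monoidal structure on $\infty\minus\bC$ commutes with filtered colimits in each variable (the internal hom exists, so $\ootimes$ is a left adjoint), and therefore for any filtered diagram of $\infty\minus\bComm(\bC)$-algebras and modules one has a canonical equivalence
\[
B \ootimes_A^\L B \;\simeq\; \L\limind_{i\in I}\bigl( B_i \ootimes_{A_i}^\L B_i \bigr).
\]
The justification is standard in $\infty$-categorical relative algebra: since $A \simeq \L\limind_i A_i$, base change along $A_i \to A$ together with taking colimits gives $\L\limind_i (M_i \ootimes_{A_i}^\L A) \simeq (\L\limind_i M_i)$ in $\infty\minus\bMod_A$ for any compatible system $(M_i)$, and applying this twice to $B_i$ produces the displayed formula. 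Under this equivalence the multiplication map $B \ootimes_A^\L B \to B$ corresponds to the filtered colimit of the multiplication maps $B_i \ootimes_{A_i}^\L B_i \to B_i$.

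Finally, by hypothesis each $f_i$ is a homotopy epimorphism, so each multiplication map $B_i \ootimes_{A_i}^\L B_i \to B_i$ is an equivalence; filtered homotopy colimits of equivalences are equivalences, so $B \ootimes_A^\L B \to B$ is an equivalence, as required. The main obstacle is the verification of the interchange formula displayed above — it is the one genuinely non-formal input, since it relies on the fact that in our setting $(\minus) \ootimes_A^\L (\minus)$ is a homotopy left adjoint and that filtered colimits of module categories over a filtered colimit of algebras behave as expected; modulo this (which is part of the $\infty$-categorical formalism set up through Section \ref{sec:quasi_abelian}), the rest of the argument is routine.
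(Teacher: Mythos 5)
Your proposal is correct and follows essentially the same route as the paper's proof: both reduce the claim to the interchange of the derived tensor product with filtered homotopy colimits, i.e.\ the equivalence $B \ootimes_A^\L B \simeq \L\limind_{i}(B_i \ootimes_{A_i}^\L B_i)$, and then pass the termwise equivalences through the colimit. (If anything, your orientation of the tensor factors is the one consistent with Definition \ref{defn:homotopy_zariski_topology}, whereas the paper's displayed formulas swap the roles of $A_i$ and $B_i$.)
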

\begin{proof}
For any $i$ we have that the morphism
\[ A_i \ootimes_{B_i}^\L A_i \to A_i \]
induced by $f_i$ is an isomorphism in the homotopy category. Therefore, we get an isomorphism
\[ \L \limind_{i \in I} A_i \ootimes_{B_i}^\L A_i \to \L \limind_{i \in I} A_i \]
and since $\ootimes_{B_i}^\L$ commutes with $\limind$ and the colimit is filtered we get an isomorphism
\[ (\L \limind_{i \in I} A_i) \ootimes_{\L \underset{i \in I}\limind B_i}^\L (\L \limind_{i \in I} A_i) \to \L \limind_{i \in I} A_i \] 
because of the functorial isomorphism 
\[ \L \limind_{i \in I} (\minus) \ootimes_{B_i}^\L (\minus) \cong (\minus) \ootimes_{\L \underset{i \in I}\limind B_i}^\L (\minus). \]
\end{proof}

As later on we will be interested in understanding points of the $\infty$-topoi we will define (or at least proving that they exist), we will recall Lurie's $\infty$-categorical generalization of Deligne's Theorem. This result gives a general answer to the question about the existence of points that applies in our situation. 

%As mentioned before if $\bC$ is a quasi-abelian as above, then for any $A \in \infty\minus\bComm(\bC)$ the category $(\bMod_A, \wotimes_A^\L)$ is a combinatorial symmetric monoidal $\infty$-category and therefore we can consider on $\infty\minus\bComm(\bMod_A)^\op = \infty\minus\bAff_A$ the homotopy Zariski topology as above. This gives for any such $A$ an $\infty$-topos associated via the (formal) homotopy Zariski topology, similarly to the topos associated to a scheme via the (formal) Zariski topology.

\begin{thm} \label{thm:deligne_homotopy}
If $A \in \infty\minus\bComm(\bC)$, then the $\infty$-topos $\Zar_A$ defined by the (formal) homotopy Zariski topology has enough points.
\end{thm}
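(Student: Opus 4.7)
The plan is to invoke Lurie's $\infty$-categorical generalization of Deligne's completeness theorem, which asserts that every coherent $\infty$-topos admits enough points. It therefore suffices to verify that the $\infty$-topos of sheaves on the $\infty$-site $\Zar_A$ is coherent in Lurie's sense, i.e.\ that the terminal object is coherent and that the site has a basis of coherent objects closed under finite limits.

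First I would show that the subcategory of $\infty\minus\bAff_A$ whose objects are the homotopy Zariski open immersions $\Spec(B) \to \Spec(A)$ is closed under finite homotopy limits. The terminal object is $\Spec(A)$ itself. By Proposition \ref{prop:tensor_pushouts} applied to the homotopy category of $\infty\minus\bComm(\bC)$, fibre products are computed by derived tensor products, and by Corollary \ref{cor:tensor_pushouts} the class of homotopy Zariski open immersions is stable under such derived base change. Hence $\Zar_A$ has all finite limits, and the representable sheaves form a basis stable under finite limits.

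Second, by Definition \ref{defn:homotopy_zariski_topology} the pretopology on $\Zar_A$ is finitary by construction: every covering family $\{\Spec(A_i)\to \Spec(B)\}_{i\in I}$ admits a finite subfamily indexed by $J\subset I$ that remains a cover. Together with the stability under finite limits of the previous paragraph this means that $\Zar_A$ carries a coherent Grothendieck pretopology in the sense of Lurie, so the associated $\infty$-topos is coherent. Lurie's Deligne theorem then immediately produces enough points.

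The main obstacle, in my view, is not the invocation of Deligne's theorem but the bookkeeping needed to check that every representable sheaf is $n$-coherent for every $n$, which is what ``coherent $\infty$-topos'' really demands. This is done by induction on $n$: the base case $n=0$ reduces to the quasi-compactness of covers built into Definition \ref{defn:homotopy_zariski_topology}, and the inductive step uses that the class of $n$-coherent objects is stable under finite fibre products, which in turn follows from the stability of homotopy Zariski open immersions under derived base change established in Corollary \ref{cor:tensor_pushouts}. Once $\Zar_A$ has been identified as a coherent $\infty$-topos, the conclusion is formal.
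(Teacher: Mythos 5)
Your proposal is correct and follows essentially the same route as the paper, which simply cites Lurie's $\infty$-categorical Deligne theorem (Theorem 4.1 of \cite{DAGVII}) after observing that the homotopy Zariski topology is finitary in the sense of Definition 3.17 of \emph{loc.\ cit.} You merely spell out the two hypotheses of that theorem — stability of homotopy Zariski open immersions under derived base change (Corollary \ref{cor:tensor_pushouts}), giving finite limits in the site, and the built-in finiteness of covers from Definition \ref{defn:homotopy_zariski_topology} — which the paper leaves implicit.
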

\begin{proof}
This is a particular case of Theorem 4.1 of \cite{DAGVII} because the (formal) homotopy Zariski topology is finitary (\cf Definition 3.17 of \cite{DAGVII}).
\end{proof}

One issue of Theorem \ref{thm:deligne_homotopy} is in the word ``formal" that we usually ignore in our discussions. The morphisms of $\infty\minus\bAff_A$ do not have any size restriction, therefore this class of morphisms does not seem possible to describe nor control in any reasonable way. 
One of the main goals of this work is to fix this issue in the case when $A$ is a Banach ring, or a bornological ring, by finding an explicitly describable canonical small sub-$\infty$-site of $\infty\minus\bAff_A$ and relate the $\infty$-site we obtain with the adic spectrum of $A$.

\section{Quasi-abelian categories for analytic geometry}  \label{sec:bornological}

In this section we consider particular cases of the symmetric monoidal quasi-abelian categories discussed in Section \ref{sec:quasi_abelian} that are relevant in analytic geometry. These categories are the category of Banach modules, the contracting category of Banach modules and the category of (complete) bornological modules. We now recall their definitions and basic properties.

\subsection{The category of Banach modules}

Let $R$ be a Banach ring. By this we mean that $R$ is a ring equipped with a Banach norm such that the multiplication and addition maps are bounded morphisms of Banach abelian groups (more precisely the addition is suppose to be a contracting map, \ie the triangle inequality holds). In this work we also suppose $R$ to be non-Archimedean and that Banach modules over $R$ are equipped with a non-Archimedean norm although none of these restrictions are necessary for the theory to work. We will comment more on the differences between the general case and the non-Archimedean case when these occur later on. The category of (non-Archimedean) Banach rings has an initial object given by $\Z_\triv = (\Z, |\minus|_0)$ where $|\minus|_0$ is the \emph{trivial norm} that assumes the value $1$ on all $n \ne 0$ (in the category of all Banach rings the initial object is $\Z_\ar = (\Z, |\minus|_\infty)$, the ring of integers equipped with the \emph{Euclidean norm}). The category $\bBan_R$ of \emph{Banach $R$-modules} is defined as the category of Banach abelian groups\footnote{Recall that we are now restricting ourselves only to non-Archimedean definitions, therefore in this context Banach abelian group means a Banach abelian group equipped with an ultrametric norm.} equipped with a bounded $R$-action and bounded morphisms between them. The \emph{completed projective tensor product} of two objects $M, N \in \bBan_R$ is defined as 
\[ M \wotimes_R N = \widehat{( M \otimes_R N, |\minus|_{M \otimes_R N})} \]
where $\what{(\minus)}$ denotes the separated completion and 
\[ |x|_{M \otimes_R N} = \inf \l \{ \max |a_i| |b_i| \ \big| \  x = \sum a_i \otimes b_i \r \} \]
for any $x \in M \otimes_R N$. We recall the following result.

\begin{prop} \label{prop:banach_closed_monoidal}
The category $\bBan_R$ is quasi-abelian. Moreover, the monoidal structure given by the completed projective tensor product is closed when the hom-sets are equipped with the Banach $R$-module structure given by the operator norm.
\end{prop}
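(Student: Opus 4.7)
The plan is to verify in turn three things: that $\bBan_R$ is pre-abelian, that its class of strict short exact sequences furnishes a Quillen exact structure, and that $\wotimes_R$ admits an internal hom adjoint given by the operator norm.

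For pre-abelianness, given a bounded $R$-linear map $f\colon M \to N$, I would take $\ker(f) = f^{-1}(0)$ with the subspace norm (a closed submodule, hence Banach) and $\coker(f) = N/\overline{f(M)}$ with the quotient norm (Banach since $N$ is and $\overline{f(M)}$ is closed). The usual universal properties are immediate. The resulting coimage is $M/\ker(f)$ and image is $\overline{f(M)}$, and the canonical morphism $\coim(f) \to \im(f)$ is a continuous $R$-linear bijection onto the dense submodule $f(M)$; by the open mapping theorem this map is an isomorphism precisely when $f(M)$ is already closed, which is therefore the criterion for strictness.

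For the quasi-abelian axiom I would establish stability of strict epimorphisms under pullbacks and of strict monomorphisms under pushouts. Given a strict epimorphism $f\colon M \to N$ and a morphism $g\colon N' \to N$, the concrete pullback $M \times_N N' = \{(x,y)\in M \oplus N' : f(x)=g(y)\}$ with the maximum norm projects surjectively onto $N'$, and the open mapping theorem identifies its quotient topology with the norm topology on $N'$. The dual claim for pushouts of strict monomorphisms is handled by the concrete model $(N \oplus M')/\overline{\{(f(x),-g(x)) : x \in M\}}$, where one again invokes open mapping to see that the induced map from $M'$ is a strict monomorphism.

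For the closed monoidal structure, the projective tensor seminorm on $M \otimes_R N$ is tailored so that bounded $R$-bilinear maps $M \times N \to P$ correspond isometrically to bounded $R$-linear maps $M \otimes_R N \to P$ whenever $P$ is Banach; separating and completing extends this to a natural bijection with morphisms $M \wotimes_R N \to P$. Associativity, commutativity and unitality then follow from the analogous properties of the algebraic tensor product together with uniqueness of completion. For the internal hom, $\ul{\Hom}_R(N,P)$ endowed with the operator norm is Banach (uniform limits of bounded $R$-linear maps are bounded and $R$-linear), and the classical adjunction $\phi \mapsto (m \mapsto \phi(m,-))$ is isometric by the very construction of the projective tensor seminorm, hence extends uniquely to the required natural isomorphism. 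The most delicate point of the whole argument is the pullback/pushout stability in the quasi-abelian step, which is the one place where completeness genuinely enters via the open mapping theorem; everything else is bookkeeping with the defining seminorm.
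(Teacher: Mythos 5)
Your outline of the closed monoidal structure (projective tensor seminorm representing bounded bilinear maps, completion, operator norm on $\ul{\Hom}$) is fine and standard. The genuine gap is in the quasi-abelian part: you lean on the open mapping theorem, but the proposition is stated for an \emph{arbitrary} non-Archimedean Banach ring $R$ (e.g.\ $\Z_\triv$), and over such a ring the open mapping theorem fails. A bounded bijection of Banach $R$-modules need not be an isomorphism: take $c^0(X)$ for the same underlying set with two inequivalent weight functions, one dominating the other; the identity is a bounded bijection with unbounded inverse. Consequently your strictness criterion is also wrong in general --- a morphism with closed image need not be strict, because the bounded bijection $M/\ker(f) \to f(M)$ need not be open. (The paper itself only ever invokes the open mapping theorem ``for $k$'' a non-trivially valued field, e.g.\ in Example \ref{exa:buzz}; the reference \cite{Hen} it cites for closedness of ideals requires a zero sequence of units, precisely because the theorem is unavailable in general.)

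The fix is to work with the quantitative characterizations instead: $f$ is a strict epimorphism iff there is $C>0$ such that every $n \in N$ lifts to $m \in M$ with $|m| \le C|n|$, and $f$ is a strict monomorphism iff $f$ is injective and $|m| \le C|f(m)|$ for all $m$. With these, pullback stability is immediate (lift $g(y)$ boundedly to $x \in M$, then $(x,y)$ is a bounded lift of $y$ in the max-norm pullback), and pushout stability follows from a short ultrametric estimate showing $\inf_{x}\max(|f(x)|,|m'-g(x)|) \ge c|m'|$; no open mapping theorem and no completeness is needed for these axioms --- completeness enters only to guarantee that kernels, cokernels, pullbacks and pushouts (quotients by \emph{closed} submodules) are again objects of $\bBan_R$. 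Note that the paper itself does not prove the statement but cites \cite[Propositions 3.15 and 3.17]{BaBe}, so your instinct to supply a direct argument is reasonable; it just has to avoid the open mapping theorem at the stated level of generality.
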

\begin{proof}
\cf \cite[Proposition 3.15 and Proposition 3.17]{BaBe}.
\end{proof}

The main drawback of the category $\bBan_R$ is that it does not have infinite products nor infinite coproducts. To remedy to this issue we will introduce the category of bornological modules. Other choices are possible, but we hope to convince the reader that this is the best choice (known to the authors) for our goals. We now introduce flatness in the context of Banach modules.

\begin{defn} \label{defn:flat}
We say that a Banach $R$-module $M$ is \emph{flat} if the functor $(\minus) \wotimes_R M$ is strictly exact (in the sense of Definition \ref{defn:exact_functor}).
\end{defn}

More explicitly, $M \in \bBan_R$ is flat if the functor $(\minus) \wotimes_R M$ preserves the kernel of any morphism. 
In the next section, when we will study the contracting category of Banach modules, we will prove that all projective objects of $\bBan_R$ are flat (\cf Proposition \ref{prop:enough_projectives}) and in particular $\wotimes_R$-acyclic (as a consequence of Corollary \ref{cor:acyclic_object}). Therefore, $\bBan_R$ has enough $\wotimes_R$-acyclic objects and it follows from Proposition \ref{prop:lax_monoidal} that the inclusion functor $\bBan_R \to \LH(\bBan_R)$ is lax monoidal and that its adjoint $\LH(\bBan_R) \to \bBan_R$ is strongly monoidal.

The next category that we describe is the contracting category of Banach modules.

\subsection{The contracting category of Banach modules}

Let $R$ be a Banach ring\footnote{Always understood non-Archimedean if not otherwise stated.}.

\begin{defn}
The \emph{contracting category of Banach $R$-modules} is the subcategory $\bBan_R^{\le 1} \subset \bBan_R$ where the hom-sets are given by considering only contracting morphisms.\footnote{Again, this definition can be stated for general Banach rings and general modules but we restrict ourselves to the case when the modules are equipped with ultrametric norms.}
\end{defn}

Notice that the categories $\bBan_R^{\le 1}$ and $\bBan_R$ have the same objects and they only differ for the hom-sets. Moreover, isomorphism classes of objects in $\bBan_R^{\le 1}$ and $\bBan_R$ differ because in the former category modules are isomorphic if and only if they are isometrically isomorphic whereas in the latter isomorphic modules are equipped with equivalent norms.

\begin{prop}
The category $\bBan_R^{\le 1}$ is quasi-abelian and is complete and cocomplete.
\end{prop}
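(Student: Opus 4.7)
The plan is to verify the pre-abelian structure, then the Quillen exact axioms for strict short exact sequences, and finally the existence of all small limits and colimits; products and especially coproducts will be the crux of the argument.

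First I would observe that $\bBan_R^{\le 1}$ is additive: the zero morphism is contracting, and in the non-Archimedean setting the sum of two contracting maps $f, g$ is contracting because $|f(x)+g(x)| \le \max(|f(x)|,|g(x)|) \le |x|$. Kernels are realised as the set-theoretic kernels equipped with the restricted norm, and cokernels as the separated quotients $N/\overline{\im(f)}$ equipped with the residue norm; both canonical maps are visibly contracting, so $\bBan_R^{\le 1}$ is pre-abelian. A subtle point worth recording is that, because isomorphisms in $\bBan_R^{\le 1}$ are isometries, a morphism $f\colon M\to N$ is strict if and only if the canonical bijection $\coim(f) \to \im(f)$ is an \emph{isometric} isomorphism — equivalently, the residue norm on $M/\ker(f)$ agrees with the subspace norm on $f(M) \subset N$.

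Next I would check that the class of strict short exact sequences is stable under pullback of strict epimorphisms and pushout of strict monomorphisms. For a strict epimorphism $p\colon M \to M''$ and a contracting map $g\colon N \to M''$, the pullback is the closed submodule $\{(x,y)\in M\times N : p(x) = g(y)\}$ of the sup-norm product, and the projection onto $N$ is a strict contracting epimorphism by direct inspection. The dual pushout construction uses a quotient of the coproduct by the closed image of the difference map. Because every norm in sight is defined as a sup norm or a residue norm, isometric preservation of the relevant structure is automatic, bypassing the equivalence-of-norms complications one faces in $\bBan_R$. Together with the previous step this establishes that $\bBan_R^{\le 1}$ is quasi-abelian.

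Finally, for (co)completeness I would build the product of a family $(M_i)_{i\in I}$ as $\prod_i^{\le 1} M_i = \{(x_i) : \sup_i |x_i| < \infty\}$ with the sup norm, and the coproduct as the $c_0$-style direct sum $\coprod_i^{\le 1} M_i = \{(x_i) : \forall \varepsilon > 0,\ \{i : |x_i| \ge \varepsilon\} \text{ is finite}\}$, again with the sup norm. Given contracting $\phi_i\colon M_i \to N$, the universal map out of the coproduct is $(x_i) \mapsto \sum_i \phi_i(x_i)$: the series converges in the non-Archimedean complete target $N$ because $|\phi_i(x_i)| \le |x_i| \to 0$ along the filter of cofinite subsets, and the induced map is contracting by the ultrametric inequality. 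Arbitrary limits and colimits then follow by combining (co)products with equalisers and coequalisers, which are already provided as kernels and cokernels. The main obstacle is precisely the coproduct, and it is the passage from bounded to contracting morphisms that rescues it: the contracting hypothesis forces every $\phi_i$ to preserve unit balls, so the sup norm on the $c_0$-style sum supplies the uniform bound that is absent in $\bBan_R$ and enables convergence of the defining series in the target.
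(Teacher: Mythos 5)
Your proposal is correct and follows essentially the same route the paper takes (the paper merely sketches it and defers to the references): you correctly identify that ultrametricity is what makes the hom-sets of contracting maps additive, and your explicit bounded-sup product and $c_0$-style coproduct are exactly the constructions of \cite[Proposition 3.21]{BaBe} that the paper invokes, the latter reappearing in the paper as the modules $c^0(X)$. Nothing further is needed.
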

\begin{proof}
This can be check in the same way one checks that $\bBan_R$ is quasi-abelian noticing that the property that all norms involved must be ultrametric is necessary to ensure that the hom-sets are abelian groups. We omit the details. A proof of the fact that $\bBan_R^{\le 1}$ has all limits and colimits and their computations can be found in \cite[Proposition 3.21]{BaBe}.
\end{proof}

\begin{rmk} \label{rmk:archimedean_contracting}
The property of $\bBan_R^{\le 1}$ being quasi-abelian is a very distinctive feature of ultrametric Banach rings from the non-ultrametric ones. Indeed, if $R$ is not equipped with an ultrametric norm then $\bBan_R^{\le 1}$ is not an additive category, but besides the lack of additivity it has all the other properties discussed so far. We do not discuss this version of the theory as the non-additivity of $\bBan_R^{\le 1}$ would force us to introduce more abstract construction that will lead us too far astray from the main results we want to prove here.
\end{rmk}

The computations of limits and colimits in $\bBan_R^{\le 1}$ provide important constructions of modules in $\bBan_R$ and these computations are denoted by a superscript $(\minus)^{\le 1}$ meaning that the limit or colimit is computed in $\bBan_R^{\le 1}$.

\begin{defn} \label{defn:uniform_banach_ring}
A Banach ring $R$ is said \emph{uniform} if its norm is equivalent to the spectral semi-norm.\footnote{The definition of uniform ring can be given in several equivalent ways and the one given is equivalent to any other one the reader may know.}
\end{defn}

For uniform Banach rings we usually suppose that the norm is equal to the spectral norm as this holds up to isomorphism in $\bBan_{\Z_\triv}$ (but not up to isometry).

\begin{defn} \label{defn:Tate_algebra}
We define the \emph{($1$-dimensional) Tate algebra\footnote{Here we discuss only the non-Archimedean version of the theory but with small changes one can develop a theory that works uniformly over all Banach rings. See \cite{Bam}, \cite{BaBe}, \cite{BaBeKr} and \cite{BaBeKr2} where all Banach rings are considered.} over $\Z_\triv$ of radius $\rho$} as
\[ T_{\Z_\triv}(\rho) = \Z_\triv \lt \rho^{-1} T \gt = \l \{ \sum_{i = 0}^\infty a_i T^i \in \Z [\![ T ]\!] \Bigm | \lim_{i \to \infty} |a_i|_0 \rho^i = 0  \r \}. \]
For any Banach ring $R$ the \emph{($1$-dimensional) Tate algebra over $R$ of radius $\rho$} is defined as
\[ T_{\Z_\triv}(\rho) \wotimes_{\Z_\triv} R = R \lt \rho^{-1} T \gt.  \]
\end{defn}

The definition of Tate algebra can be easily generalized to any finite set of variables just by inductively define
\[ T_{\Z_\triv}(\rho_1, \ldots, \rho_n) = \Z_\triv \lt \rho_1^{-1} T_1, \ldots, \rho_n^{-1} T_n \gt \doteq  \Z_\triv \lt \rho_1^{-1} T_1, \ldots, \rho_{n - 1}^{-1} T_{n-1} \gt \lt \rho_n^{-1} T_n \gt. \]

We now study flatness properties of projective objects of $\bBan_R$ and $\bBan_R^{\le 1}$. We recall from Section 1 of \cite{BaBeKr2} that a normed set is a pointed set $(X, \star)$ equipped with a function $|\minus|_X: X \to \R_{\ge 0}$ such that $|x|_X = 0 \iff x = \star$ for $x \in X$. For any normed set $(X, |\minus|_X)$ we define the \emph{topologically free} Banach $R$-module
\begin{equation} \label{eq:topologically_free}
 c^0(X) = {\coprod_{\star \ne x \in X}}^{\le 1} R_{|x|_X}
\end{equation}
where $R_{|x|_X}$ denotes $R$ considered as a Banach module over itself where the norm has been rescaled by the real number $|x|_X$. For example, if $(X, |\minus|_X) = (\N, |\minus|_0)$, where $|n|_0 = 1$ for all $n \in \N$, then
\[ c^0(\N) = \l \{ (a_n) \in R^\N \Bigm | \lim_{n \to \infty} |a_n| = 0  \r \} \]
equipped with the $\max$ norm.

\begin{prop} \label{prop:c0_projective}
For any normed set $(X, |\minus|_X)$ the Banach $R$-module $c^0(X)$ is projective in $\bBan_R$ and in $\bBan_R^{\le 1}$.
\end{prop}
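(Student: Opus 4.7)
The plan is to derive projectivity directly from the universal property of the contracting coproduct defining $c^0(X)$. First I would identify contracting morphisms out of $c^0(X)$: since $c^0(X) = \coprod^{\le 1}_{x \ne \star} R_{|x|_X}$, for any $N \in \bBan_R^{\le 1}$ the universal property of the contracting coproduct gives a natural bijection between $\Hom_{\bBan_R^{\le 1}}(c^0(X), N)$ and the set of families $(n_x)_{x \ne \star}$ with $n_x \in N$ and $|n_x|_N \le |x|_X$, obtained by evaluating at the canonical element $1 \in R_{|x|_X}$; conversely, such a family determines the map $(r_x) \mapsto \sum_x r_x n_x$, which is well defined because $|r_x n_x|_N \le |r_x| \cdot |x|_X \to 0$ by definition of $c^0(X)$.

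With this reformulation in hand, projectivity in $\bBan_R^{\le 1}$ reduces to the following lifting statement: given a strict epimorphism $g : M \to N$ in $\bBan_R^{\le 1}$ and any family $(n_x)$ with $|n_x|_N \le |x|_X$, find $m_x \in M$ with $g(m_x) = n_x$ and $|m_x|_M \le |x|_X$; the universal property will then assemble the $(m_x)$ into a contracting morphism $c^0(X) \to M$ lifting the given one. Because $g$ is strict, the induced map $M/\ker(g) \to N$ is an isometric isomorphism, so $|n|_N = \inf\{|m|_M : g(m) = n\}$ for every $n \in N$. I would upgrade this infimum equality into the existence of a norm-attaining lift by an ultrametric successive-approximation argument: pick an approximate lift $m_x^{(0)}$ with small excess, lift the residual $n_x - g(m_x^{(0)})$ (which has strictly smaller norm) to a correction $m_x^{(1)}$, iterate, and take the non-Archimedean convergent series $\sum_k m_x^{(k)}$, whose sum is a lift of norm bounded by $|x|_X$.

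For projectivity in $\bBan_R$ the same scheme applies with one bookkeeping adjustment: a bounded morphism $c^0(X) \to N$ of operator norm $\le C$ corresponds, after rescaling the target norm by $C^{-1}$, to a family $(n_x)$ with $|n_x|_N \le C|x|_X$, and a strict epimorphism in $\bBan_R$ satisfies the analogous infimum condition only up to equivalence of norms, which introduces a harmless bounded constant in the lift but otherwise leaves the argument unchanged.

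The principal obstacle is the norm-attainment step in the lift through the strict epimorphism, and this is precisely where the ultrametric hypothesis enters decisively: in the Archimedean setting the same infimum equality would only yield lifts of norm $|x|_X + \varepsilon$, and the iterative series construction of corrections would fail to converge to an element of norm bounded by $|x|_X$, because in that case convergence is controlled by the sum rather than the maximum of the norms of the correction terms.
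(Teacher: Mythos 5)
Your reduction of the statement to a lifting problem via the universal property of the contracting coproduct is correct, and it is essentially what underlies the paper's (cited) proof, which reduces to projectivity of the individual summands $R_{|x|_X}$ plus the formal fact that a coproduct of projectives is projective. Your treatment of $\bBan_R$ is also essentially right: there one only needs lifts $m_x$ with $|m_x|_M \le C'|x|_X$ for a constant $C'$ uniform in $x$, and this is exactly what a strict epimorphism of $\bBan_R$ (target carrying the quotient norm up to equivalence) provides, with no norm attainment required.

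The gap is in the contracting case, at precisely the step you single out, and the successive-approximation scheme does not close it. If $m_x^{(0)}$ is an exact lift of $n_x$ with $|m_x^{(0)}|\le(1+\veps)|x|_X$, the residual $n_x-g(m_x^{(0)})$ is zero, there is nothing to iterate, and the excess factor never disappears. If instead $m_x^{(0)}$ is meant to satisfy $|m_x^{(0)}|\le|x|_X$ while only approximately hitting $n_x$, you have not said where such an element comes from, and producing it is exactly as hard as the original problem: rescaling an exact lift by $\la$ with $|\la|<1$ perturbs its image by $(1-\la)n_x$, which has full norm $|n_x|$ because $|1-\la|=1$ ultrametrically, so the residual is never small. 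Worse, the lifting statement you need is false for general strict epimorphisms of $\bBan_R^{\le 1}$ when the relevant value set is dense. Take $R=k$ a non-Archimedean field with dense value group (e.g. $\C_p$), $M=c^0(\N)$ with all $|e_i|=1$, and $g:M\to k$ sending $e_i\mapsto\la_i$ with $|\la_i|<1$ and $|\la_i|\to 1$. Then $g$ is a surjective contraction and the quotient norm on $k$ coincides with the given norm (test against $\la_i^{-1}e_i$), so $g$ is a strict epimorphism of $\bBan_k^{\le 1}$; yet any $m=\sum c_ie_i$ with $g(m)=1$ satisfies $1\le\sup_i|c_i||\la_i|<\sup_i|c_i|=|m|$ (the middle supremum is attained because $|c_i|\to 0$), so $1$ has no contracting lift. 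Thus the infimum defining the quotient norm need not be attained, and any correct proof in the contracting category must either restrict the class of epimorphisms being lifted against or work in a situation where attainment holds (e.g. norms valued in a discrete subset of $\R_{>0}$); this is the genuinely delicate point hidden behind the paper's reference, and your argument does not resolve it.
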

\begin{proof}
It is easy to see that the objects $R_{|x|_X}$ are projective in $R$ and coproducts of projectives is a projective object. For more details see \cite[Lemma 3.26]{BaBe}.
\end{proof}

In particular, for any object $M \in \bBan_R$ we can consider 
\[ c^0(M) = {\coprod_{0 \ne m \in M}}^{\le 1} R_{|m|} \]
where $M$ is considered as a normed set by forgetting his $R$-module structure.

\begin{prop} \label{prop:enough_projectives}
The canonical morphism $c^0(M) \to M$ is a strict epimorphism in $\bBan_R^{\le 1}$. In particular, both $\bBan_R$ and $\bBan_R^{\le 1}$ have enough projective objects.
\end{prop}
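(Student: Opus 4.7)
The plan is to establish two things: (i) that the canonical evaluation morphism $\varphi\colon c^0(M) \to M$, sending the generator $e_m$ of the summand $R_{|m|}$ to $m$, is a strict epimorphism in $\bBan_R^{\le 1}$; and (ii) to deduce enough projectives in both categories from this.

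First, I would unpack the coproduct. By construction $c^0(M) = \coprod^{\le 1}_{0 \ne m \in M} R_{|m|}$ is the completion of the normed $R$-module of almost-null families $(a_m)_{m \in M}$ with $|(a_m)| = \sup_m |a_m|\cdot |m|$, where $R_{|m|}$ is $R$ rescaled so that $\|1\|_{R_{|m|}} = |m|$. The universal property of the contracting coproduct gives the unique contracting $R$-linear map $\varphi$ determined by $e_m \mapsto m$. Surjectivity is immediate from $\varphi(e_m) = m$.

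Next I would verify strictness, which in $\bBan_R^{\le 1}$ amounts to showing that the quotient seminorm on $M$ induced by $\varphi$ coincides \emph{isometrically} with the original norm. For any $m \in M$ the element $e_m \in c^0(M)$ satisfies $|e_m| = |m|$ and $\varphi(e_m) = m$, giving $\|m\|_{\mathrm{quot}} \le |m|$. Conversely, since $\varphi$ is contracting, every preimage $x$ of $m$ satisfies $|m| = |\varphi(x)| \le |x|$, so $\|m\|_{\mathrm{quot}} \ge |m|$. Hence the quotient norm equals the original norm, and $\varphi$ is a strict epimorphism in $\bBan_R^{\le 1}$.

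For the second sentence, combine with Proposition \ref{prop:c0_projective}, which states that $c^0(M)$ is projective in both $\bBan_R^{\le 1}$ and $\bBan_R$. Since $c^0(M) \to M$ is surjective and its induced norm on $M$ equals the original norm, it is \emph{a fortiori} a strict epimorphism in $\bBan_R$ (where strictness only requires equivalence of norms on the target and the quotient). Thus for every $M$ in either category we have produced a strict epimorphism from a projective object, proving both categories have enough projectives. I do not foresee any real obstacle: the only subtle point is keeping track of the difference between equality and equivalence of norms when passing from $\bBan_R^{\le 1}$ to $\bBan_R$, and the argument above handles this directly.
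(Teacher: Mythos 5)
Your proof is correct and is essentially the standard argument that the paper delegates to the cited reference (\cite[Lemma 3.27]{BaBe}): the section $e_m\mapsto m$ shows the quotient norm is $\le |m|$, and contractivity of $\varphi$ gives the reverse inequality, so the quotient norm agrees isometrically with the norm of $M$. The only point worth flagging is that contractivity of $\varphi$ relies on the $R$-action on $M$ satisfying $|am|\le|a|\,|m|$, which one may assume after the usual renormalization (as in Lemma \ref{lemma:submultiplicative}); with that convention in place your argument, including the passage from strictness in $\bBan_R^{\le 1}$ to strictness in $\bBan_R$, is complete.
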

\begin{proof}
See \cite[Lemma 3.27]{BaBe}.
\end{proof}

Proposition \ref{prop:enough_projectives} immediately implies the following corollary.

\begin{cor} \label{cor:enough_projectives}
All projective objects of $\bBan_R$ and $\bBan_R^{\le 1}$ are direct summands of some $c^0(X)$.
\end{cor}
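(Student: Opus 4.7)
The plan is to read off the corollary directly from the splitting criterion for projectives applied to the strict epimorphism produced in Proposition \ref{prop:enough_projectives}. Fix a projective object $P$ in $\bBan_R$ (respectively $\bBan_R^{\le 1}$), and let $X = P$ regarded as a normed set by forgetting the $R$-module structure. Proposition \ref{prop:enough_projectives} supplies a strict epimorphism $\pi \colon c^0(X) \to P$ in $\bBan_R^{\le 1}$, which is in particular a strict epimorphism in $\bBan_R$.

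The key step is to exhibit a section of $\pi$. By Definition \ref{defn:projective_object}, projectivity of $P$ means that $\Hom(P,-)$ is exact in the quasi-abelian sense of Definition \ref{defn:exact_functor}. In particular $\Hom(P,-)$ sends the strict epimorphism $\pi$ to a strict epimorphism of abelian groups, i.e.\ an honest surjection
\[ \pi_* \colon \Hom(P, c^0(X)) \twoheadrightarrow \Hom(P, P). \]
Lifting $\id_P \in \Hom(P,P)$ through $\pi_*$ yields a morphism $\sigma \colon P \to c^0(X)$ with $\pi \circ \sigma = \id_P$, so $\pi$ splits and $P$ is a direct summand of $c^0(X)$.

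The argument is identical in both $\bBan_R$ and $\bBan_R^{\le 1}$; the only point worth noting is that in the contracting category $\Hom(P,-)$ records only contracting morphisms, so the section $\sigma$ produced there is automatically contracting, and the direct summand decomposition is isometric. Since the entire argument reduces to combining a general nonsense splitting lemma with the already-established Proposition \ref{prop:enough_projectives}, there is no substantial obstacle; the only thing to verify carefully is the translation between ``exactness of $\Hom(P,-)$'' in the quasi-abelian sense and surjectivity of $\pi_*$, which is immediate from the fact that strict epimorphisms in the category of abelian groups are precisely the surjections.
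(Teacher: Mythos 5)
Your argument is correct and is exactly the splitting argument the paper has in mind when it says Proposition \ref{prop:enough_projectives} ``immediately implies'' the corollary: lift $\id_P$ along the surjection $\Hom(P, c^0(P)) \to \Hom(P,P)$ induced by the strict epimorphism $c^0(P)\to P$, using that exactness of $\Hom(P,-)$ forces it to carry strict epimorphisms to surjections of abelian groups. The remarks about the contracting case (ultrametric hom-sets being abelian groups, the section being automatically contracting) are accurate and match the paper's setup.
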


Now that we know how projective objects of $\bBan_R$ and $\bBan_R^{\le 1}$ look like we are ready to prove that they are flat.

\begin{prop} \label{prop:projectives_are_flat}
In $\bBan_R$ and $\bBan_R^{\le 1}$ projective objects are flat.\footnote{In our previous related works (like \cite{BaBe}, \cite{BaBeKr2} and \cite{BeKeKr}) flatness of Banach modules has been discussed and the flatness of projective objects has been considered but in those works an object $P \in \bBan_R$ was called flat if the functor $(\minus) \wotimes_R P$ is exact. Therefore, the terminology of this work in not compatible with the one used in those works and the results proved here are strengthenings of the previous ones. From the point of view of this work the older notion of flatness should be considered as a \emph{weak flatness} and is well-suited for computing the derived functor of $\wotimes$, as it was the goal of our previous works. The stronger notion of flatness used in this work is needed for a deeper understanding of the monoidal structures and more refined results like Corollary \ref{cor:monoidal_contracing_ban}.}
\end{prop}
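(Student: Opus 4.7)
The plan proceeds by a two-step reduction. First, by Corollary \ref{cor:enough_projectives} every projective object of $\bBan_R$ (resp.\ of $\bBan_R^{\le 1}$) is a retract of some topologically free module $c^0(X)$, and flatness in the sense of Definition \ref{defn:flat} is manifestly stable under retracts because the formation of kernels of arbitrary morphisms commutes with direct summands. Hence it is enough to establish that $c^0(X)$ is flat for every normed set $(X,|-|_X)$.

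Second, I would establish the natural isomorphism
\[
M \wotimes_R c^0(X) \;\cong\; {\coprod_{x \in X}}^{\le 1} M_{|x|_X} \;=:\; c^0(X,M),
\]
where the right-hand side is the Banach $R$-module of families $(m_x)_{x \in X}$ with $|x|_X \cdot |m_x|_M \to 0$ along the filter of cofinite subsets of $X$, equipped with the supremum norm. This identification follows from the defining formula \eqref{eq:topologically_free} together with the fact that $(-) \wotimes_R (-)$ is a left adjoint, by Proposition \ref{prop:banach_closed_monoidal}, and therefore commutes with the contracting coproduct, combined with the obvious identification $M \wotimes_R R_{|x|_X} \cong M_{|x|_X}$.

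Third, I apply Proposition \ref{prop:right_exact_mono}(2) to the functor $F := (-) \wotimes_R c^0(X)$. This functor is strictly right exact, being a left adjoint and hence preserving all cokernels, so it suffices to check that $F$ sends monomorphisms to monomorphisms and strict monomorphisms to strict monomorphisms. Under the identification above, $F$ carries a bounded (resp.\ contracting) map $\phi : N \to M$ to the coordinatewise map $(n_x) \mapsto (\phi(n_x))$. Injectivity is then obvious, and if $|\phi(n)|_M \ge C |n|_N$ for some $C > 0$ (respectively, $|\phi(n)|_M = |n|_N$ in the contracting setting), the same estimate on the sup norm shows that $F(\phi)$ is a strict monomorphism. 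The argument applies verbatim in both $\bBan_R$ and $\bBan_R^{\le 1}$.

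The main obstacle I anticipate is the identification $M \wotimes_R c^0(X) \cong c^0(X,M)$: commuting the completed projective tensor product past a contracting coproduct is clean from the adjunction standpoint, but verifying the formula at the level of underlying Banach modules requires checking that the projective tensor norm collapses to the supremum norm on elementary sums of the form $\sum_x m_x \otimes e_x$. The upper bound is immediate from the definition of the projective norm and the ultrametric inequality, while the lower bound follows by pairing with the contracting coordinate evaluations together with the universal property defining ${\coprod}^{\le 1}$. Once this identification is in place the rest of the argument is essentially formal.
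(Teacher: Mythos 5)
Your proposal is correct and follows essentially the same route as the paper's own proof: reduction to the topologically free modules $c^0(X)$ via Corollary \ref{cor:enough_projectives}, the identification $M \wotimes_R c^0(X) \cong {\coprod}^{\le 1} M_{|x|_X}$ obtained from the left-adjoint property of the tensor product, and then Proposition \ref{prop:right_exact_mono} to reduce flatness to the preservation of monomorphisms and strict monomorphisms, which is checked coordinatewise. Your explicit invocation of part (2) of Proposition \ref{prop:right_exact_mono} and your attention to the collapse of the projective tensor norm to the coproduct norm are slightly more careful than the paper's wording, but the argument is the same.
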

\begin{proof}
By Corollary \ref{cor:enough_projectives} we need to check only that projective objects of the form $c^0(X)$, for some normed set $(X, |\minus|_X)$, are flat. 
Since $(\minus) \wotimes_R (\minus)$ is a left adjoint functor it is a right exact functor. Hence, by Proposition \ref{prop:right_exact_mono} to check that $c^0(X)$ is flat we only need to check that the functor $(\minus) \wotimes_R c^0(X)$ preserves monomorphisms and strict monomorphisms of $\bBan_R$ and $\bBan_R^{\le 1}$. 

Let $f: M \to N$ be a monomorphism in $\bBan_R$ or $\bBan_R^{\le 1}$, in both cases this means that $f$ is an injective map. Then, since the functor $\wotimes_R$ commutes with contracting colimits (as it is a left adjoint functor)
\[ c^0(X) \wotimes_R M = {\coprod_{\star \ne x \in X}}^{\le 1} R_{|x|_X} \otimes_R M = {\coprod_{\star \ne x \in X}}^{\le 1} M_{|x|_X} \]
where the notation is self-explanatory enough, and the same for $N$. Therefore, from this explicit description, it is clear that the induced map
\[ \id \wotimes f: c^0(X) \wotimes_R M \to c^0(X) \wotimes_R N \]
is injective. Now, let us suppose that $f$ is isometric onto its image, \ie it is a strict monomorphism in $\bBan_R^{\le 1}$. By the isomorphism
\[ c^0(X) \wotimes_R M \cong {\coprod_{\star \ne x \in X}}^{\le 1} M_{|x|_X} \]
we have an explicit description of the norm of $c^0(X) \wotimes_R M$ as the coproduct norm. It is then easy to check that if $f$ is an isometry on each factor of the coproduct the map $\id \wotimes f$ is an isometry too. The same is true if the norm on $M$ is supposed to be equivalent to the norm of $N$ restricted via the injection $f$, \ie if $f$ is supposed to be a strict monomorphism in $\bBan_R$ then $\id \wotimes f$ is a strict monomorphism too. Hence, projective objects of $\bBan_R$ and $\bBan_R^{\le 1}$ are flat.
\end{proof}

\begin{cor} \label{cor:projectives_are_acyclic}
All projective objects of $\bBan_R$ and $\bBan_R^{\le 1}$ are $\wotimes$-acyclic.
\end{cor}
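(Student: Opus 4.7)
The plan is to deduce this directly from the strict exactness established in Proposition~\ref{prop:projectives_are_flat} combined with the general machinery of Corollary~\ref{cor:acyclic_object}. Recall that the statement we must prove is that for a projective object $P$ (in $\bBan_R$ or $\bBan_R^{\le 1}$) the derived functor $(\minus) \wotimes_R^\L P$ agrees with the underived functor $(\minus) \wotimes_R P$, i.e.\ $M \wotimes_R^\L P \cong M \wotimes_R P$ in the derived category for every $M$.

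First, I would fix a projective object $P$ and consider the endofunctor
\[ F_P := (\minus) \wotimes_R P : \bC \longrightarrow \bC, \]
where $\bC$ is either $\bBan_R$ or $\bBan_R^{\le 1}$. By Proposition~\ref{prop:projectives_are_flat}, $F_P$ is strictly exact. Next, I would invoke Corollary~\ref{cor:acyclic_object}: applied to $F_P$ it yields $\LH^n(\L F_P) = 0$ for all $n \ne 0$, and $\LH^0(\L F_P)$ restricts to $F_P$ on $\bC$. Since an object of $D(\bC)$ is zero precisely when all its left cohomology objects $\LH^n$ vanish, this means that $\L F_P(M) \cong F_P(M) = M \wotimes_R P$ in $D(\bC)$ for every $M \in \bC$, which is exactly the assertion of $\wotimes$-acyclicity of $P$.

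The only mildly non-trivial point is to justify that the left derived functor $\L F_P$ is well defined, for which it is enough to exhibit an $F_P$-projective class in the sense recalled earlier in the text. By Proposition~\ref{prop:enough_projectives} every object of $\bC$ admits a strict epimorphism from some $c^0(X)$, and since projective objects are flat (Proposition~\ref{prop:projectives_are_flat}) the class of projectives is closed under the extension and exactness conditions in the definition of an $F_P$-projective subcategory. Hence $\L F_P$ exists and can be computed via projective resolutions, so the application of Corollary~\ref{cor:acyclic_object} is legitimate.

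There is no real obstacle here: the technical work has already been carried out in proving that projective objects are flat, and the present corollary is essentially a formal consequence via the abstract acyclicity criterion in the left heart. If anything, the subtle point worth stressing in the write-up is the terminological one highlighted in the footnote to Proposition~\ref{prop:projectives_are_flat} — namely, that the stronger notion of flatness (preservation of kernels of arbitrary, not only strict, morphisms) is exactly what Corollary~\ref{cor:acyclic_object} requires in order to upgrade ``flat'' to ``$\wotimes$-acyclic'', so the corollary would fail with the weaker notion of flatness used in the authors' earlier works.
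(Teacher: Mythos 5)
Your proposal is correct and follows essentially the same route as the paper: the strict exactness of $(\minus) \wotimes_R P$ from Proposition \ref{prop:projectives_are_flat} is fed into Corollary \ref{cor:acyclic_object} to conclude acyclicity. The extra remarks on the existence of $\L F_P$ via the projective class and on the strict-versus-weak flatness distinction are sound but not a different argument.
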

\begin{proof}
Proposition \ref{prop:projectives_are_flat} implies that for all projective $P \in \bBan_R$ (resp. $P \in \bBan_R^{\le 1}$) the functor $(\minus) \wotimes_R P$ is strictly exact. Therefore, by Corollary \ref{cor:acyclic_object} $P$ is $\wotimes$-acyclic.
\end{proof}

This corollary immediately implies the following.

\begin{cor} \label{cor:monoidal_contracing_ban}
The inclusion functor $\bBan_R^{\le 1} \to \LH(\bBan_R^{\le 1})$ is lax monoidal and its adjoint is strictly monoidal.
\end{cor}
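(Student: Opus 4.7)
The plan is to deduce this corollary as a direct application of Proposition \ref{prop:lax_monoidal}, which states that whenever a closed symmetric monoidal quasi-abelian category $\bC$ has enough $\wotimes$-acyclic objects, the adjunction $i: \bC \rightleftarrows \LH(\bC): c$ is such that $i$ is lax monoidal and $c$ is strongly monoidal. Hence the only thing to check is that $\bBan_R^{\le 1}$ satisfies the hypothesis of Proposition \ref{prop:lax_monoidal}.

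First I would invoke Proposition \ref{prop:enough_projectives}, which guarantees that $\bBan_R^{\le 1}$ has enough projective objects: for every $M \in \bBan_R^{\le 1}$ the canonical map $c^0(M) \to M$ is a strict epimorphism with $c^0(M)$ projective. Next I would combine this with Corollary \ref{cor:projectives_are_acyclic}, which asserts that every projective object of $\bBan_R^{\le 1}$ is $\wotimes$-acyclic. These two facts together immediately say that $\bBan_R^{\le 1}$ has enough $\wotimes$-acyclic objects, since any strict epimorphism from a projective is in particular a strict epimorphism from a $\wotimes$-acyclic object.

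Once this hypothesis is in place, applying Proposition \ref{prop:lax_monoidal} to $\bC = \bBan_R^{\le 1}$ (equipped with the closed symmetric monoidal structure $\wotimes_R$ established in Proposition \ref{prop:banach_closed_monoidal}, whose restriction to contracting morphisms is standard) produces exactly the statement: the embedding $i: \bBan_R^{\le 1} \to \LH(\bBan_R^{\le 1})$ is lax monoidal, and its left adjoint $c: \LH(\bBan_R^{\le 1}) \to \bBan_R^{\le 1}$ is strongly monoidal. There is no real obstacle here; the entire content of the corollary has been absorbed into the preceding two results, and the proof reduces to citing them and verifying the hypothesis ``enough $\wotimes$-acyclic objects.'' The only subtlety worth flagging explicitly is that Proposition \ref{prop:lax_monoidal} was stated for an abstract $\bC$, so I would briefly remark that all the structural requirements (closed symmetric monoidal, quasi-abelian, complete and cocomplete, enough projectives) are indeed present for $\bBan_R^{\le 1}$, having been verified earlier in this section.
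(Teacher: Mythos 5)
Your proposal is correct and follows exactly the paper's own argument: the paper likewise deduces from Corollary \ref{cor:projectives_are_acyclic} (together with the existence of enough projectives) that $\bBan_R^{\le 1}$ has enough $\wotimes$-acyclic objects and then applies Proposition \ref{prop:lax_monoidal}. Your version merely spells out the intermediate step of citing Proposition \ref{prop:enough_projectives} explicitly, which is fine.
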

\begin{proof}
By Corollary \ref{cor:projectives_are_acyclic} $\bBan_R^{\le 1}$ has enough $\wotimes$-acyclic objects, then we can apply Proposition \ref{prop:lax_monoidal}.
\end{proof}

Proposition \ref{prop:projectives_are_flat} has the following direct consequence.

\begin{prop} \label{prop:tate_flat}
The Banach rings $T_R(\rho_1, \ldots, \rho_n)$ are flat over $R$.
\end{prop}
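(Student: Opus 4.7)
The plan is to observe that the Tate algebra is a topologically free Banach $R$-module of the form $c^0(X)$ introduced in \eqref{eq:topologically_free}, and then to invoke Propositions \ref{prop:c0_projective} and \ref{prop:projectives_are_flat}.

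Concretely, I would take $X = \N^n \sqcup \{\star\}$ as a pointed set with basepoint $\star$, endowed with the norm $|(i_1, \ldots, i_n)|_X = \rho_1^{i_1} \cdots \rho_n^{i_n}$ (and $|\star|_X = 0$), and identify the generator of $c^0(X)$ indexed by $(i_1, \ldots, i_n)$ with the monomial $T_1^{i_1} \cdots T_n^{i_n}$. Unpacking Definition \ref{defn:Tate_algebra} inductively, the elements of $T_R(\rho_1, \ldots, \rho_n)$ are exactly the families $(a_{i_1, \ldots, i_n}) \in R^{\N^n}$ with $|a_{i_1, \ldots, i_n}| \rho_1^{i_1} \cdots \rho_n^{i_n} \to 0$ (as $i_1 + \cdots + i_n \to \infty$), equipped with the supremum norm; this matches verbatim the defining property of an element of $c^0(X) = \coprod^{\le 1}_{x \ne \star} R_{|x|_X}$. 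This yields an isometric $R$-linear isomorphism $c^0(X) \cong T_R(\rho_1, \ldots, \rho_n)$ in $\bBan_R^{\le 1}$.

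Once this identification is in place, Proposition \ref{prop:c0_projective} tells us that $c^0(X)$ is projective both in $\bBan_R$ and in $\bBan_R^{\le 1}$, and Proposition \ref{prop:projectives_are_flat} then shows that the functor $(\minus) \wotimes_R T_R(\rho_1, \ldots, \rho_n)$ is strictly exact, which is the flatness asserted in Definition \ref{defn:flat}. I do not expect any serious obstacle: the only point to verify carefully is the identification with $c^0(X)$, which is a routine rewriting of definitions, and one should note that the ring structure on the Tate algebra plays no role in the statement — flatness is a property of the underlying Banach $R$-module only.
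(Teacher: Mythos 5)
Your proposal is correct and follows exactly the paper's own argument: identify $T_R(\rho_1,\ldots,\rho_n)$ with $c^0(\N^n, |\minus|_{\rho_1,\ldots,\rho_n})$, conclude projectivity from Proposition \ref{prop:c0_projective}, and then flatness from Proposition \ref{prop:projectives_are_flat}. The only difference is that you spell out the identification (which the paper leaves as ``easy to check''), and your closing remark that only the underlying Banach module structure matters is accurate.
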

\begin{proof}
It is easy to check that
\[ T_R(\rho_1, \ldots, \rho_n) \cong c^0((\N^n, |\minus|_{\rho_1, \ldots, \rho_n}) \]
where $|\minus|_{\rho_1, \ldots, \rho_n}$ is the norm
\[ |(m_1, \ldots, m_n)|_{\rho_1, \ldots, \rho_n} = \rho_1^{m_1} \cdots \rho_1^{m_n}, \]
therefore $T_R(\rho_1, \ldots, \rho_n)$ is a projective Banach $R$-module.
\end{proof}

We have already mentioned that the completion functor $\what{(\minus)}: \bNr_R \to \bBan_R$ is not strictly exact. A consequence of this fact is the following remark.

\begin{rmk} \label{rmk:filtered_colimits_not_exact}
In $\bBan_R^{\le 1}$ filtered colimits are not strictly exact, in general. Indeed, if $\{ M_i \}_{i \in I}$ is a contracting directed system of Banach $R$-modules, then $\underset{i \in I}{\limind}^{\le 1} M_i$ can be computed by applying the functor $\what{(\minus)}$ to the colimit computed in $\bNr_R$. Although colimits in $\bNr_R$ are strictly exact, the functor $\what{(\minus)}$ destroys the strict left exactness.
\end{rmk}

\subsection{The category of bornological modules}

For a Banach ring\footnote{In this section there is no reason to restrict the discussion to non-Archimedean Banach rings but we still keep this hypothesis for consistency with the rest of paper.} $R$ we can consider the category $\bInd(\bBan_R)$, the category of ind-objects of $\bBan_R$. It is easy to see that $\bInd(\bBan_R)$ is quasi-abelian. We call the full subcategory $\bBorn_R \subset \bInd(\bBan_R)$ consisting of essentially monomorphic objects the category of \emph{(complete) bornological $R$-modules} (we will usually omit the adjective complete as we will only consider complete bornological modules in this work). Recall that an object of $\bInd(\bBan_R)$ is called essentially monomorphic if it is isomorphic to a direct system whose system morphisms are monomorphisms.
Again, it is not hard to see that $\bBorn_R$ is quasi-abelian and that both $\bBorn_R$ and $\bInd(\bBan_R)$ come naturally equipped with a closed symmetric monoidal structure induced by the one of $\bBan_R$. The following property is non-obvious because the functor $\bBorn_R \to \bInd(\bBan_R)$ is not a monoidal functor in general.

\begin{prop} \label{prop:ind_born_monoidal_equivalent}
The categories $\bInd(\bBan_R)$ and $\bBorn_R$ are tensor derived equivalent. 
\end{prop}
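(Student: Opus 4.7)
The plan is to exploit the reflection $d \colon \bInd(\bBan_R) \to \bBorn_R$ left adjoint to the fully faithful inclusion $i \colon \bBorn_R \hookrightarrow \bInd(\bBan_R)$. By the very definition of the tensor product on $\bBorn_R$, namely $M \ootimes_R^{\bBorn} N := d(i(M) \ootimes_R^{\bInd} i(N))$, the reflection $d$ is strongly monoidal, while $i$ is typically only lax monoidal. The task is therefore to upgrade the underived strong monoidality of $d$ to the derived level and to show that the associated $\L d$ is an equivalence compatible with derived tensor products.

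First I would recall (or reprove) the derived equivalence. Since $i$ is exact and fully faithful, it induces a fully faithful triangulated functor $D(\bBorn_R) \to D(\bInd(\bBan_R))$. To produce a quasi-inverse, I would verify that every object of $D(\bInd(\bBan_R))$ is quasi-isomorphic to a complex of essentially monomorphic objects. This follows from resolving by the projective generators $c^0(X)$ of $\bInd(\bBan_R)$ provided by Proposition \ref{prop:enough_projectives}, which already lie in $\bBan_R \subset \bBorn_R$. The left derived functor $\L d$, computed on such resolutions, is the required quasi-inverse.

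The strong monoidality at the derived level is the main point. By Proposition \ref{prop:projectives_are_flat} the generators $c^0(X)$ are flat, hence $\wotimes$-acyclic, in $\bBan_R$, and this property persists in both $\bBorn_R$ and $\bInd(\bBan_R)$. For two such projective Banach modules $P, Q$ one has
\[
i(P) \ootimes_R^{\bInd} i(Q) \cong P \wotimes_R Q,
\]
and since $P \wotimes_R Q \in \bBan_R \subset \bBorn_R$ is already essentially monomorphic, this coincides with $i(P \ootimes_R^{\bBorn} Q)$. Hence $d$ preserves the underived tensor products of resolutions by projective Banach modules on the nose. Passing to the derived level, where the derived tensor products are computed by such resolutions in both categories, $\L d$ acquires a canonical strong monoidal structure, yielding the desired tensor derived equivalence.

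The main obstacle is the derivability of $d$ itself: since reflections of this type are typically not strictly exact (compare the completion functor $\what{(\minus)}$ and Remark \ref{rmk:filtered_colimits_not_exact}), one must take care that $\L d$ is well defined and that resolutions by projective Banach modules are both $d$-acyclic and $\wotimes$-acyclic in $\bInd(\bBan_R)$. Once these two acyclicity statements are established, the monoidal compatibility reduces to the elementary observation that the underived projective tensor product of Banach modules lives in $\bBan_R$ and so coincides in $\bBorn_R$ and in $\bInd(\bBan_R)$.
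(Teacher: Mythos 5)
Your proposal follows essentially the same route as the paper: the adjunction $\limind \colon \bInd(\bBan_R) \rightleftarrows \bBorn_R \colon \diss$ (your $d$ and $i$), the fact that the two categories share the same class of flat projective generators $c^0(X)$ living in $\bBan_R$, and the strong monoidality of the left adjoint. The one step you flag as an obstacle --- the derivability of $d$ and the $d$-acyclicity of projective resolutions --- is exactly where the paper's argument is carried by a single observation you did not make: filtered colimits in $\bBorn_R$ are strictly exact (in contrast to $\bBan_R^{\le 1}$, cf.\ Remark \ref{rmk:filtered_colimits_not_exact}), so $\limind$ is itself strictly exact and by Corollary \ref{cor:acyclic_object} derives trivially, with no separate acyclicity verification needed; your analogy with the completion functor points in the wrong direction here. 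Two smaller remarks: the monoidality of $d$ on the nose (not just on projective Banach modules) already follows from $\limind$ commuting with the ind-tensor product, and the argument as you and the paper both set it up first yields an equivalence only on $D^-$; the paper then extends to the unbounded derived categories by writing $X \cong \limind_n \tau_L^{\le n}(X)$ and using that $\wotimes_R^\L$ commutes with these colimits, a step your proposal omits.
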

\begin{proof}
The classes of projective objects of $\bBorn_R$ and $\bInd(\bBan_R)$ agree and, reasoning like Proposition \ref{prop:enough_projectives} one can see that they have enough projective flat objects (\cf  \cite[Lemma 3.29]{BaBe} for a description of the class of projective objects of $\bInd(\bBan_R)$ that, incidentally, are objects of $\bBorn_R$). There is a pair of adjoint functors
\[ \limind: \bInd(\bBan_R) \leftrightarrows \bBorn_R :\diss  \]
where $\diss$ is just the inclusion and $\limind$ computes the direct limit of the ind-objects in $\bBorn_R$. Since filtered colimits in $\bBorn_R$ are strictly exact, the functor $\limind$ canonically induces a derived functor $\L \limind$ that has no higher derived functors for the left t-structure and moreover $\limind$ is a monoidal functor. Therefore, this gives a tensor triangulated equivalence $D^-(\bBorn_R) \cong D^-(\bInd(\bBan_R))$. To promote this equivalence to an equivalence between the whole derived categories we notice that any object $X \in D(\bBorn_R)$ can be written as
\[ X \cong \limind_{n \in \N} \tau_L^{\le n}(X) \]
where $\tau_L^{\le n}$ denotes the $n$-th truncation functor for the left t-structure. Since $\wotimes_R^\L$ commutes with direct limits and $\R \diss$ is a triangulated functor (actually a triangulated equivalence) we get that for all $X, Y \in D(\bBorn_R)$
\[ \limind (X \wotimes_R^\L Y) \cong \limind (\limind_{n \in \N} \tau_L^{\le n}(X) \wotimes_R^\L \limind_{n \in \N} \tau_L^{\le n}(Y)) \cong \limind_{n \in \N} \limind ( \tau_L^{\le n}(X) \wotimes_R^\L \tau_L^{\le n}(Y)) \cong \]
\[ \cong \limind_{n \in \N} \limind ( \tau_L^{\le n}(X)) \wotimes_R^\L \limind_{n \in \N} \limind(\tau_L^{\le n}(Y)) \cong \limind (X) \wotimes_R^\L \limind (Y). \]
\end{proof}

Proposition \ref{prop:ind_born_monoidal_equivalent} can be interpreted by saying that the derived geometries relative to $\bInd(\bBan_R)$ and relative to $\bBorn_R$ (in the sense of Section \ref{sec:quasi_abelian}) are equivalent. We choose to work with $\bBorn_R$ as it is a more manageable category. Moreover, since $\bBorn_R$ is a symmetric monoidal category it makes sense to consider the category $\bComm(\bBorn_R)$ of algebras over $\bBorn_R$ that we call the category of \emph{bornological algebras over $R$}. In this way, to any bornological ring it is possible to associate its category of bornological modules that is canonically a closed symmetric monoidal quasi-abelian category. We now give some relevant examples of bornological rings.

\begin{exa}
\begin{enumerate}
\item Banach rings and modules are particular cases of bornological rings and modules.
\item In the case when the base ring is a (non-trivially) valued field $k$ our definition of bornological $k$-modules is equivalent to the classical definition of (complete) bornological spaces of convex type over $k$ (see \cite{PS}, \cite{Bam} and \cite{BaBe} for more information about this equivalence of notions).
\item Many topological algebras that appear in literature can be seen canonically as bornological algebras as Fr\'echet algebras, direct limit of Banach algebras, and more. Moreover, for (essentially all of) these algebras the bornological and topological point of view are essentially equivalent.
\item In \cite{BaBeKr2} there are several examples of Fr\'echet-like algebras appearing in arithmetic that are not defined over any base field.
\end{enumerate}
\end{exa}

Proposition \ref{prop:tate_flat} has the following direct consequences.

\begin{prop} \label{prop:dagger_open_flat}
Let $0 \le \rho \le \infty$. The bornological algebras 
\[ \limind_{\rho' > \rho} T_{\Z_\triv}(\rho') = \Z_\triv \lt \rho^{-1} T \gt^\dagger \]
and
\[ \limpro_{\rho' < \rho} T_{\Z_\triv}(\rho') = \Z_\triv \lt \rho^{-1} T \gt^\circ \]
are flat over $\Z_\triv$.
\end{prop}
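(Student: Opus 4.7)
The plan is to treat the two algebras separately, in both cases reducing to Proposition \ref{prop:tate_flat}, which already gives flatness of each individual Tate algebra $T_{\Z_\triv}(\rho')$ over $\Z_\triv$. Recall that flatness in the sense of Definition \ref{defn:flat} means that the functor $(\minus)\wotimes_{\Z_\triv}(\minus)$ with the algebra is strictly exact.

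For the dagger algebra $\Z_\triv\lt\rho^{-1}T\gt^\dagger = \limind_{\rho'>\rho} T_{\Z_\triv}(\rho')$, the system $\{T_{\Z_\triv}(\rho')\}_{\rho'>\rho}$ is essentially monomorphic: the transition maps are the contracting inclusions $T_{\Z_\triv}(\rho'')\rhook T_{\Z_\triv}(\rho')$ for $\rho<\rho'<\rho''$, so the colimit is well-defined as a bornological algebra. Since the completed projective tensor product is a left adjoint (Proposition \ref{prop:banach_closed_monoidal}, extended to $\bBorn_{\Z_\triv}$), it commutes with this filtered colimit, giving
\[
(\minus)\wotimes_{\Z_\triv}\limind_{\rho'>\rho}T_{\Z_\triv}(\rho')\cong\limind_{\rho'>\rho}\bigl((\minus)\wotimes_{\Z_\triv}T_{\Z_\triv}(\rho')\bigr).
\]
Each term on the right is strictly exact by Proposition \ref{prop:tate_flat}; and in contrast to the situation in $\bBan_{\Z_\triv}^{\le 1}$ recorded in Remark \ref{rmk:filtered_colimits_not_exact}, filtered colimits in $\bBorn_{\Z_\triv}$ are strictly exact (this is precisely the property exploited in the proof of Proposition \ref{prop:ind_born_monoidal_equivalent}). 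Hence the colimit is strictly exact, and $\Z_\triv\lt\rho^{-1}T\gt^\dagger$ is flat.

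For the open-disk algebra $\Z_\triv\lt\rho^{-1}T\gt^\circ = \limpro_{\rho'<\rho}T_{\Z_\triv}(\rho')$ the previous strategy breaks down, because tensor products do not commute with projective limits. My plan is to give a direct topologically-free-type description of $\Z_\triv\lt\rho^{-1}T\gt^\circ$ analogous to equation \eqref{eq:topologically_free}: its elements are power series $\sum a_n T^n$ subject to the family of growth conditions $|a_n|_0\,(\rho')^n\to 0$ for every $\rho'<\rho$, and the bornology is the limit bornology. The idea is to exhibit an explicit isomorphism of $(\minus)\wotimes_{\Z_\triv}\Z_\triv\lt\rho^{-1}T\gt^\circ$ with a coordinatewise construction, so that preservation of (strict) monomorphisms follows termwise as in the proof of Proposition \ref{prop:projectives_are_flat}, where strict monomorphisms into a coproduct of rescaled copies of a module were analyzed. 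Concretely, for any $M\in\bBorn_{\Z_\triv}$ one would check that $M\wotimes_{\Z_\triv}\Z_\triv\lt\rho^{-1}T\gt^\circ$ identifies with the bornological module of power series $\sum m_nT^n$ with $m_n\in M$ satisfying a limit-bornology version of the same growth condition; a monomorphism (resp.\ strict monomorphism) $M\to N$ then induces an injective (resp.\ isometric-per-coordinate) map of such spaces.

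The main obstacle is precisely this $\circ$ case: since projective limits of flat modules are \emph{not} flat in general, one must use that the system $\{T_{\Z_\triv}(\rho')\}_{\rho'<\rho}$ is very special — the transition maps are strict monomorphisms between topologically free modules $c^0((\N,|\minus|_{\rho'}))$ (Proposition \ref{prop:c0_projective}), and the limit should admit an explicit ``$c^0$-type'' description for the limit bornology. Verifying that this description is compatible with the completed projective tensor product, and that the coordinatewise check of Proposition \ref{prop:projectives_are_flat} carries over to the bornological Fréchet-like limit, is the technical heart of the argument.
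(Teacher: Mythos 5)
Your treatment of the dagger algebra is correct and coincides with the paper's argument: filtered colimits in $\bBorn_{\Z_\triv}$ are strictly exact and $(\minus)\wotimes_{\Z_\triv}(\minus)$ commutes with them, so flatness passes from the individual Tate algebras (Proposition \ref{prop:tate_flat}) to $\limind_{\rho'>\rho}T_{\Z_\triv}(\rho')$.

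For the projective limit, however, there is a genuine gap. You correctly identify that the tensor product does not commute with $\limpro$, but the step you then defer --- showing that $M\wotimes_{\Z_\triv}\limpro_{\rho'<\rho}T_{\Z_\triv}(\rho')$ admits the asserted coordinatewise power-series description and that injectivity and strictness of monomorphisms can be checked termwise --- is exactly the content of the proposition in this case, and it is not supplied; as written, the second half of the statement remains unproved. The paper closes this gap by invoking nuclearity of the system $\{T_{\Z_\triv}(\rho')\}_{\rho'<\rho}$ in the sense of Appendix A of \cite{BaBeKr2}: a nuclear projective limit can be rewritten as a filtered \emph{colimit} of coproducts, which reduces the second claim to the same strict-exactness-of-$\limind$ argument already used for the dagger algebra. (The paper also records an alternative: write down the Roos complex computing $\R\limpro_{\rho'<\rho}T_{\Z_\triv}(\rho')$ explicitly and check that its left-heart cohomology is concentrated in degree $0$.) Your proposed ``$c^0$-type'' description is morally what nuclearity delivers, but without either that input or an explicit verification that $\wotimes_{\Z_\triv}$ is compatible with this particular limit, the argument does not go through; to complete your route you would need to prove the identification of the tensor product with the space of power series with coefficients in $M$, which is where all the difficulty is concentrated.
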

\begin{proof}
Since the functor $\underset{\rho' > \rho}\limind$ is strictly exact, then the first claim follows immediately from Proposition \ref{prop:tate_flat}. The second claim follows from the fact that the projective limit $\underset{\rho' < \rho}\limpro$ is nuclear in the sense of Appendix A of \cite{BaBeKr2} and hence it can be written as direct limit of coproducts and hence it is strictly exact. Alternatively, it is not hard to write down explicitly the Roos complex computing $\R \underset{\rho' < \rho}\limpro T_{\Z_\triv}(\rho')$ and to check that its left-heart cohomology is concentrated in degree $0$.
\end{proof}

\section{The homotopical Huber spectrum of Banach and bornological rings}  \label{sec:rational}

In this section we show how to enhance the space $\Spa(R)$, for any Banach algebra\footnote{We keep the simplifying hypothesis of considering only non-Archimedean Banach rings although this hypothesis can be removed with the only problem of paying extra care in some argumentations.} $R$ over a strongly Noetherian Tate ring\footnote{Recall that a Banach ring is called \emph{Tate} if it has a topologically nilpotent unit. We will also refer to such object as Banach-Tate rings.}, to a space that can be equipped with a structural derived sheaf of simplicial Banach algebras. This structural derived sheaf agrees with the usual structural sheaf of $\Spa(R)$ when $R$ lies in a class known of well-behaved Banach algebras, like the stably uniform of Buzzard-Mihara-Verberkmoes. The fact that in general one obtains simplicial Banach algebras instead of Banach algebras is a consequence of the fact that the functor $\limind^{\le 1}$ is not strictly exact.

This section is divided in three parts: in the first part we review (and reinterpret) the classical theory of affinoid algebras over a strongly Noetherian Tate ring, then we explain how to extend the theory to any Banach algebra and finally we show how these results can be further generalized to more general bornological rings.

Before discussing our results, we briefly recall some basic definitions of the theory of the Huber spaces associated to a non-Archimedean Banach ring. Let $(R, |\minus|)$ be a Banach ring. Let $|\minus|_{\sup}$ be the spectral norm of $R$. To the pair $(R, |\minus|_{\sup})$ one can associate the Huber ring 
\[ \sR = (R, R^\circ) \]
where 
\[ R^\circ = \{ r \in R | |r|_{\sup} \le 1 \}. \]
is the set of power-bounded elements of $R$. This association is functorial and permits to associate to $(R, |\minus|)$ the affinoid adic space associated to $\sR$ that we will denote by $\Spa(\sR)$ or simply $\Spa(R)$. The points of $\Spa(R)$ are equivalence classes of continuous semi-valuations $v: R \to \Gamma$ to (pointed) ordered abelian groups such that $v(x) \le 1$ for all $x \in R^\circ$ (where we use the multiplicative notation for $\Gamma$). The topology of $\Spa(R)$ is generated by subsets of the form 
\begin{equation} \label{eq:definition_rational}
 \{ v \in \Spa(R) | v(f_i) \le v(f_0) \ne 0, f_0, \ldots, f_n \in R, (f_0, \ldots, f_n) = R \}
\end{equation} 
that are called \emph{rational domains}.

\subsection{Localizations of affinoid algebras over strongly Noetherian Tate rings} \label{sec:strongly_noetherian}

In this section we establish some basic facts about affinoid algebras over strongly Noetherian Tate rings. We reinterpret well-known results of Huber in the language of homological algebra over Banach algebras, generalizing our results of \cite{BeKr}, \cite{BaBe}, proved for affinoid algebras over a valued field. The main result of this section is the interpretation of rational localizations as Koszul commutative dg-algebras concentrated in degree $0$.

So, in this section we fix once for all $A$ to be a strongly Noetherian Tate ring. We do not ask $A$ to be defined over a valued field. Recall that a Banach ring is said to be strongly Noetherian if for any $n \in \N$ the Banach algebra $A \lt X_1, \ldots X_n \gt$ is Noetherian.
We give a new perspective to the theory of affinoid rational localizations by presenting them via Koszul resolutions providing simple and explicit flat resolutions of the algebras of analytic functions on rational subdomains of an affinoid adic space. Let us briefly recall what affinoid algebras over $A$ are.

\begin{defn} \label{defn:affinoid_algebra}
An \emph{affinoid} algebra over $A$ is a Banach $A$-algebra $R$ for which there exists an isomorphism of Banach algebras
\[ R \cong \frac{A \lt X_1, \ldots, X_n \gt }{I} \]
where the algebra on the right hand side is equipped with the quotient semi-norm.
\end{defn}

The next lemma ensures that Definition \ref{defn:affinoid_algebra} makes sense.

\begin{lemma} \label{lemma:closed_ideals}
Let $A$ be as above, then for all $n \in \N$ the ideals of $A \lt X_1, \ldots, X_n \gt$ are closed.
\end{lemma}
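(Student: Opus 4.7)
My plan is to reduce the closedness of ideals in $R := A\lt X_1, \ldots, X_n \gt$ to a standard Krull-type intersection theorem applied on a Noetherian ring of definition. The strong Noetherian hypothesis on $A$ directly provides that $R$ itself is Noetherian, so any ideal $I \subset R$ is finitely generated, say by $f_1, \ldots, f_k$. The remaining content of the lemma is then that every such finitely generated ideal is closed in the Tate topology of $R$.

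The strategy proceeds via a ring of definition. Fix a ring of definition $A_0 \subset A$ and a topologically nilpotent unit $t \in A_0$ so that the topology of $A$ is the $t$-adic topology induced from $A_0$ and $A = A_0[1/t]$. After multiplying the generators $f_i$ by sufficiently large powers of $t$ we may assume $f_i \in R_0 := A_0\lt X_1, \ldots, X_n\gt$, and setting $J := (f_1, \ldots, f_k) R_0$ one has $I = J R = J[1/t]$. The ring $R_0$ serves as a ring of definition of $R$; it is $t$-adically complete (being the $t$-adic completion of $A_0[X_1, \ldots, X_n]$), and $t$ lies in the Jacobson radical of $R_0$ because geometric series in $t$ converge.

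The key step is then the following: assuming $A_0$, and hence $R_0$, is Noetherian, the Artin--Rees lemma combined with Krull's intersection theorem yields $\bigcap_{m \ge 0} t^m (R_0/J) = 0$, so that $R_0/J$ is Hausdorff in its $t$-adic topology. This transfers to the assertion that $R/I = (R_0/J)[1/t]$ is Hausdorff in its Tate topology, which is equivalent to $I$ being closed in $R$.

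The main obstacle I expect is justifying the Noetherianity of an integral ring of definition $R_0$: the strong Noetherian hypothesis concerns the topological algebras $A\lt X_1, \ldots, X_n\gt$ rather than any integral subring of $A$. Resolving this amounts to invoking Huber's result that a strongly Noetherian Tate ring always admits a Noetherian ring of definition (see \cite{Hub}); once this is in hand, the preceding argument goes through without further complications. An alternative route avoiding the passage through $R_0$ would apply the open mapping theorem directly to the presentation $R^k \to I$ in the category of Banach $R$-modules and show that the quotient seminorm on $R/I$ is actually a norm, but in this setting this is essentially equivalent to the intersection-theoretic argument above.
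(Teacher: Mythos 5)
Your overall strategy (Krull intersection on a Noetherian ring of definition) founders on exactly the step you flag as the main obstacle: there is no theorem of Huber asserting that a strongly Noetherian Tate ring admits a Noetherian ring of definition, and the statement is in fact false. Already for $A = \what{\ol{\Q}}_p$, or for any affinoid algebra over a non-discretely valued field, every ring of definition is an open bounded subring of the ring of power-bounded elements, and none of these is Noetherian: the ideal of topologically nilpotent elements is not finitely generated because the value group is divisible. Yet such $A$ are strongly Noetherian by Tate's theorem, so they lie squarely within the scope of the lemma. The true implication runs the other way: a Tate ring possessing a Noetherian ring of definition is strongly Noetherian (since $A_0 \lt X_1, \ldots, X_n \gt$ is then a Noetherian ring of definition of $A \lt X_1, \ldots, X_n \gt$), but the converse fails, so your argument only covers a proper subclass of the rings the lemma concerns --- essentially the discretely valued case, where the Artin--Rees computation you describe is indeed correct.

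The paper does not argue this way; it simply cites \cite[Theorem 2.2.8]{Ked1} together with Henkel's open mapping theorem \cite{Hen}. The argument behind that citation avoids rings of definition entirely: since $R = A \lt X_1, \ldots, X_n \gt$ is Noetherian, the closure $\ol{I}$ of an ideal $I$ is finitely generated, say by $g_1, \ldots, g_m$; the open mapping theorem (available by \cite{Hen} because $R$ has a zero sequence of units) applied to the surjection $R^m \to \ol{I}$ shows that every $x \in \ol{I}$ can be written as $\sum_j a_j g_j$ with $|a_j| \le C|x|$; density of $I$ in $\ol{I}$ then produces $h_j \in I$ with $g_j - h_j = \sum_l c_{jl} g_l$ and $|c_{jl}| < 1$, and inverting the matrix $1 - c$ by a geometric series exhibits each $g_j$ as an element of $I$, whence $I = \ol{I}$. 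So your second, dismissed alternative (``apply the open mapping theorem directly'') is the one that actually works in general; it is not equivalent to the intersection-theoretic argument, and it must be applied to $R^m \to \ol{I}$ rather than to $R^k \to I$ in order to close the loop.
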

\begin{proof}
It is well known that the ideals of Noetherian non-Archimedean rings are closed, \cf \cite[Theorem 2.2.8]{Ked1} and \cite{Hen}. 
\end{proof}

Since all ideals of $A \lt X_1, \ldots, X_n \gt$ are closed all its quotients have a canonical structure of Banach $A$-algebras. Moreover, notice that the isomorphism of Definition \ref{defn:affinoid_algebra} is asked to exists in $\bComm(\bBan_A)$, not in $\bComm(\bBan_A^{\le 1})$, so the algebras considered are isomorphic but not necessarily isometrically isomorphic. We now introduce the notation we will use for the Koszul complexes.

\begin{notation} \label{not:koszul_complexes}
Let $R$ be a Banach ring and $x \in R$ we denote 
\[ K_R(x) = [R \stackrel{\mu_x}{\to} R] \]
the \emph{Koszul complex of $x$}, where the complex is in degree $0$ and $-1$ and the map $\mu_x$ is multiplication by $x$. For $x_1, \ldots, x_n \in R$ we denote
\[ K_R(x_1, \ldots, x_n) \doteq K_R(x_1) \wotimes_R \cdots \wotimes_R K_R(x_n). \] 
\end{notation}

\begin{prop} \label{prop:koszul_simplicial_algebras}
For any Banach ring $R$ and any $x_1, \ldots x_n \in R$ the complex $K_R(x_1, \ldots, x_n)$ has a canonical structure of simplicial Banach algebra.
\end{prop}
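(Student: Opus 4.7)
The plan is to proceed by induction on $n$, exploiting the symmetric monoidal structure on $\bComm(\bSimp(\bBan_R^{\le 1}))$ whose existence as a combinatorial model category is guaranteed by Corollary 3.15 of \cite{BaBeKr2}.

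For the base case $n = 1$, I would realize the simplicial Banach algebra structure on $K_R(x)$ by presenting $K_R(x)$ as a homotopy pushout. Choose any $\rho \geq |x|$ and consider the rescaled Tate algebra $R \lt \rho^{-1} T \gt$, which by Proposition~\ref{prop:tate_flat} is flat over $R$. There are two contracting $R$-algebra homomorphisms $\epsilon_0, \epsilon_x : R \lt \rho^{-1} T \gt \rightrightarrows R$ given by $T \mapsto 0$ and $T \mapsto x$ respectively, and I would set
\[
S(x) \doteq R \wotimes^{\L}_{R \lt \rho^{-1} T \gt} R
\]
for the resulting homotopy pushout. By general model-categorical reasoning this is an object of $\bComm(\bSimp(\bBan_R^{\le 1}))$, and using the projective resolution $[R \lt \rho^{-1} T \gt \xrightarrow{\cdot T} R \lt \rho^{-1} T \gt] \to R$ (which resolves $R$ as an $R \lt \rho^{-1} T \gt$-module via $\epsilon_0$) and then tensoring along $\epsilon_x$ one identifies the underlying object of $S(x)$ in $D^{\leq 0}(\bBan_R^{\le 1})$ with $[R \xrightarrow{\cdot x} R] = K_R(x)$. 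A concrete simplicial incarnation of $S(x)$ is furnished by the two-sided bar construction $B(R, R \lt \rho^{-1} T \gt, R)_\bullet$, whose $m$-simplices are $R \lt \rho^{-1} T_1, \ldots, \rho^{-1} T_m \gt$, with face maps given by augmenting an outer variable via $\epsilon_0$ or $\epsilon_x$ and by multiplying two adjacent variables in the middle, and with degeneracies given by inserting the unit.

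For the inductive step, by Notation~\ref{not:koszul_complexes} we have $K_R(x_1, \ldots, x_n) = K_R(x_1) \wotimes_R \cdots \wotimes_R K_R(x_n)$, and since $\bComm(\bSimp(\bBan_R^{\le 1}))$ is symmetric monoidal, the tensor product of simplicial commutative Banach algebras is again a simplicial commutative Banach algebra. Thus the objects $S(x_i)$ produced in the base case combine into $S(x_1) \wotimes_R \cdots \wotimes_R S(x_n)$, which provides the desired simplicial Banach algebra structure on $K_R(x_1, \ldots, x_n)$; that its underlying complex matches $K_R(x_1, \ldots, x_n)$ in $D^{\leq 0}(\bBan_R^{\le 1})$ follows from the monoidal quasi-abelian Dold-Kan equivalence of Theorem 3.7 of \cite{BaBeKr2}.

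The principal obstacle I anticipate lies in the careful bookkeeping of norms: namely, verifying that the face, degeneracy, and multiplication maps in the bar construction are contracting with respect to the natural projective tensor-product norms on the algebras $R \lt \rho^{-1} T_1, \ldots, \rho^{-1} T_m \gt$. Flatness of these Tate algebras over $R$, combined with the strict exactness of $(\minus) \wotimes_R (\minus)$ on projective objects established in Proposition~\ref{prop:projectives_are_flat}, should suffice to carry this through. A secondary subtlety is that the Dold-Kan equivalence is in general only lax monoidal via the Eilenberg--Zilber shuffle map, so the identification of underlying complexes is a priori only a quasi-isomorphism; however, since the Koszul complexes involved are concentrated in degrees $-1$ and $0$, the shuffle map is an honest isomorphism in the relevant degrees and this discrepancy is harmless.
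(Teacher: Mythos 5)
Your construction does not prove the proposition as stated, and the step you invoke to bridge the gap is incorrect. The proposition (and the way the paper uses it afterwards: ``the dg-algebra structure of $K_R(x_1,\ldots,x_n)$ induces a structure of simplicial Banach ring on its simplicial realization'', with the explicit terms $R\lt X_1,\ldots,X_n\gt$ reappearing in all later computations) asks for a canonical simplicial Banach algebra structure on the complex $K_R(x_1,\ldots,x_n)$ itself, i.e.\ on its Dold--Kan realization. What you build is a different, much larger object: the bar construction $B(R, R\lt\rho^{-1}T\gt,R)_\bullet$ and its tensor powers, which you then identify with $K_R(x_1,\ldots,x_n)$ only in $D^{\le 0}(\bBan_R)$. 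So at best you show that $K_R(x_1,\ldots,x_n)$ is \emph{quasi-isomorphic to} the normalization of some simplicial commutative Banach algebra (and one depending on choices of $\rho\ge|x_i|$ and a submultiplicative renorming so that $\epsilon_x$ is contracting), which is a weaker statement. Your attempted repair --- that the Eilenberg--Zilber shuffle map is ``an honest isomorphism in the relevant degrees'' because the complexes sit in degrees $-1,0$ --- is false: already for two two-term complexes $C=[C^{-1}\to C^0]$ and $D=[D^{-1}\to D^0]$, the normalized complex of $\Gamma(C)\otimes\Gamma(D)$ in degree $-1$ is $C^{-1}\otimes D^0\oplus C^0\otimes D^{-1}\oplus C^{-1}\otimes D^{-1}$, i.e.\ it carries the extra summand $C^{-1}\otimes D^{-1}$ which the shuffle map misses; the comparison is only a chain homotopy equivalence, never an isomorphism. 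Likewise your identification of $S(x)$ with $K_R(x)$ via the resolution $[R\lt\rho^{-1}T\gt\stackrel{\cdot T}{\to}R\lt\rho^{-1}T\gt]$ is an isomorphism in the derived category, not an identification of the underlying simplicial object with $\Gamma(K_R(x))$.

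The paper's proof is far more elementary and delivers exactly the required structure: since $K_R(x)=[R\stackrel{\mu_x}{\to}R]$ is concentrated in degrees $-1$ and $0$, the product of any two degree $-1$ elements lands in degree $-2=0$, so the Leibniz rule is vacuous and $K_R(x)$ is literally a commutative dg Banach algebra (free graded-commutative on one degree $-1$ generator $e$ with $de=x$); a tensor product of commutative dg-algebras is again one, so $K_R(x_1,\ldots,x_n)=K_R(x_1)\wotimes_R\cdots\wotimes_R K_R(x_n)$ is a commutative dg Banach algebra, and the (monoidal) Dold--Kan correspondence for the exact category $\bBan_R$ then equips its simplicial realization with the canonical simplicial Banach algebra structure --- no resolutions, no choice of $\rho$, no passage to the derived category. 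Your homotopy-pushout picture $K_R(x)\simeq R\wotimes^{\L}_{R\lt\rho^{-1}T\gt}R$ is correct and is conceptually aligned with why derived rational localizations are homotopy epimorphisms later in the paper, but as a proof of this particular proposition it overshoots and only reaches the up-to-equivalence statement.
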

\begin{proof}
As mentioned so far the Dold-Kan correspondence holds for $\bBan_R$ (more generally it holds for any exact category, \cf \cite[Corollary 4.69]{Kel}). Therefore, we can canonically obtain a simplicial object out of $K_R(x_1, \ldots, x_n)$ and since a standard argument shows that the associated simplicial set has a canonical structure of a simplicial algebra if $K_R(x_1, \ldots, x_n)$ has a structure of commutative dg-algebra, it is enough to show that $K_R(x_1, \ldots, x_n)$ has the structure of a commutative dg-algebra. Since by definition 
\[ K_R(x_1, \ldots, x_n) =  K_R(x_1) \wotimes_R \cdots \wotimes_R K_R(x_n) \]
and tensor products of commutative dg-algebras are commutative dg-algebras, we just need to check that $K_R(x)$ has a canonical dg-algebra structure for a generic $x \in R$. Since
\[ K_R(x) = [R \stackrel{\mu_x}{\to} R] \]
and the complex is in degrees $-1$ and $0$ we get that for $a,b \in R$, in degree $-1$, one has that necessarily $ab = 0$ and therefore
\[ 0 = (X - T)(a b) = ((X - T)a) b + (-1)^{-1} a ((X - T)b) = 0 \]
proving the claim. 
\end{proof}

For simplicity we will work with Koszul complexes but when we will interpret them as algebras of functions of derived analytic spaces we need to consider their structures of simplicial Banach rings (via the monoidal Dold-Kan correspondence). We will often tacitly use the fact that the dg-algebra structure of $K_R(x_1, \ldots, x_n)$ induces a structure of simplicial Banach ring on its simplicial realization.

\begin{defn} \label{defn:koszul_regular}
We say that the Koszul complex $K_R(x_1, \ldots, x_n)$ is \emph{regular} if $\LH^n(K_R(x_1, \ldots, x_n)) = 0$ for all $n \ne 0$. We say that $K_R(x_1, \ldots, x_n)$ is \emph{strictly regular} if it is regular and $\LH^0(K_R(x_1, \ldots, x_n)) \in \bBan_R$.
\end{defn}

The condition of being Koszul regular means that $K_R(x_1, \ldots, x_n)$ is a free resolution of
\[ \coker(R^n \to R) \cong \frac{R}{(x_1, \ldots, x_n)} \]
where the cokernel is computed in $\LH(\bBan_R)$. Notice that in this case the morphism $R^n \to R$ may not be strict and hence may not have a closed image and therefore the quotient is not a Banach ring, but it makes sense as an object of $\bComm(\LH(\bBan_R))$.
If $K_R(x_1, \ldots, x_n)$ is strictly regular then the morphism $R^n \to R$ is strict and therefore $K_R(x_1, \ldots, x_n)$ is a free resolution of the Banach ring
\[ \frac{R}{(x_1, \ldots, x_n)}. \]

\begin{defn} \label{defn:rational_localizations_affionoid}
Let $R$ be an affinoid algebra over $R$ and $f_0, \ldots, f_n \in R$ be such that $(f_0, \ldots, f_n) = 1$. We define the associated \emph{derived rational localization} as the canonical morphism
\[ R \to K_{R \lt X_1, \ldots, X_n \gt}(f_0 X_1 - f_1, \ldots, f_0 X_n - f_n) \doteq R \l \lt \frac{f_1}{f_0}, \ldots, \frac{f_n}{f_0} \r \gt^h \]
of commutative dg-algebras.
\end{defn}

\begin{rmk}
We use the notation 
\[ R \l \lt \frac{f_1}{f_0}, \ldots, \frac{f_n}{f_0} \r \gt^h \]
to denote the derived rational localizations in order to distinguish them from the classical (underived) rational localizations denoted by
\[ R \l \lt \frac{f_1}{f_0}, \ldots, \frac{f_n}{f_0} \r \gt = c \l( \LH^0 \l ( R \l \lt \frac{f_1}{f_0}, \ldots, \frac{f_n}{f_0} \r \gt^h \r) \r). \]
\end{rmk}

The reader should not be frightened by the appearance of dg-algebras in Definition \ref{defn:rational_localizations_affionoid}. We will soon show that these algebras are concentrated in degree $0$ and agree with the usual definition (up to quasi-isomorphism). But the change of point of view of Definition \ref{defn:rational_localizations_affionoid} will be important later on when derived rational localizations will not be concentrated in degree $0$ for a general Banach ring $R$.

Inside the class of derived rational localizations the following subclasses are important because of their simplicity that often permits to work out explicit computations. 

\begin{defn} \label{defn:weierstrass_laurent_localizations}
A derived rational localization of the form 
\[ R \to K_{R \lt X_1, \ldots, X_n \gt}(X_1 - f_1, \ldots, X_n - f_n) \doteq R \lt f_1, \ldots, f_n \gt^h \]
with $f_1, \ldots, f_n \in R$, is called \emph{derived Weierstrass localization}. A derived rational localization of the form 
\[ R \to R \l \lt \frac{f_1 g_1 \cdots g_m}{g_1 \cdots g_m}, \ldots, \frac{f_n g_1 \cdots g_m}{g_1 \cdots g_m}, \frac{g_2 \cdots g_m}{g_1 \cdots g_m}, \ldots, \frac{g_1 \cdots g_{m-1}}{g_1 \cdots g_m} \r \gt^h \doteq R \lt f_1, \ldots, f_n, g_1^{-1}, \ldots, g_m^{-1} \gt^h \]
where $f_1, \ldots, f_n, g_1, \ldots, g_m \in R$, is called \emph{derived Laurent localization}.
\end{defn}

\begin{prop} \label{prop:laurent_localizations_koszul_regular}
Derived Weierstrass and derived Laurent localizations of affinoid $A$-algebras are Koszul strictly regular.
\end{prop}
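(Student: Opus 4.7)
I plan to handle the two cases separately, using induction on the number of generators and reducing everything to one-variable Koszul complexes, which can be analyzed using the open mapping theorem together with the Noetherianness of Tate algebras.

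For the Weierstrass case, by the multiplicative property in Notation \ref{not:koszul_complexes}, I can decompose
\[ K_{R\langle X_1,\ldots,X_n\rangle}(X_1 - f_1,\ldots,X_n - f_n) \cong K_{R\langle X_1\rangle}(X_1 - f_1) \wotimes_R \cdots \wotimes_R K_{R\langle X_n\rangle}(X_n - f_n), \]
where each factor involves a one-variable Tate algebra, which is flat over $R$ by Proposition \ref{prop:tate_flat}. The base case amounts to analyzing $K_{R\langle X\rangle}(X - f) = [R\langle X\rangle \xrightarrow{\cdot(X-f)} R\langle X\rangle]$. The multiplication-by-$(X-f)$ map is injective, because $X - f$ is monic in $X$ and hence a non-zero-divisor in $R\langle X\rangle$; its image is an ideal, hence closed by Lemma \ref{lemma:closed_ideals} using that $R\langle X\rangle$ is Noetherian (by the strongly Noetherian hypothesis on $A$). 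By the open mapping theorem applied to the continuous bijection $R\langle X\rangle \to (X-f)R\langle X\rangle$, the map is a strict monomorphism, so $\LH^n$ vanishes for $n \neq 0$ and $\LH^0 \cong R\langle X\rangle/(X-f) \in \bBan_R$.

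For the inductive step in the Weierstrass case, the key ingredient is iterated Weierstrass division: every element $h \in R\langle X_1,\ldots,X_n\rangle$ admits a bounded canonical decomposition $h = \sum_{i=1}^n (X_i - f_i) q_i + r$ with $r \in R$, $q_i \in R\langle X_1,\ldots,X_i\rangle$, and norms controlled by $\|h\|$. This produces an explicit bounded contracting homotopy for the Koszul complex against its $\LH^0$, establishing strict exactness in negative degrees. The $\LH^0$ is the Banach algebra $R\langle X_1,\ldots,X_n\rangle/(X_1-f_1,\ldots,X_n-f_n)$, again using Lemma \ref{lemma:closed_ideals}.

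For the Laurent case, I reduce to Weierstrass. The natural candidate to work with is the simpler Koszul complex
\[ K_{R\langle X_1,\ldots,X_n,Y_1,\ldots,Y_m\rangle}\bigl(X_1 - f_1,\ldots,X_n - f_n,\, g_1 Y_1 - 1,\ldots,g_m Y_m - 1\bigr), \]
which by Notation \ref{not:koszul_complexes} factors as a tensor product of the Weierstrass complex (already handled) with one-variable Koszul complexes $K_{R\langle Y\rangle}(gY - 1)$. Each of the latter is strictly regular by the same open-mapping argument as in the Weierstrass base case: $gY - 1$ has constant term $-1$ which is a unit in $R$, hence it is a non-zero-divisor, and its image is closed by Lemma \ref{lemma:closed_ideals}. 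It then remains to identify this simpler Koszul complex with the one from Definition \ref{defn:weierstrass_laurent_localizations}. The point is that in the derived quotient the factors $g_1\cdots g_m$ and $g_1\cdots\widehat{g_j}\cdots g_m$ become units with inverses built from the $Y_j$'s, so rescaling generators by these products preserves the derived quotient.

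The main obstacle, I expect, is precisely this last step: promoting the heuristic "these factors become units, so rescaling does not matter" into an honest quasi-isomorphism of Koszul complexes in the quasi-abelian setting. A clean way may be to exhibit an explicit morphism of Koszul complexes multiplying each generator by the appropriate unit factor, then check it is a quasi-isomorphism by tensoring with the (already-handled) Koszul complex for $g_j Y_j - 1$ so that the rescaling factors become invertible on the nose; alternatively, one can invoke the universal property of the rational localization as a homotopy epimorphism, together with Theorem \ref{thm:derived_Tate}, to conclude that both Koszul presentations compute the same object of $\bComm(\bBan_R)$ up to quasi-isomorphism.
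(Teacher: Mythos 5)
Your overall strategy --- tensor-decompose into one-variable Koszul complexes, prove injectivity of the multiplication map, use Noetherianness and the open mapping theorem to get strictness --- is close to the paper's in spirit, but there are two places where your argument has genuine gaps.

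First, the Weierstrass base case. The claim that $X - f$ is a non-zero-divisor in $R \lt X \gt$ ``because it is monic'' is not justified. Monic polynomials are non-zero-divisors in $R[X]$ by a leading-coefficient argument, but that argument does not transfer to the convergent power series ring $R\lt X\gt$. Writing out the coefficient equations of $(X - f) \sum a_n X^n = 0$ gives $f a_0 = 0$ and $a_{n-1} = f a_n$, hence $a_0 = f^n a_n$ for every $n$; one must then use that $R$ is Noetherian (for instance, that the ascending chain of annihilators of powers of $f$ stabilizes) to conclude $a_0 = 0$ and then, propagating along the recursion, $a_n = 0$ for all $n$. The paper spells this computation out; your version skips precisely the step where Noetherianness does the work. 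Your argument for $g_j Y_j - 1$ is fine, by contrast: the constant coefficient being a unit forces $c_0 = 0$ and the recursion kills everything.

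Second, and more seriously, the Laurent case. You replace the Koszul complex of Definition \ref{defn:weierstrass_laurent_localizations} (whose generators all share the common factor $g_1 \cdots g_m$) by the Koszul complex on the generators $X_i - f_i$ and $g_j Y_j - 1$, and you correctly identify that exhibiting a quasi-isomorphism between the two is the real obstacle. But your proposed fallback --- appealing to the universal property of the rational localization as a homotopy epimorphism together with Theorem \ref{thm:derived_Tate} --- is circular: Proposition \ref{prop:laurent_localizations_homotpy_epi}, which is what establishes that derived Weierstrass and Laurent localizations are homotopy epimorphisms, is itself proved using the present proposition. The paper avoids the issue entirely by running a double induction on $(n,m)$ directly on the Koszul complex of Definition \ref{defn:weierstrass_laurent_localizations}: once the localization at smaller indices is known to be concentrated in degree $0$ and equal to an affinoid ring $R'$, the derived tensor product with one more one-variable Koszul complex collapses (by flatness of Tate algebras) to a one-variable Koszul complex over $R'$, which is again the base case. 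To repair your version you would need either a direct Koszul-regularity computation for the generators of Definition \ref{defn:weierstrass_laurent_localizations} as written, or an honest proof of the quasi-isomorphism between the two presentations that does not rely on facts proved downstream of this proposition.
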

\begin{proof}
Let $R$ be an affinoid $A$-algebra.
As all ideals of $R \lt X_1, \ldots, X_n \gt$ are closed and $A$ is strongly Noetherian, we need only to check that the Koszul complexes that define $R \lt f_1, \ldots, f_n \gt^h$ and $R \lt f_1, \ldots, f_n, g_1^{-1}, \ldots, g_m^{-1} \gt^h$ are algebraically exact as morphisms of finite $R \lt X_1, \ldots, X_n \gt$-modules are automatically strict. By induction we can reduce to the cases $R \lt f^{-1} \gt^h$ and $R \lt f \gt^h$. Indeed, suppose that the result is known for $R \lt f_1, \ldots, f_n, g_1^{-1}, \ldots, g_m^{-1} \gt^h$. We can then do a double induction on the indexes $n$ and $m$. So, $R \lt f_1, \ldots, f_n, g_1^{-1}, \ldots, g_m^{-1} \gt^h$ is supposed to be quasi-isomorphic to an affinoid $A$-algebra, let us denote it $R'$, and hence 
\begin{equation} \label{eq:inductive_weierstrass_localization}
R \lt f_1, \ldots, f_n, g_1^{-1}, \ldots, g_m^{-1} \gt^h \wotimes_R^\L R \lt f \gt^h \cong [R' \lt X \gt \stackrel{\mu_{(X -f)}}{\to} R' \lt X \gt]
\end{equation}
in $D^-(\bBan_R)$, and similarly for $R \lt f^{-1} \gt^h$. But by our inductive hypothesis we know that the complex on the right hand side of \eqref{eq:inductive_weierstrass_localization} is strictly regular.

Let us consider first $R \lt f^{-1} \gt^h$. Since the ideal $(f X - 1)$ is closed, we need only to show that the morphism on $R \lt X \gt$ induced by multiplication by $(f X - 1)$ is injective. This is equivalent to say that the equation
\[ (f X - 1) a = 0 \]
with $a \in R$ has only $a = 0$ as solution. We can write
\[ a = \sum_{n \in \N} a_n X^n \]
with $a_n \in R$ and so the equation
\[ (f X - 1) (\sum_{n \in \N} a_n X^n) = 0 \] 
gives the set of equations
\[ f a_{n-1} - a_n = 0, \ \ n \in \N. \]
These equations can be solved recursively starting with 
\[ a_0 = 0, \ \  f a_0 - a_1 = 0 \then a_1 = 0, \ \ldots  \] 
proving the claim. A similar reasoning in the case of the multiplication map induced by $(X - f)$ leads the the system of equations
\[ f a_0 = 0, \ \ a_{n-1} - f a_n = 0, \ \ n \in \N. \]
Therefore, we get that if $f$ is not a zero divisor then $a_0 = 0$ and recursively $a_n = 0$ for all $n$, whereas if $f$ is a zero divisor then we get that $f a_0 = 0$ has a non-zero solution. Then, solving all the other equations we get the relations
\[ a_0 = f^n a_n, \ \ \forall n \in \N \]
implying that $a_0$ must be divisible by all powers of $f$, \ie $a_0 \in \underset{n \in \N}{\displaystyle{ \bigcap}} (f)^n$. As $R$ is Noetherian we have that $a_n = 0$ for all $n$. 
\end{proof}

The following lemma is useful for reducing computations about derived rational localizations to computations of derived Laurent localizations.

\begin{lemma} \label{lemma:rational_as_compositions}
Every derived rational localization of an affinoid $A$-algebra can be written as a composition of a derived Weierstrass and a derived Laurent localization.
\end{lemma}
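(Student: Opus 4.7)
The plan is to factor the derived rational localization $R \to R \lt f_1/f_0, \ldots, f_n/f_0 \gt^h$ as
\[ R \to R \lt f_0^{-1} \gt^h \to R \lt f_0^{-1} \gt^h \lt f_1/f_0, \ldots, f_n/f_0 \gt^h, \]
the first map being a derived Laurent localization inverting $f_0$ and the second a derived Weierstrass localization. By Proposition \ref{prop:laurent_localizations_koszul_regular}, the derived Laurent $R \lt f_0^{-1} \gt^h$ is strictly regular and thus concentrated in degree $0$, agreeing with the classical affinoid Banach algebra $R \lt f_0^{-1} \gt$, in which the elements $f_i/f_0$ are well-defined. Consequently the derived Weierstrass makes sense as a derived localization of an affinoid $A$-algebra in the sense of Definition \ref{defn:weierstrass_laurent_localizations}.

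To identify the composition with the derived rational $\cR \doteq K_{R \lt X_1, \ldots, X_n \gt}(f_0 X_1 - f_1, \ldots, f_0 X_n - f_n)$, observe that the composition is by definition the Koszul complex $K_{R \lt f_0^{-1} \gt \lt X_1, \ldots, X_n \gt}(X_i - f_i/f_0)$. Since $f_0$ is a unit in $R \lt f_0^{-1} \gt$, multiplying each Koszul generator by $f_0$ yields an isomorphism of Koszul complexes onto $K_{R \lt f_0^{-1} \gt \lt X_1, \ldots, X_n \gt}(f_0 X_i - f_i)$, which equals $\cR \wotimes_R R \lt f_0^{-1} \gt$. Using that rational Laurent localizations of Noetherian affinoid algebras are flat over the base, this underived tensor product coincides with the derived tensor product $\cR \wotimes_R^\L R \lt f_0^{-1} \gt^h$.

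It remains to show $\cR \wotimes_R^\L R \lt f_0^{-1} \gt^h \cong \cR$, i.e. that $f_0$ acts invertibly on $\cR$ at the derived level. I would verify this by computing the derived cone
\[ \cR \wotimes_R^\L K_R(f_0) = K_{R \lt X_1, \ldots, X_n \gt}(f_0 X_1 - f_1, \ldots, f_0 X_n - f_n, f_0). \]
The unimodular substitution replacing each generator $f_0 X_i - f_i$ by $(f_0 X_i - f_i) - X_i \cdot f_0 = -f_i$ transforms this complex into $K_{R \lt X_1, \ldots, X_n \gt}(f_0, f_1, \ldots, f_n)$, which is the base change of the $R$-linear Koszul complex $K_R(f_0, f_1, \ldots, f_n)$. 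The Bezout relation $1 = g_0 f_0 + g_1 f_1 + \cdots + g_n f_n$ provided by $(f_0, \ldots, f_n) = 1$ gives the explicit contracting homotopy $h(e_{i_1} \wedge \cdots \wedge e_{i_k}) = \sum_j g_j\, e_j \wedge e_{i_1} \wedge \cdots \wedge e_{i_k}$, so $K_R(f_0, \ldots, f_n)$ is acyclic. Hence multiplication by $f_0$ on $\cR$ is a quasi-isomorphism, yielding the desired identification.

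The main obstacle is the careful interplay between derived and underived tensor products: identifying the composition with $\cR \wotimes_R^\L R \lt f_0^{-1} \gt^h$ requires the flatness of rational Laurent localizations of Noetherian affinoid algebras, and the collapse $\cR \wotimes_R^\L R \lt f_0^{-1} \gt^h \cong \cR$ rests on the Koszul-theoretic acyclicity argument drawn from the Bezout relation, which is precisely what promotes the classical invertibility of $f_0$ on $\LH^0(\cR)$ to derived invertibility on all of $\cR$.
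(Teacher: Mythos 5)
Your argument has the right flavor up to a point, and the Koszul manipulation is correct: the unimodular change of generators sending $f_0 X_i - f_i$ to $-f_i$ does give
\[ K_{R \lt X_1, \ldots, X_n \gt}(f_0 X_1 - f_1, \ldots, f_0 X_n - f_n, f_0) \cong K_{R \lt X_1, \ldots, X_n \gt}(f_0, f_1, \ldots, f_n), \]
and the Bezout relation does supply a contracting homotopy, so $\cR \wotimes_R^\L K_R(f_0)$ is strictly acyclic and $f_0$ is indeed invertible on $\cR$ in the derived-algebraic sense. The gap lies in the very next step: you assert that this is ``i.e.'' equivalent to $\cR \wotimes_R^\L R \lt f_0^{-1} \gt^h \cong \cR$, and it is not. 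The complex $R \lt f_0^{-1} \gt^h = K_{R\lt X \gt}(f_0 X - 1)$ is not the algebraic localization $R[f_0^{-1}]$ but its Banach completion; geometrically it picks out the Laurent domain $\{ v : v(f_0) \ge 1 \}$. The rational domain $U(f_1/f_0, \ldots, f_n/f_0)$ only forces $v(f_0) \ge v(f_i)$ for all $i$, which together with $(f_0,\ldots,f_n)=(1)$ gives a \emph{positive lower bound} on $v(f_0)$, but by no means $v(f_0) \ge 1$. Concretely, take $R = k \lt T \gt$, $f_0 = \pi$ a uniformizer of $k$, $f_1 = T$: then $\cR$ is the (nonzero) disk algebra $k\lt \pi^{-1} T \gt$, while $R \lt \pi^{-1} \gt = R\lt X \gt/(\pi X - 1)$ is the zero ring because $|\pi X| < 1$ everywhere on $\cM(R \lt X \gt)$. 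So $\cR \wotimes_R^\L R \lt f_0^{-1} \gt^h = 0 \ne \cR$, even though $f_0 = \pi$ is a unit of $R$ and hence certainly acts invertibly on $\cR$.

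This is precisely the step where the paper's proof uses the Tate hypothesis on $A$, which you never invoke. One first records the finite bound $M = \sup_{x \in X} |f_0^{-1}(x)|$ on the Berkovich spectrum $X$ of the rational localization, then picks a unit $\la \in A^\times$ of sufficiently large norm — available because $A$ is Tate — so that $X \subset \cM(R \lt (\la f_0)^{-1} \gt)$. The factorization is then through the \emph{rescaled} Laurent localization $R \lt (\la f_0)^{-1} \gt^h$, not $R \lt f_0^{-1} \gt^h$, and the rest of your bookkeeping (change of Koszul generators, flatness of the Laurent stage to pass between $\wotimes$ and $\wotimes^\L$, the Weierstrass stage making sense over the resulting affinoid) can be carried out essentially as you wrote it with $f_0$ replaced by $\la f_0$ throughout. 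Without the rescaling the composition you propose simply computes a (possibly empty) proper subdomain of the target rational domain.
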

\begin{proof}
We have already proved that derived Weierstrass and derived Laurent localizations are Koszul strictly regular. Therefore, for every $\la \in A^\times$ and $f \in A$ we have a quasi-isomorphism
\[ R \lt \la f^{-1} \gt^h \to \frac{R \lt X \gt}{(\la f X - 1)}, \]
where the object on the right hand side is considered as a complex concentrated in degree $0$.
Consider a derived rational localization $R \to R \l \lt \frac{f_1}{f_0}, \ldots, \frac{f_n}{f_0} \r \gt^h$.
Thus, as $f_0$ is invertible in the Banach ring $\frac{R \lt X_1, \ldots, X_n \gt}{(f_0 X_1 - f_1, \ldots, f_0 X_n - f_n)} = c(\LH^0(R \l \lt \frac{f_1}{f_0}, \ldots, \frac{f_n}{f_0} \r \gt^h))$ (recall that the functor $c$ is strictly monoidal by Corollary \ref{cor:monoidal_contracing_ban}) then
\[ \sup_{x \in X} |f_0^{-1}(x)| = M < \infty \]
where we denoted $X = \cM(c(\LH^0(R \l \lt \frac{f_1}{f_0}, \ldots, \frac{f_n}{f_0} \r \gt^h)))$, the Berkovich spectrum. Since $A$ is a Tate ring there exists $\la \in A^\times$ such that 
\[ \sup_{x \in Y} |\la f_0 (x)| > M  \]
where we denoted $Y = \cM(R)$. Therefore, 
\[ X \subset \cM(R \lt (\la f_0)^{-1} \gt^h) \]
and
\[ R \lt (\la f_0)^{-1} \gt^h \to R \l \lt \frac{\la f_1}{\la f_0}, \ldots, \frac{\la f_n}{\la f_0} \r \gt^h \cong R \l \lt \frac{f_1}{f_0}, \ldots, \frac{f_n}{f_0} \r \gt^h \]
is a Weierstrass localization of $R \lt \la f^{-1} \gt^h$. In this way we get a morphism of dg-algebras
\[ R \lt (\la f_0)^{-1} \gt^h \to R \l \lt \frac{f_1}{f_0}, \ldots, \frac{f_n}{f_0} \r \gt^h \]
and the composition
\[ R \to R \lt (\la f_0)^{-1} \gt^h \to  R \l \lt \frac{f_1}{f_0}, \ldots, \frac{f_n}{f_0} \r \gt^h \]
shows that every derived rational localization can be written as a composition of a derived Laurent and a derived Weierstrass localization.
\end{proof}

In particular, Lemma \ref{lemma:rational_as_compositions} shows that all derived rational localizations are strictly Koszul regular. We record this as a corollary.

\begin{cor} \label{cor:rational_localizations_ergular}
Derived rational localizations of affinoid algebras over $A$ are strictly Koszul regular.
\end{cor}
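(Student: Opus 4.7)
The plan is to combine Lemma \ref{lemma:rational_as_compositions} with two applications of Proposition \ref{prop:laurent_localizations_koszul_regular}. By the lemma, every derived rational localization
\[
R \to R \l\lt \frac{f_1}{f_0}, \ldots, \frac{f_n}{f_0} \r\gt^h
\]
factors as a derived Laurent localization $R \to R \lt (\la f_0)^{-1} \gt^h$ for a suitable $\la \in A^\times$, followed by a derived Weierstrass localization of $R \lt (\la f_0)^{-1} \gt^h$ obtained by adjoining the elements $\la f_i$, which represent $f_i/f_0$ up to the unit $\la$.

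Next, I would invoke Proposition \ref{prop:laurent_localizations_koszul_regular} on the first step: the derived Laurent localization $R \to R \lt (\la f_0)^{-1} \gt^h$ is strictly Koszul regular, so it is quasi-isomorphic to an honest affinoid $A$-algebra $R'$ concentrated in degree $0$. Because $A$ is strongly Noetherian Tate and $R'$ is again affinoid over $A$, Proposition \ref{prop:laurent_localizations_koszul_regular} applies a second time, now to the second arrow read as a derived Weierstrass localization of $R'$: it too is strictly Koszul regular.

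To finish, it remains to see that the composite is strictly Koszul regular as a complex over the Tate algebra $R \lt X_1, \ldots, X_n \gt$. The Koszul complex defining the derived rational localization is, via the change of variables furnished by Lemma \ref{lemma:rational_as_compositions}, quasi-isomorphic as a simplicial commutative Banach $R$-algebra to the derived tensor product of the two Koszul pieces coming from the Laurent and Weierstrass steps. Each factor $K(-)(y)$ is a two-term complex of finite free, hence flat (by Proposition \ref{prop:projectives_are_flat}), Banach modules, so the derived and underived tensor products agree and the composite complex is concentrated in degree $0$. Its degree $0$ cohomology is the ordinary Weierstrass localization of $R'$, which is an honest Banach algebra by Lemma \ref{lemma:closed_ideals}.

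The main obstacle I anticipate is this reassembly step: one must verify that the change-of-variables identification underlying Lemma \ref{lemma:rational_as_compositions} promotes to a genuine quasi-isomorphism of simplicial Banach algebras, not merely a morphism in the derived category. Flatness of each individual Koszul factor together with closedness of ideals in the strongly Noetherian algebras involved (Lemma \ref{lemma:closed_ideals}) reduces this to a purely algebraic check of Koszul regularity for the combined sequence, which is exactly what the two applications of Proposition \ref{prop:laurent_localizations_koszul_regular} supply.
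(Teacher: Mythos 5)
Your proposal is correct and follows essentially the same route as the paper, which simply records the corollary as an immediate consequence of Lemma \ref{lemma:rational_as_compositions} (factorization into a derived Laurent followed by a derived Weierstrass localization) combined with Proposition \ref{prop:laurent_localizations_koszul_regular}. The ``reassembly'' concern you raise is already absorbed into the inductive proof of Proposition \ref{prop:laurent_localizations_koszul_regular}, whose base-change step \eqref{eq:inductive_weierstrass_localization} is exactly the identification of the composite Koszul complex with a two-term complex over the degree-$0$ affinoid algebra produced by the previous step.
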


We are now ready to prove that derived rational localizations of affinoid $A$-algebras are homotopy Zariski open localizations.

\begin{prop} \label{prop:laurent_localizations_homotpy_epi}
Derived Weierstrass and derived Laurent localizations of affinoid $A$-algebras are homotopy epimorphisms.
\end{prop}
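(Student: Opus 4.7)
The plan is to verify directly that the multiplication map $T \wotimes_R^\L T \to T$ is an equivalence in $D(\bBan_R)$, for $T = R\lt f_1, \ldots, f_n, g_1^{-1}, \ldots, g_m^{-1}\gt^h = K_{R\lt \vec X, \vec Y\gt}(X_i - f_i, g_j Y_j - 1)$. By Proposition \ref{prop:laurent_localizations_koszul_regular}, $T$ is strictly Koszul regular, so $T \cong S$ in $D(\bBan_R)$, where $S = H^0(T)$ is the classical Weierstrass/Laurent localization of $R$ (an affinoid $A$-algebra). Since Tate algebras are $R$-flat by Proposition \ref{prop:tate_flat}, the underlying complex of $T$ is a bounded complex of flat $R$-modules, and hence a genuine flat resolution of $S$; therefore $T \wotimes_R^\L T \cong T \wotimes_R^\L S \cong T \wotimes_R S$.

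Next, I would compute $T \wotimes_R S$ by base change of the Koszul presentation along the structure map $R \to S$, obtaining
\[ T \wotimes_R S \cong K_{S\lt \vec X, \vec Y\gt}(X_i - f_i, g_j Y_j - 1), \]
which is precisely the derived Laurent localization $S\lt \vec f, \vec g^{-1}\gt^h$ of the affinoid $A$-algebra $S$. Applying Proposition \ref{prop:laurent_localizations_koszul_regular} now to $S$, this complex is again strictly Koszul regular, so it is equivalent in $D(\bBan_S)$ to its $H^0 = S\lt \vec f, \vec g^{-1}\gt$.

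The crucial observation is that in $S$ the images of the variables $X_i$ equal $f_i$ and the images of $Y_j$ are inverses of $g_j$; since $|X_i|, |Y_j| \le 1$ in the Tate algebra, this forces $|f_i|_S \le 1$ and $|g_j^{-1}|_S \le 1$ in $S$. Hence the defining conditions of the Laurent localization of $S$ at these elements are already met, and Weierstrass division gives $S\lt \vec f, \vec g^{-1}\gt \cong S$ canonically. Combining the identifications yields $T \wotimes_R^\L T \cong S \cong T$, and tracing the diagrams shows that the multiplication map $T \wotimes_R T \to T$ corresponds under these equivalences to the identity on $S$; consequently $R \to T$ is a homotopy epimorphism. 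The most delicate point will be confirming that these canonical identifications remain compatible with the multiplication map — in particular, that the universal identification $S\lt \vec f, \vec g^{-1}\gt \cong S$ intertwines with the unit and multiplication of $T$ over $R$ — but this is essentially a formal verification using functoriality of Koszul constructions under base change and the universal property of classical Laurent localizations.
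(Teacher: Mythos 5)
Your proposal is correct and follows essentially the same route as the paper's proof: use strict Koszul regularity (Proposition \ref{prop:laurent_localizations_koszul_regular}) to identify $T$ with its degree-zero part $S$, use flatness of the Tate algebra to compute $T \wotimes_R^\L T$ as the base-changed Koszul complex $K_{S\lt \vec X, \vec Y\gt}(X_i - f_i, g_j Y_j - 1)$, and observe that since the $f_i$ and $g_j^{-1}$ are already power-bounded in $S$ this localization is canonically $S$ itself. The only cosmetic difference is that the paper first reduces by induction to a single Weierstrass or Laurent generator, whereas you treat the full multivariable Koszul complex at once; both arguments rest on the same ingredients.
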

\begin{proof}
Again by induction we only need to discuss the cases $R \lt f \gt^h$ and $R \lt f^{-1} \gt^h$. Let $R'$ denote one of these two Koszul complexes, in both cases we need to prove that
\[ R' \wotimes_R^\L R' \cong R'. \]
Let us first show that $R' \wotimes_R^\L R' \cong R' \wotimes_R R'$. By Proposition \ref{prop:laurent_localizations_koszul_regular} we know that $R'$ is concentrated in degree $0$ and by definition the Koszul complex $R'$ is a projective resolution of $c(\LH^0(R')) \cong \LH^0(R')$. Let us fix $R' = R \lt f \gt^h$, the other case is done similarly. In this case
\[ R' = [ 0 \to R \lt X \gt \stackrel{\mu_{(X -f)}}{\to} R \lt X \gt \to 0 ] \]
therefore
\[ R' \wotimes_R^\L R' \cong [ 0 \to \LH^0(R') \lt X \gt \stackrel{\mu_{(X -f)}}{\to} \LH^0(R') \lt X \gt \to 0 ] \]
because $R \lt X \gt$ is flat over $R$ (where we identified $f$ with its image in $\LH^0(R')$). But this is nothing more than the Koszul complex $\LH^0(R') \lt f \gt^h$ that is again concentrated in degree $0$ because $\LH^0(R')$ is an affinoid $A$-algebra. So, the derived tensor product is concentrated in degree $0$ and finally, since $f \in \LH^0(R')^\circ$ we get that $\LH^0(R') \lt f \gt^h \cong \LH^0(R')$ proving that $R \to R'$ is a homotopy epimorphism.
\end{proof}

\begin{prop} \label{prop:rational_localizations_homotpy_epi}
Every derived rational localization is a homotopy epimorphism.
\end{prop}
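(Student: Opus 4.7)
The plan is to reduce this to Proposition \ref{prop:laurent_localizations_homotpy_epi} by means of the factorization provided by Lemma \ref{lemma:rational_as_compositions}. That lemma exhibits any derived rational localization
\[ R \to R \l \lt \frac{f_1}{f_0}, \ldots, \frac{f_n}{f_0} \r \gt^h \]
as the composition of a derived Laurent localization $R \to R \lt (\la f_0)^{-1} \gt^h$ (for a suitable $\la \in A^\times$) followed by a derived Weierstrass localization of $R \lt (\la f_0)^{-1} \gt^h$. By Proposition \ref{prop:laurent_localizations_koszul_regular} this intermediate object is (quasi-isomorphic to) an honest affinoid $A$-algebra, so Proposition \ref{prop:laurent_localizations_homotpy_epi} applies to both factors and shows each of them is a homotopy epimorphism.

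It then suffices to check that homotopy epimorphisms are closed under composition. This is essentially formal: by Proposition \ref{prop:tensor_pushouts} the pushout in $\bComm(\bC)$ is computed by the tensor product, and correspondingly the homotopy pushout in $\infty \minus \bComm(\bC)$ is computed by $\wotimes^\L$. As already used in the proof of Corollary \ref{cor:tensor_pushouts}, this means that a morphism $A \to B$ is a homotopy epimorphism precisely when it is an epimorphism in the homotopy category of $\infty \minus \bComm(\bC)$ (the condition $B \wotimes_A^\L B \cong B$ is exactly the codiagonal being an isomorphism). Since epimorphisms in any category are stable under composition, the composite of the derived Laurent and derived Weierstrass factors above is again a homotopy epimorphism, as desired.

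The only mildly non-trivial point in this strategy is not the composition step itself, but the verification that Proposition \ref{prop:laurent_localizations_homotpy_epi} genuinely applies to the derived Weierstrass factor, whose source is not $R$ but the intermediate derived Laurent localization $R \lt (\la f_0)^{-1} \gt^h$; this is precisely what the strict Koszul regularity established in Proposition \ref{prop:laurent_localizations_koszul_regular} ensures, as it identifies $R \lt (\la f_0)^{-1} \gt^h$ with an honest affinoid $A$-algebra to which the entire machinery of Section \ref{sec:strongly_noetherian} applies verbatim.
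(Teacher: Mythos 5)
Your proposal is correct and follows essentially the same route as the paper: factor the derived rational localization via Lemma \ref{lemma:rational_as_compositions} into a derived Laurent localization followed by a derived Weierstrass localization, apply Proposition \ref{prop:laurent_localizations_homotpy_epi} to each factor, and use closure of homotopy epimorphisms under composition. Your extra remarks—justifying the composition step via epimorphisms in the homotopy category and noting that strict Koszul regularity lets the Weierstrass factor start from an honest affinoid $A$-algebra—are accurate elaborations of points the paper leaves implicit.
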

\begin{proof}
Since compositions of homotopy epimorphisms are homotopy epimorphisms, the claim follows from Proposition \ref{prop:laurent_localizations_homotpy_epi} and Lemma \ref{lemma:rational_as_compositions}.
\end{proof}

The next proposition shows that the derived intersection of rational subsets of affinoid spaces over $A$ agrees with the underived intersection.

\begin{prop} \label{prop:derived_intersection_rational_affinoid}
Let $R \to R'$ and $R \to R''$ be two derived rational localization of an affinoid algebra over $A$, then
\[ R' \wotimes_R^\L R'' \cong R' \wotimes_R R'' \]
\end{prop}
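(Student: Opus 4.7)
My proof plan is as follows. By Definition \ref{defn:rational_localizations_affionoid}, each of $R'$ and $R''$ is presented as a Koszul complex
\[ K_{R \lt X_1, \ldots, X_n \gt}(f_0 X_1 - f_1, \ldots, f_0 X_n - f_n) \]
whose terms are finite free modules over the Tate algebra $R \lt X_1, \ldots, X_n \gt$. By Proposition \ref{prop:tate_flat} the Tate algebra is $R$-flat, so each term of $R'$ (and of $R''$) is $R$-flat. Hence both $R'$ and $R''$ are bounded complexes of $R$-flat Banach modules.

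The key step is then to invoke the following standard principle: a bounded complex of $R$-flat Banach modules computes the derived tensor product over $R$, that is, the underived functor $R' \wotimes_R (-)$ descends to the derived category and agrees there with $R' \wotimes_R^\L (-)$. Applied to $R''$ (or, symmetrically, tensoring a flat resolution of $R''$ against the complex $R'$, where $R'$ is already of the correct form), this yields
\[ R' \wotimes_R^\L R'' \cong R' \wotimes_R R'' \]
in $D^-(\bBan_R)$, which is the desired statement. Note that I do not need to reshape the result as a Koszul complex over $R''\lt X_1,\ldots,X_n\gt$ or appeal to Corollary \ref{cor:rational_localizations_ergular}; what is being asserted here is solely the coincidence of derived and underived tensor product, and flatness is what governs this.

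The main subtle point is to verify the key step in the quasi-abelian setting, that is, that a bounded complex of flat Banach modules is $\wotimes^\L$-acyclic. This reduces to the observation that for flat $F \in \bBan_R$ the functor $F \wotimes_R (-)$ is strictly exact (by the definition of flatness together with Proposition \ref{prop:projectives_are_flat} and Corollary \ref{cor:projectives_are_acyclic}); the extension from single flat objects to bounded complexes of flat objects is then a routine hyper-Tor/spectral sequence argument. Since each term of the Koszul complex $R'$ is in fact a direct summand of some $c^0(X)$, and in particular projective, the acyclicity statement I need is already covered by Corollary \ref{cor:projectives_are_acyclic} term by term, so the only nontrivial point is propagating acyclicity along the totalization of a bounded double complex, which I do not foresee as an actual obstacle.
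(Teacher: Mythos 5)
There is a genuine gap, and it lies in what the right-hand side $R' \wotimes_R R''$ actually means. By Corollary \ref{cor:rational_localizations_ergular} the objects $R'$ and $R''$ are strictly Koszul regular, i.e.\ quasi-isomorphic to honest Banach rings concentrated in degree $0$, and the content of the proposition (and the way it is used in Theorem \ref{thm:tate_acyclicity_affinoid} to reduce the derived Tate--\u{C}ech complex to the classical one) is that the derived tensor product is \emph{again concentrated in degree $0$} and coincides with the underived completed tensor product of those degree-$0$ Banach algebras. Your argument only establishes that the totalization of the double complex formed from the two Koszul complexes computes $R' \wotimes_R^\L R''$ --- which is essentially the definition of the left-hand side, since the terms are projective by Proposition \ref{prop:c0_projective} --- and says nothing about the vanishing of the higher left-heart cohomology of that totalization. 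Flatness of the \emph{terms} of the Koszul complexes cannot possibly suffice: those terms are equally flat when $R$ is a general (non-affinoid) Banach ring, yet Proposition \ref{prop:derived_intersection} in Example \ref{exa:buzz} exhibits $\LH^1(R \lt T \gt^h \wotimes_R^\L R \lt T^{-1} \gt^h) \cong (Z) \ne 0$, so there the derived and underived intersections genuinely differ. The purely algebraic analogue makes the same point: a Koszul complex on a non-regular sequence has free, hence flat, terms but nonzero higher homology.

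The input your proof never invokes is precisely the affinoid/strongly Noetherian hypothesis, and without it the statement is false. The paper's route is: reduce by Lemma \ref{lemma:rational_as_compositions} and induction to the Weierstrass and Laurent cases $R \lt f \gt^h$ and $R \lt g^{-1} \gt^h$; then, as in Proposition \ref{prop:laurent_localizations_homotpy_epi}, use that $R'$ is concentrated in degree $0$ and that its Koszul presentation is a projective resolution, so that $R' \wotimes_R^\L R''$ is identified with a Koszul complex over the Banach ring $\LH^0(R')$; this new Koszul complex is again strictly regular because $\LH^0(R')$ is again an affinoid $A$-algebra --- this is where the Noetherianity of $A \lt X_1, \ldots, X_n \gt$ and Proposition \ref{prop:laurent_localizations_koszul_regular} enter --- hence it is concentrated in degree $0$ with $\LH^0$ equal to the underived tensor product. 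Some such regularity-after-base-change argument is unavoidable, and your proposal does not contain one.
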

\begin{proof}
Using Lemma \ref{lemma:rational_as_compositions} and induction we can again reduce to the case when both $R'$ and $R''$ are of the form $R \lt f \gt^h$ or $R \lt g^{-1} \gt^h$ for some $f, g \in R$. Explicit computations as in the proof of Proposition \ref{prop:laurent_localizations_homotpy_epi} immediately prove the claim.
\end{proof}

Proposition \ref{prop:derived_intersection_rational_affinoid} can be restated geometrically by saying that the derived intersection of two rational open subsets is (quasi-)isomorphic to their underived intersection \ie that the intersection is transversal. This is a property that one expects to hold for open immersions of a topology, at least as far as one is considering topologies that are expected to have some flatness properties. 
%We will see later on that the usual definition of the structural sheaf on $\Spa(R)$ leads to a sheaf for which this property is not satisfied for a generic Banach algebra over $R$, whereas our alternative definition leads to a natural modification of the structural sheaf for which the derived self intersections of open subsets are not derived.
We now check that the topology of $\Spa(R)$ is compatible with the homotopy Zariski topology introduced in Section \ref{sec:quasi_abelian}.

\begin{thm} \label{thm:tate_acyclicity_affinoid}
Let $R$ be an affinoid $A$-algebra. For any (finite) derived rational cover $\{ \Spa(R_i) \}_{i \in I}$ of $\Spa(R)$ one has that the Tate-\u{C}ech complex
\begin{equation} \label{eq:tate_complex_spa}
\Tot(0 \to R \to \prod_{i \in I} R_i \to  \prod_{i,j \in I} R_i \wotimes_R^\L R_j \to \cdots  )
\end{equation} 
is strictly acyclic. In particular, $\{ \Spa(R_i) \}_{i \in I}$ being a cover for the homotopy Zariski topology is equivalent to being a cover of $\Spa(R)$.
\end{thm}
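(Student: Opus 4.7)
My plan is to reduce the derived Tate-\u{C}ech complex to the classical Tate-\u{C}ech complex of $\Spa(R)$ and then invoke Huber's Tate acyclicity theorem for affinoid algebras over strongly Noetherian Tate rings. By Corollary \ref{cor:rational_localizations_ergular}, every $R_i$ is strictly Koszul regular and hence quasi-isomorphic to the classical rational localization $\LH^0(R_i)$ of $R$ concentrated in degree zero. Proposition \ref{prop:derived_intersection_rational_affinoid} gives the analogous statement for pairwise derived intersections; I would extend it inductively to all iterated derived tensor products by observing that if $R_i = R\langle f_p/f_0\rangle$ and $R_j = R\langle g_q/g_0\rangle$ are rational localizations of $R$, then the combined data $\{f_p g_q/(f_0 g_0)\}_{p,q}$ still generates the unit ideal and thus defines a rational localization of $R$, which is canonically identified with the underived tensor $R_i \wotimes_R R_j$. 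Iterating, every term appearing in \eqref{eq:tate_complex_spa} is quasi-isomorphic to a classical rational localization of $R$, and the derived complex collapses to the classical \u{C}ech complex of Banach $R$-algebras attached to the rational cover $\{\Spa(R_i)\}_{i \in I}$.

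The strict exactness of this classical \u{C}ech complex is Huber's Tate acyclicity theorem: algebraic exactness is proved by refining the rational cover by a standard Laurent cover and computing directly in the Laurent case, and strictness follows from the fact that all terms are Banach $R$-modules with bounded differentials, so that algebraic exactness at any position forces the image of the preceding differential to coincide with the (closed) kernel of the following differential, and the open mapping theorem then upgrades this to strict exactness; Lemma \ref{lemma:closed_ideals} guarantees that all quotients occurring in these computations are genuine Banach algebras. For the ``in particular'' clause, Theorem \ref{thm:derived_Tate} characterizes homotopy Zariski covers as those finite families of homotopy epimorphisms whose derived Tate-\u{C}ech complex is strictly acyclic. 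Combined with Proposition \ref{prop:rational_localizations_homotpy_epi} and the first part of the present theorem, this shows that every set-theoretic rational cover is a homotopy Zariski cover. Conversely, if a family of derived rational localizations were a homotopy Zariski cover but missed a point $v \in \Spa(R)$, the residue Banach field at $v$ would be a non-zero $R$-module killed by base change to each $R_i$, violating the conservativity requirement of Definition \ref{defn:homotopy_zariski_topology}.

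The principal difficulty lies in the inductive step of the reduction: while Proposition \ref{prop:derived_intersection_rational_affinoid} handles pairwise intersections directly, one has to verify at each stage that the concatenated rational datum still defines a rational localization that is strictly Koszul regular, so that Corollary \ref{cor:rational_localizations_ergular} keeps applying and no higher-degree cohomology can appear. This is where the strong Noetherianity of $A$ is used essentially, since it ensures that all the Koszul complexes in sight have closed image and can be interpreted in $\bBan_R$ rather than only in $\LH(\bBan_R)$; without it, one would only obtain a statement in the left heart and lose the desired strict acyclicity in the Banach category.
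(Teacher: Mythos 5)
Your proposal is correct and follows essentially the same route as the paper's proof: identify the complex \eqref{eq:tate_complex_spa} as the derived Tate--\u{C}ech complex of Theorem \ref{thm:derived_Tate} via Proposition \ref{prop:rational_localizations_homotpy_epi}, use strict Koszul regularity (Corollary \ref{cor:rational_localizations_ergular}) together with Proposition \ref{prop:derived_intersection_rational_affinoid} to collapse it to the classical Tate--\u{C}ech complex, and then invoke Huber's acyclicity. You in fact supply several details the paper leaves implicit (the inductive identification of higher intersections with rational localizations for the concatenated datum, the open-mapping-theorem upgrade to strictness, and the conservativity direction of the ``in particular'' clause), all of which are consistent with the paper's intended argument.
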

\begin{proof}
Since by Proposition \ref{prop:rational_localizations_homotpy_epi} derived rational localizations are homotopy Zariski open localizations, then the complex of equation \eqref{eq:tate_complex_spa} is the (derived) Tate-\u{C}ech complex as considered in Theorem \ref{thm:derived_Tate}. Therefore, $\{ \Spa(R_i) \}_{i \in I}$ is a cover for the homotopy Zariski topology if and only if \eqref{eq:tate_complex_spa} is strictly exact. But since rational localizations of $R$ are strictly Koszul regular and by Proposition \ref{prop:derived_intersection_rational_affinoid} we have that $R_i \wotimes_R^\L R_j \cong R_i \wotimes_R R_j$ then the complex \eqref{eq:tate_complex_spa} reduces to the usual Tate-\u{C}ech complex. Hence, for a family of rational localization it is equivalent being a cover for the homotopy Zariski topology or for the topology of $\Spa(R)$.
\end{proof}

The results of this section are a generalization of (the some of) the main theorems of \cite{BeKr} where only the case when $A$ is a valued field was considered. Our results have the following interpretation. 
Let $R$ be an affinoid $A$-algebra, then there is canonical open map of $\infty$-sites 
\[ \Zar_R \to \Spa(R) \]
that identifies $\Spa(R)$ as a quotient space of $\Zar_R$ via the functor that associates to a rational localization of $R$ with the open Zariski localization it determines.

\subsection{Localizations of general Banach rings}

As before we fix a base strongly Noetherian Tate ring $A$. We also suppose $A$ to have a uniform unit\footnote{The hypothesis of $A$ having a uniform unit and also the hypothesis of being Tate are not essential because they can be removed using the theory of reified spaces introduced by Kedlaya, \cf \cite{Ked3}. For simplicity we bound our discussion to the case of adic spaces in this work but we will comment more on the use of reified spaces for generalizing the results for this paper in the concluding section.}, \ie we suppose that there exists $x \in A^\times$ such that $|x^n| = |x|^n$ for all $n \in \Z$.

We notice that algebras $R \in \bComm(\bBan_A)$ have a multiplication map that is bounded, \ie for which there exists a $C > 0$ such that
\[ |x y| \le C |x||y| \]
for all $x, y \in R$. We recall the following basic lemma that means that in $\bComm(\bBan_A)$ there is no restriction in imposing $C = 1$.

\begin{lemma} \label{lemma:submultiplicative}
Let $(R, |\minus|) \in \bComm(\bBan_A)$, then there exists on $R$ another norm $\|\minus\|$ that is equivalent to $|\minus|$ and such that
\[ \|x y \| \le \|x\|\|y\| \]
for all $x,y \in R$.
\end{lemma}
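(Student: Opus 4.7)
The plan is to define the new norm as the operator norm of left multiplication, a standard construction for turning a Banach algebra with a bounded product into one with a submultiplicative product. Concretely, for $x \in R$ I would set
\[
\|x\| = \sup_{0 \ne y \in R} \frac{|xy|}{|y|}.
\]
Since multiplication on $R$ is bounded, there is a constant $C>0$ with $|xy| \le C|x||y|$, which shows $\|x\| \le C|x|$ and in particular that $\|\cdot\|$ is finite-valued. The triangle inequality (and its non-Archimedean version, if $|\cdot|$ is non-Archimedean) passes through the supremum, and positive definiteness will come from the equivalence established below. The compatibility with the $A$-action follows automatically from equivalence with $|\cdot|$.

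Next I would verify submultiplicativity directly from the definition: for $x,y \in R$ and any $z \in R$ with $yz \ne 0$,
\[
\frac{|(xy)z|}{|z|} = \frac{|x(yz)|}{|yz|}\cdot\frac{|yz|}{|z|} \le \|x\|\cdot\|y\|,
\]
and the case $yz=0$ is trivial. Taking the sup over $z$ gives $\|xy\| \le \|x\|\|y\|$.

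Finally I would establish equivalence of $\|\cdot\|$ and $|\cdot|$. The upper bound $\|x\| \le C|x|$ is already observed. For the lower bound I use that $R$, being an object of $\bComm(\bBan_A)$, has a unit $1 \in R$ (and $|1| > 0$ unless $R = 0$, which is the trivial case). Then
\[
\|x\| \ge \frac{|x\cdot 1|}{|1|} = \frac{|x|}{|1|},
\]
so $\frac{1}{|1|}|x| \le \|x\| \le C|x|$, proving equivalence.

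The only minor subtlety is the sub-case where products vanish in the submultiplicativity step, which is easily dispatched, and the harmless assumption $R \ne 0$ in the equivalence step. Neither the Tate, strongly Noetherian, nor uniform-unit hypotheses on $A$ play any role here; the lemma is a general fact about commutative Banach algebras with bounded (not necessarily submultiplicative) multiplication and a unit.
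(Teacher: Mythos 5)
Your proof is correct and is exactly the standard operator-norm argument (left multiplication $x \mapsto \mu_x$, with $\|x\| = \sup_{y \ne 0} |xy|/|y|$) behind Proposition 1.2.1/2 of [BGR], which is all the paper cites for this lemma. You have also correctly identified that none of the hypotheses on $A$ are needed, only boundedness of multiplication and the existence of a unit.
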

\begin{proof}
See Proposition 1.2.1/2 of \cite{BGR}.
\end{proof}

Let now $R$ be any Banach $A$-algebra that from now on will be supposed to be equipped with a sub-multiplicative norm. 

\begin{prop} \label{prop:presentation}
For any Banach $A$-algebra $R$ there exists a topologically free algebra $A \lt (r_i)^{-1} X \gt$, where $X = (X_i)_{i \in I}$ is a vector of variables indexed by a set $I$, and a strict contracting epimorphism
\[ \varphi: A \lt (r_i)^{-1} X_i \gt \to R. \]
\end{prop}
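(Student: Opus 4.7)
My plan is the tautological one: take one generator per element of $R$ and map it back to that element, assigning each variable the radius equal to the norm of its image. Concretely, set $I := R \setminus \{0\}$, and for each $a \in I$ take a variable $X_a$ with radius $r_a := |a|_R$.

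First, I would define the topologically free algebra $A \lt (r_a)^{-1} X_a : a \in I \gt$ as the coproduct $\coprod^{\le 1}_{a \in I} A \lt r_a^{-1} X_a \gt$ in the category $\bComm(\bBan_A^{\le 1})$. This coproduct exists because the category is cocomplete; handling the potentially large index set $I$ is the only definitional subtlety in the proof.

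Next, I would build $\varphi$ factor by factor. For each $a \in I$, the polynomial assignment $X_a \mapsto a$ extends to a contracting $A$-algebra morphism $A \lt r_a^{-1} X_a \gt \to R$: iterating the (assumed) sub-multiplicativity of the norm on $R$ yields $|a^n|_R \le r_a^n$, and hence
\[ \l| \sum_n c_n a^n \r|_R \le \max_n |c_n| \, r_a^n, \]
which is exactly the Tate norm of $\sum_n c_n X_a^n$, so the polynomial map is bounded and extends by continuity to the completion. The universal property of the coproduct then assembles these into a single contracting $A$-algebra morphism
\[ \varphi: A \lt (r_a)^{-1} X_a \gt \to R. \]

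Finally, I would verify that $\varphi$ is a strict epimorphism. Surjectivity is immediate from $\varphi(X_a) = a$ for all $a \in I$ (and $\varphi(0) = 0$). For strictness, I need to check that the quotient norm on $R$ induced by $\varphi$ coincides with $|\minus|_R$: the inequality $|a|_R \le |a|_{\mathrm{quot}}$ is automatic from $\varphi$ being contracting, while the reverse inequality follows from the distinguished preimage $X_a$ with $|X_a| = r_a = |a|_R$, giving $|a|_{\mathrm{quot}} \le |X_a| = |a|_R$. No serious obstacle arises: the proof is a direct application of sub-multiplicativity together with the universal property of the (large) coproduct in $\bComm(\bBan_A^{\le 1})$.
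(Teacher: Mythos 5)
Your proof is correct and follows essentially the same route as the paper: index the variables by the elements of $R$, give $X_a$ radius $|a|_R$, use sub-multiplicativity to see that each factor map $A\lt r_a^{-1}X_a\gt \to R$ is contracting, assemble them via the universal property of the contracting coproduct (the paper realizes this coproduct as $\limind^{\le 1}$ over finite subsets of $I$), and read off strictness from the distinguished preimages $X_a$. The only point you leave implicit is that your estimate $|c_n a^n|_R \le |c_n|_A\, r_a^n$ also requires the structural morphism $A \to R$ to be a contraction, not merely bounded; the paper disposes of this by first replacing $R$ with an isomorphic Banach $A$-algebra on which the $A$-action is contracting, a one-sentence fix you should include.
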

\begin{proof}
We recall the universal property of $A \lt (r_i)^{-1} X_i \gt$. First suppose that the morphism $A \to R$ is a contraction and that the index set $I$ is finite, then 
\[ \Hom_{\bComm(\bBan_A^{\le 1})}(A \lt (r_i)^{-1} X_i \gt, R) \cong \prod_{i \in I} R^{\le r_i} \]
where $R^{\le r_i}$ is the set of elements of $R$ with norm smaller or equal to $r_i$. Notice that this universal property differs form the usual universal property of $A \lt (r_i)^{-1} X_i \gt$ considered as an object of $\bComm(\bBan_A)$, in which case $X_i$ can be mapped to any element of $R$ with spectral radius less or equal to $r_i$. Both universal properties can be proved in a similar way and we give the details for the former one as it is a non-standard result. 

We notice that since the structural map $\phi: A \to R$ is a contraction then any contracting morphism
\[ \Phi: A \lt (r_i)^{-1} X_i \gt \to R \]
extending $\Phi$ is written as
\[ \Phi(\sum_{n = 0}^\infty a_{i, n_i} X_i^{n_i}) = \sum_{n = 0}^\infty \phi(a_{i, n_i}) s_i^{n_i} \]
with $s_i \in R^{\le r_i}$ (otherwise the map is not a contraction). Indeed
\[ |\Phi(\sum_{n = 0}^\infty a_{i, n_i} X_i^{n_i})| = |\sum_{n = 0}^\infty \phi(a_{i, n_i}) s_i^{n_i}| \le \max_{n \in \N} \{ |\phi(a_{i, n_i}) s_i^{n_i}| \} \le \max_{n \in \N} \{ |\phi(a_{i, n_i})|| s_i^{n_i}| \} \le \]
\[ \le \max_{n \in \N} \{ |\phi(a_{i, n_i})|| s_i|^{n_i} \} \le \max_{n \in \N} \{ |a_{i, n_i}| r_i^{n_i} \} = |\sum_{n = 0}^\infty a_{i, n_i} X_i^{n_i}| \} \]
because $\phi: A \to R$ is a contraction and by Lemma \ref{lemma:submultiplicative} we suppose (up to isomorphism in $\bComm(\bBan_A)$) that the norm of $R$ is submultiplicative.

As
\[ A \lt (r_i)^{-1} X_i \gt \cong {\limind_{F \subset I}}^{\le 1} A \lt (r_f)^{-1} X_f \gt, \]
where $F \subset I$ runs over all finite subsets, we get that
\[ \Hom_{\bComm(\bBan_A)}(A \lt (r_i)^{-1} X_i \gt, R) \cong \prod_{i \in I} R^{\le r_i}. \]
Since all elements of $R$ have a finite norm, it is immediate to construct a contracting strict epimorphism $A \lt (|r|^{-1} X_r)_{r \in R} \gt$ by considering $R$ as the indexing set and sending $X_r$ to $r \in R$.

Finally, suppose that $A \to R$ is not a contraction. Then, we can find $R'$ such that $R \cong R'$ in $\bComm(\bBan_A)$ and $A \to R'$ is a contraction and apply the previous reasoning to this morphism.
\end{proof}

Proposition \ref{prop:presentation} is the analytic analogue of the algebraic result that any $A$-algebra is the quotient of a polynomial algebra (in infinitely many variables). 

In order to define the derived structural sheaf of simplicial Banach $R$-algebras it is convenient to work in the category of bornological rings over $A$. This is not necessary but the nice properties of the category $\bBorn_A$ make the construction easy and proofs short and neat. As our definitions will be (a priori) dependent on the choice of a resolution of $R$ by flat $A$-algebras we prove that such a resolution can be found functorially and later on we show that our definitions will be independent of this choice.

\begin{prop}
Let $R$ be a Banach $A$-algebra. The strict epimorphism of Proposition \ref{prop:presentation} can be chosen to be given by
\begin{equation} \label{eq:canonical_resolution}
  A \lt (|r|^{-1} X_r)_{r \in R} \gt  \to R 
\end{equation}
and can be continued to a simplicial resolution of $R$ by projective Banach $A$-algebras. Moreover, this resolution is functorial in $R$.  
\end{prop}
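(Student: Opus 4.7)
The first assertion is an immediate specialisation of the proof of Proposition \ref{prop:presentation}: take the indexing set to be $R$ itself, pointed at $0$, with weights $r_i = |i|$ for each non-zero $i \in R$. The resulting map $\varphi_R \colon A \lt (|r|^{-1} X_r)_{r \in R} \gt \to R$, $X_r \mapsto r$, is a strict contracting epimorphism and the construction is visibly functorial in $R$. In fact, the universal property proved in Proposition \ref{prop:presentation}, namely
\[ \Hom_{\bComm(\bBan_A^{\le 1})}(A \lt (r_i)^{-1} X_i \gt, R) \cong \prod_{i \in I} R^{\le r_i}, \]
exhibits $F((X, |\minus|_X)) := A \lt (|x|_X^{-1} X_x)_{x \in X,\,x \ne \star} \gt$ as the left adjoint of the underlying-normed-set functor $U \colon \bComm(\bBan_A^{\le 1}) \to \bSN$, sending $R$ to $R$ pointed at $0$ and equipped with its Banach norm. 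The map $\varphi_R$ is then precisely the counit $\varepsilon_R \colon FU(R) \to R$.

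For the simplicial extension, the plan is to apply the standard cotriple (bar) construction to the comonad $G := FU$ on $\bComm(\bBan_A^{\le 1})$. Explicitly, set
\[ B_\bullet(R) \colon \quad \cdots \; G^3(R) \Rrightarrow G^2(R) \rightrightarrows G(R) \xrightarrow{\varepsilon_R} R, \]
with $B_n(R) = G^{n+1}(R)$, face maps given by the $n+1$ iterated copies of $\varepsilon$, and degeneracies given by the comultiplication $G \to G^2$ induced by the unit $\eta$ of the adjunction. Functoriality of $R \mapsto B_\bullet(R)$ is built in. Each $B_n(R) = F(UG^n(R))$ is a free commutative Banach $A$-algebra on a normed set; its underlying Banach module is a $\bBan_A^{\le 1}$-coproduct of Tate algebras and hence, by Proposition \ref{prop:tate_flat} and Proposition \ref{prop:c0_projective}, a direct summand of some $c^0(Y)$, in particular projective and flat (Proposition \ref{prop:projectives_are_flat}).

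To see that $\varepsilon_\bullet \colon B_\bullet(R) \to R$ is a weak equivalence in the model structure on $\bSimp(\bComm(\bBan_A^{\le 1}))$ coming from Theorem 3.7 of \cite{BaBeKr2}, the idea is to apply $U$: the augmented simplicial normed set $U(B_\bullet(R)) \to U(R)$ admits an extra degeneracy from $\eta_{U(R)} \colon U(R) \to UFU(R)$, so is simplicially contractible, and since the model structure is transferred along the monadic adjunction $F \dashv U$ the functor $U$ reflects weak equivalences. The main obstacle I anticipate is not the bar construction itself, which is formal, but the verification that $U$ is genuinely monadic and that the transferred model structure on $\bSimp(\bComm(\bBan_A^{\le 1}))$ interacts correctly with this non-additive $U$ in the quasi-abelian setting --- this is the delicate step needed both to make the extra-degeneracy argument rigorous and to guarantee that each $B_n(R)$ qualifies as ``projective'' in the sense demanded by the simplicial derived framework of Section \ref{sec:quasi_abelian}.
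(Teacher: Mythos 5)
Your proposal is correct and follows essentially the same route as the paper: the paper also identifies the map as the counit of the free--forgetful adjunction between (contracting) normed sets and commutative Banach $A$-algebras (factored there through $c^0 \dashv U$ and $S \dashv V$), deduces functoriality from that, and leaves the continuation to a simplicial resolution as the standard cotriple construction. Your write-up in fact supplies more detail than the paper does on the bar construction and the extra-degeneracy argument; the monadicity/transferred-model-structure concern you flag is real but is not addressed in the paper's proof either.
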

\begin{proof}
It is enough to check that the strict epimorphism \eqref{eq:canonical_resolution} is functorial. We notice that there are adjunctions
\[ c^0: \bNr^{\le 1} \leftrightarrows \bBan_A^{\le 1}: U \]
where $U$ is the forgetful functor from the category of Banach $A$-modules, $\bNr^{\le 1}$ is the contracting category of normed sets (\cf \cite[Proposition 2.3]{BaBeKr2}) and $c^0$ is the topologically-free Banach $A$-module functor as defined in equation \eqref{eq:topologically_free}, and
\[ S: \bBan_A^{\le 1} \leftrightarrows \bComm(\bBan_A^{\le 1}): V \]
where $U$ is the forgetful functor from the category of Banach $A$-algebras to the category of $A$-modules and $S$ is the symmetric algebra functor. By composition we obtain the adjunction $(U \circ V) \dashv (S \circ c^0)$ and
\[ S(c^0(U(V(R)))) =  A \lt (|r|^{-1} X_r)_{r \in R} \gt. \]
Then, the morphism \eqref{eq:canonical_resolution} is just obtained as the counit map of the adjunction.
\end{proof}

If the epimorphism of equation \eqref{eq:canonical_resolution} can be chosen such that $A \lt |r|^{-1} X_r \gt$ is an $A$-affinoid algebra for all $r \in R$, then it turns out that we can write
\begin{equation} \label{eq:canonical_presentation}
 R \cong {\colim_{i \in I}}^{\le 1} R_i 
\end{equation}
for some affinoid $A$-algebras $R_i$. From now on we will suppose that $R$ satisfies this hypothesis\footnote{This hypothesis is very weak as it is always satisfied if $A = k$ a valued field (as all affinoid $k$-algebras, in the sense of Berkovich, are filtered contracting colimits of strictly affinoid algebras). Moreover, this hypothesis  canbe  removed using the theory of reified spaces.}.

For such a presentation of $R$ it is clear that for any $f_1, \ldots, f_n \in R$ there exists a cofinal subdiagram of the colimit $J \subset I$ such that $f_0, f_1, \ldots, f_n \in R_j$ for all $j \in J$. Therefore, we can consider the idea of defining the rational localization of $R$ by $f_0, f_1, \ldots, f_n$ (for a family of elements that generates $R$ as an ideal) as
\[ R \l \lt \frac{f_1}{f_0}, \ldots, \frac{f_n}{f_0} \r \gt = {\colim_{j \in J}}^{\le 1} R_j \l \lt \frac{f_1}{f_0}, \ldots, \frac{f_n}{f_0} \r \gt. \]
Actually, this definition is equivalent to the usual one used in the theory of Huber spaces but the fact that ${\underset{j \in J}\colim}^{\le 1}$ is not a strictly exact functor implies that this operation will destroy the sheafyness properties of the $A$-affinoid localizations involved. As an intermediate step to remedy to this problem we compute these colimits in $\bBorn_A$ where colimits are strictly exact. The reason why this is convenient with respect to the straightforward computation of $\L {\underset{j \in J}\colim}^{\le 1}$ is that in general one does not have that 
\[ R \cong {\L \colim_{i \in I}}^{\le 1} R_i. \]
Therefore, in general, by computing $\L {\underset{j \in J}\colim}^{\le 1}$ we would get a derived analytic space $X$ for which $c(\LH^0(\Gamma(\cO_X))) = R$ but $\Gamma(\cO_X)$ is not concentrated in degree $0$. Instead, to obtain a derived analytic space whose global sections algebra is quasi-isomorphic to $R$, we take advantage of the fact that filtered colimits in $\bBorn_A$ are strictly exact by replacing the $\colim^{\le 1}$ with $\colim$ in $\bBorn_A$ and then we will define the structural sheaf on $R$ via a derived base change.

\begin{defn} \label{defn:R_born}
Let $R$ be a Banach ring over $A$ and let
\[ R \cong {\colim_{i \in I}}^{\le 1} R_i \]
as in \eqref{eq:canonical_presentation}. Then, we define
\[ R_\born = \colim_{i \in I} R_i \]
where the colimit is computed in $\bBorn_R$.
\end{defn}

We now introduce derived rational localizations of $R_\born$.

\begin{defn} \label{defn:R_born_rational_localizations}
Let $R$ be a Banach ring over $A$ and let $f_0, f_1, \ldots, f_n \in R$ be such that $(f_0, \ldots, f_n) = (1)$. Then, we define
\[ R_\born \l \lt \frac{f_1}{f_0}, \ldots, \frac{f_n}{f_0} \r \gt^h = K_{R_\born \lt X_1, \ldots, X_n \gt} (f_0 X_1 - f_1, \ldots, f_0 X_n - f_n) \]
to be the \emph{derived rational localization} of $R_\born$ by $(f_0, f_1, \ldots, f_n)$.
\end{defn}

%As mentioned before, Definition \ref{defn:R_born_rational_localizations} is well posed because for any finite family of elements $f_0, f_1, \ldots, f_n \in R$ we can find a cofinal subset $J \subset I$ such that $f_0, f_1, \ldots, f_n \in R_j$ for all $j \in J$.

\begin{rmk} \label{rmk:rational_localization}
Since the hypothesis that $A$ has a uniform unit implies that also $R$ has a uniform unit, it follows from \cite[Remark 2.4.7]{Ked1} that the rational subsets of $\Spa(R)$ can always be defined in terms of inequalities of the form
\[ \{ v \in \Spa(R) | v(f_i) \le v(f_0), (f_0, \ldots, f_n) = R  \} \]
in place of inequalities of the form of equation \eqref{eq:definition_rational}.
\end{rmk}

Similarly to Definition \ref{defn:weierstrass_laurent_localizations} one can define Weierstrass and Laurent localizations of $R_\born$. We omit the details of these definitions as they are not needed in the following proofs.

\begin{prop}
Let $R_\born \l \lt \frac{f_1}{f_0}, \ldots, \frac{f_n}{f_0} \r \gt^h$ be a derived rational localization of $R_\born$, then
\[ R_\born \l \lt \frac{f_1}{f_0}, \ldots, \frac{f_n}{f_0} \r \gt^h = \limind_{i \in J} \l ( R_i \l \lt \frac{f_1}{f_0}, \ldots, \frac{f_n}{f_0} \r \gt^h \r ) \]
where $J \subset I$ is a cofinal subset of $I$ for which $f_0, \ldots, f_n \in R_i$ for all $i \in J$.
\end{prop}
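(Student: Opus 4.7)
The plan is to unwind the Koszul-complex definition of the derived rational localization and then commute the filtered colimit past each building block. By Definition~\ref{defn:R_born_rational_localizations},
\[ R_\born \l \lt \frac{f_1}{f_0}, \ldots, \frac{f_n}{f_0} \r \gt^h = K_{R_\born \lt X_1, \ldots, X_n \gt}(f_0 X_1 - f_1, \ldots, f_0 X_n - f_n), \]
which by Notation~\ref{not:koszul_complexes} is an $n$-fold completed tensor product over $R_\born \lt X_1, \ldots, X_n \gt$ of two-term complexes whose underlying objects are just copies of $R_\born \lt X_1, \ldots, X_n \gt$. So everything reduces to controlling the Tate algebra $R_\born \lt X_1, \ldots, X_n \gt$ and the completed tensor product $\wotimes$ under filtered bornological colimits.

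First I would verify that the formation of Tate algebras commutes with filtered colimits in $\bBorn_A$. Since $R_\born \lt X_1, \ldots, X_n \gt \cong T_A(1,\ldots,1) \wotimes_A R_\born$ and $\wotimes_A$ is a left adjoint, it preserves all colimits; combined with $R_\born \cong \limind_{i \in J} R_i$ (by Definition~\ref{defn:R_born}, noting that $J \subset I$ is cofinal) this gives
\[ R_\born \lt X_1, \ldots, X_n \gt \cong \limind_{i \in J} R_i \lt X_1, \ldots, X_n \gt. \]
Next I would apply the same argument iteratively to each Koszul factor $K_{R_\born \lt X_1,\ldots,X_n \gt}(f_0 X_k - f_k)$, which as an object of $\bBorn_A$ is simply the two-term complex $[R_\born \lt X_1,\ldots,X_n \gt \to R_\born \lt X_1,\ldots,X_n \gt]$, hence is the filtered colimit over $J$ of the corresponding complexes $[R_i \lt X_1,\ldots,X_n \gt \to R_i \lt X_1,\ldots,X_n \gt]$.

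Finally, one assembles the $n$ Koszul factors via $\wotimes_{R_\born \lt X_1, \ldots, X_n \gt}$. Since this completed tensor product is again a left adjoint, it commutes with the filtered colimit, and since a filtered colimit of tensor products over a filtered colimit of base rings is the tensor product over the colimit base ring (this is the standard identity used already in the proof of Proposition~\ref{prop:filtered_colimits_homotopy_epis}), one obtains
\[ K_{R_\born \lt X_1, \ldots, X_n \gt}(f_0 X_\bullet - f_\bullet) \cong \limind_{i \in J} K_{R_i \lt X_1, \ldots, X_n \gt}(f_0 X_\bullet - f_\bullet), \]
which is the claimed identity.

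The main potential obstacle is making sure the colimit really is the correct one, i.e.\ that we are using that filtered colimits in $\bBorn_A$ are strictly exact (so that no higher $\L\limind$ terms appear and the colimit commutes with the formation of the Koszul complex as an object of $\bBorn_A$, not merely of the derived category). This is precisely the reason the authors pass from $\limind^{\le 1}$ to the bornological $\limind$ in Definition~\ref{defn:R_born}, as emphasized in Remark~\ref{rmk:filtered_colimits_not_exact} and used in the proof of Proposition~\ref{prop:ind_born_monoidal_equivalent}; once this is invoked the rest of the argument is formal.
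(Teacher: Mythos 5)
Your proposal is correct and follows essentially the same route as the paper's proof: both reduce to the identification $R_\born \lt X_1,\ldots,X_n\gt \cong R_\born \wotimes_A A\lt X_1,\ldots,X_n\gt$, the fact that $\wotimes_A$ commutes with (filtered) colimits together with flatness of the Tate algebra over $A$, and the strict exactness of filtered colimits in $\bBorn_A$ to pass the colimit through the Koszul complex at the level of $\bBorn_A$ rather than merely in the derived category. Your unpacking of the $n$ Koszul factors and the explicit appeal to the identity from Proposition \ref{prop:filtered_colimits_homotopy_epis} is just a more granular presentation of the same argument.
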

\begin{proof}
As by definition 
\[ R_\born \lt X_1, \ldots, X_n \gt \cong R_\born \wotimes_A A \lt X_1, \ldots, X_n \gt \]
and $A \lt X_1, \ldots, X_n \gt$ is flat over $A$ (\cf Proposition \ref{prop:tate_flat}) we have that
\[ \limind_{i \in J} \l ( R_i \l \lt \frac{f_1}{f_0}, \ldots, \frac{f_n}{f_0} \r \gt^h \r ) = \limind_{i \in J} K_{R_i \wotimes_A A \lt X_1, \ldots, X_n \gt} (f_0 X_1 - f_1, \ldots, f_0 X_n - f_n) \cong \]
\[ \cong K_{\underset{i \in J}\limind R_i \wotimes_A A \lt X_1, \ldots, X_n \gt} (f_0 X_1 - f_1, \ldots, f_0 X_n - f_n) \cong K_{R_\born \wotimes_A A \lt X_1, \ldots, X_n \gt} (f_0 X_1 - f_1, \ldots, f_0 X_n - f_n) \]
because filtered colimits are strictly exact in $\bBorn_A$ and commute with tensor products and $A \lt X_1, \ldots, X_n \gt$ is flat. The existence of the subset $J \subset I$ comes from the fact that we identify $I = R$.
\end{proof}

The fact that filtered colimits in $\bBorn_A$ are strictly exact permits to deduce easily the following proposition.

\begin{prop} \label{prop:bornological_localizations}
Let 
\[ \phi: R_\born \to R_\born \l \lt \frac{f_1}{f_0}, \ldots, \frac{f_n}{f_0} \r \gt^h \]
be a derived rational localization, then $\phi$ is a homotopy epimorphism and $R_\born \l \lt \frac{f_1}{f_0}, \ldots, \frac{f_n}{f_0} \r \gt^h$ is strictly Koszul regular. 
Moreover, for any other derived rational localization $R_\born \to R'$ we have that
\[ R_\born \l \lt \frac{f_1}{f_0}, \ldots, \frac{f_n}{f_0} \r \gt^h \wotimes_{R_\born}^\L R' = R_\born \l \lt \frac{f_1}{f_0}, \ldots, \frac{f_n}{f_0} \r \gt^h \wotimes_{R_\born} R'. \]
\end{prop}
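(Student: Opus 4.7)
The plan is to reduce all three statements to the affinoid case (Section~\ref{sec:strongly_noetherian}) by writing the derived rational localization of $R_\born$ as a filtered bornological colimit of derived rational localizations of the affinoids $R_i$, and then transporting the needed properties through the colimit using the strict exactness of filtered colimits in $\bBorn_A$.

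First, I would apply the proposition just established to write
\[ R_\born \l \lt \frac{f_1}{f_0}, \ldots, \frac{f_n}{f_0} \r \gt^h \cong \limind_{i \in J} R_i \l \lt \frac{f_1}{f_0}, \ldots, \frac{f_n}{f_0} \r \gt^h \]
for a cofinal subset $J \subset I$ with $f_0, \ldots, f_n \in R_i$ for all $i \in J$. By Corollary \ref{cor:rational_localizations_ergular} each $R_i \l \lt \frac{f_1}{f_0}, \ldots, \frac{f_n}{f_0} \r \gt^h$ is strictly Koszul regular, so it is concentrated in degree $0$ and $\LH^0$ lies in $\bBan_{R_i}$. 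Since filtered colimits in $\bBorn_A$ are strictly exact (hence commute with $\LH^n$ on each level), the regularity and strictness pass to the colimit, giving assertion (2).

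For the homotopy epimorphism claim (1), I would invoke Proposition \ref{prop:rational_localizations_homotpy_epi} to see that each $R_i \to R_i \l \lt \frac{f_1}{f_0}, \ldots, \frac{f_n}{f_0} \r \gt^h$ is a homotopy epimorphism, and then apply Proposition \ref{prop:filtered_colimits_homotopy_epis} to conclude that the filtered bornological colimit $R_\born \to R_\born \l \lt \frac{f_1}{f_0}, \ldots, \frac{f_n}{f_0} \r \gt^h$ is a homotopy epimorphism as well.

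For the transversality of derived intersection (3), given another derived rational localization $R_\born \to R'$, say defined by parameters $g_0, \ldots, g_m$, I would refine $J$ to a cofinal $J' \subset I$ on which both families of parameters live, so that $R' \cong \limind_{i \in J'} R_i'$ with $R_i'$ the corresponding derived rational localization of $R_i$. Then I would compute
\[ R_\born \l \lt \frac{f_1}{f_0}, \ldots, \frac{f_n}{f_0} \r \gt^h \wotimes_{R_\born}^\L R' \cong \limind_{i \in J'} \l ( R_i \l \lt \frac{f_1}{f_0}, \ldots, \frac{f_n}{f_0} \r \gt^h \wotimes_{R_i}^\L R_i' \r ) \]
using that $\wotimes^\L$ commutes with filtered (homotopy) colimits and that each tensor factor is flat (since the localizations are Koszul regular with free Koszul resolutions over $R_i \lt X \gt$, which is flat over $R_i$ by Proposition \ref{prop:tate_flat}). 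Applying Proposition \ref{prop:derived_intersection_rational_affinoid} at each $i$ replaces $\wotimes_{R_i}^\L$ by $\wotimes_{R_i}$, and then strict exactness of filtered colimits in $\bBorn_A$ lets me pull the colimits back together to obtain $R_\born \l \lt \frac{f_1}{f_0}, \ldots, \frac{f_n}{f_0} \r \gt^h \wotimes_{R_\born} R'$.

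The main subtlety I expect is verifying that the derived tensor product is indeed compatible with the bornological filtered colimit presentation: one needs to know both that the transition maps $R_i \to R_j$ and the localization morphisms stay in a class of flat morphisms so that $\L\limind$ agrees with $\limind$ on the tensor products, and that the Koszul complexes really furnish honest flat resolutions when we base-change along the colimit. Once Proposition \ref{prop:filtered_colimits_homotopy_epis} and the strict exactness of $\limind$ in $\bBorn_A$ are in play, however, this becomes a direct bookkeeping exercise rather than a genuine obstacle.
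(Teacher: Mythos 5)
Your proposal follows essentially the same route as the paper: write the localization as a filtered bornological colimit of affinoid derived rational localizations, then invoke Proposition \ref{prop:rational_localizations_homotpy_epi} together with Proposition \ref{prop:filtered_colimits_homotopy_epis} for the homotopy-epimorphism claim, Corollary \ref{cor:rational_localizations_ergular} plus strict exactness of filtered colimits in $\bBorn_A$ for strict Koszul regularity, and commutation of colimits with tensor products for the intersection claim. The only detail the paper makes explicit that you gloss over is that one must also pass to a cofinal subsystem containing coefficients $g_0,\ldots,g_n$ with $f_0 g_0 + \cdots + f_n g_n = 1$, so that $(f_0,\ldots,f_n)$ generates the unit ideal in each $R_j$ and the affinoid derived rational localizations are actually defined.
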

\begin{proof}
Writing 
\[ R_\born \cong \limind_{i \in I} R_i, \]
where $R_i$ are affinoid $A$-algebra as in \eqref{eq:canonical_presentation}, there is a cofinal $J \subset I$ such that $f_0, \ldots, f_n \in R_j$ for all  $j \in J$. 
If $(f_0, f_1, \ldots, f_n) = 1$ then there exists $g_0, g_1, \ldots, g_n \in R$ such that
\[ f_0 g_0 + \cdots + f_n g_n = 1. \]
Therefore, we there exists a cofinal $J' \subset J$ such that for all $j \in J'$ one has that $f_0, \ldots, f_n, g_0, g_1, \ldots, g_n \in R_j$ and hence the derived rational localization $R_j  \l \lt \frac{f_1}{f_0}, \ldots, \frac{f_n}{f_0} \r \gt^h$ is well defined.
For such $R_j$ we can consider the derived rational localization $R_j \lt \frac{f_1}{f_0}, \ldots, \frac{f_n}{f_0} \gt^h$ and by Proposition \ref{prop:rational_localizations_homotpy_epi} we have that
\[ \phi_j: R_j \to R_j \l \lt \frac{f_1}{f_0}, \ldots, \frac{f_n}{f_0} \r \gt^h \]
is a homotopy epimorphism. Hence by Proposition \ref{prop:filtered_colimits_homotopy_epis} derived rational localizations of $R_\born$ are homotopy epimorphisms.
The fact that $R_\born \l \lt \frac{f_1}{f_0}, \ldots, \frac{f_n}{f_0} \r \gt^h$ is strictly Koszul regular follows from Corollary \ref{cor:rational_localizations_ergular} and the fact that filtered direct limits are strictly exact in $\bBorn_A$. The claim about the derived self intersection follows immediately from the commutation of colimits and the tensor product.
\end{proof}

Let $U(\frac{f_1}{f_0}, \ldots, \frac{f_n}{f_0}) \subset \Spa(R)$ be the rational subset determined by the elements $f_0, f_1, \ldots, f_n \in R$, then the association $U(\frac{f_1}{f_0}, \ldots, \frac{f_n}{f_0}) \mapsto R_\born \l \lt \frac{f_1}{f_0}, \ldots, \frac{f_n}{f_0} \r \gt^h$ does not define a derived pre-sheaf on $\Spa(R)$. Intuitively, this is because $\Spa(R_\born)$ is the pro-analytic space $``\limpro" \Spa(R_i)$ whereas $\Spa(R) = \limpro \Spa(R_i)$ (\cf \cite[Remark 2.6.3]{Ked1}). Nevertheless, $R_\born$ and its derived rational localizations will be useful for computing the derived rational localizations of $R$. We will give a more precise description of $\Spa(R_\born)$ in the next section where we will study adic spectra of multiplicatively convex bornological rings.

\begin{defn} \label{defn:rational_localizations}
Let $R$ be a Banach ring over $A$ and let $f_0, f_1, \ldots, f_n \in R$ be such that $(f_0, \ldots, f_n) = (1)$. Then, we define
\[ R \l \lt \frac{f_1}{f_0}, \ldots, \frac{f_n}{f_0} \r \gt^h = R_\born \l \lt \frac{f_1}{f_0}, \ldots, \frac{f_n}{f_0} \r \gt^h \wotimes_{R_\born}^\L R \]
to be the \emph{derived rational localization} of $R$ by $(f_0, f_1, \ldots, f_n)$.
\end{defn}

\begin{rmk}
Notice that $(\minus) \wotimes_{R_\born} R$ is not exact and so the derived rational localization may not be concentrated in degree $0$ or they may have derived intersection.
\end{rmk}

At this point the derived rational localizations of $R$ are nothing new, if we think of them as Koszul dg-algebras as in the following proposition.

\begin{prop} \label{prop:rational_localizations_as_koszul}
Let $R$ be a Banach ring over $A$ and let $f_0, f_1, \ldots, f_n \in R$ be such that $(f_0, \ldots, f_n) = R$. Then, 
\begin{equation} \label{eq:rational_localizations_as_koszul}
R \l \lt \frac{f_1}{f_0}, \ldots, \frac{f_n}{f_0} \r \gt^h \cong K_{R \lt X_1, \ldots, X_n \gt}(f_0 X_1 - f_1, \ldots, f_0 X_n - f_n) 
\end{equation}
in the homotopy category of simplicial Banach $R$-algebras. 
\end{prop}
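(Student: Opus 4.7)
The plan is to directly compute the derived base change on the right-hand side of Definition \ref{defn:rational_localizations} and identify the result with the Koszul complex on the right-hand side of \eqref{eq:rational_localizations_as_koszul}. First, unwinding Definition \ref{defn:R_born_rational_localizations} gives
\[ R \l \lt \frac{f_1}{f_0}, \ldots, \frac{f_n}{f_0} \r \gt^h = K_{R_\born \lt X_1, \ldots, X_n \gt}(f_0 X_1 - f_1, \ldots, f_0 X_n - f_n) \wotimes_{R_\born}^\L R, \]
so the task reduces to showing that this derived base change is quasi-isomorphic to $K_{R \lt X_1, \ldots, X_n \gt}(f_0 X_1 - f_1, \ldots, f_0 X_n - f_n)$ as commutative dg-algebras, and hence (via the monoidal Dold-Kan correspondence) as simplicial Banach $R$-algebras.

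Second, I would argue that each term of the Koszul complex $K_{R_\born \lt X_1, \ldots, X_n \gt}(f_0 X_i - f_i)$ is flat over $R_\born$, so that the derived tensor product with $R$ collapses to the underived one. This uses the identification
\[ R_\born \lt X_1, \ldots, X_n \gt = R_\born \wotimes_A A \lt X_1, \ldots, X_n \gt, \]
together with the flatness of $A \lt X_1, \ldots, X_n \gt$ over $A$ established in Proposition \ref{prop:tate_flat}, and the fact that flatness is stable under base change. Each term of the Koszul complex is a finite direct sum of copies of $R_\born \lt X_1, \ldots, X_n \gt$ and is therefore flat over $R_\born$ as well.

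Third, I would compute the underived base change. By associativity of $\wotimes$,
\[ R_\born \lt X_1, \ldots, X_n \gt \wotimes_{R_\born} R \cong (R_\born \wotimes_A A \lt X_1, \ldots, X_n \gt) \wotimes_{R_\born} R \cong R \wotimes_A A \lt X_1, \ldots, X_n \gt = R \lt X_1, \ldots, X_n \gt. \]
This identification intertwines the Koszul differentials of the two complexes because $f_0, \ldots, f_n$ all lie in $R$, so multiplication by $f_0 X_i - f_i$ is preserved. Since base change is functorial for commutative dg-algebras and the monoidal Dold-Kan correspondence passes to simplicial algebras, this produces the desired quasi-isomorphism in the homotopy category of simplicial Banach $R$-algebras.

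The main technical obstacle will be making precise that the flatness of $A \lt X_1, \ldots, X_n \gt$ over $A$ in $\bBan_A$ indeed transports to flatness of $R_\born \lt X_1, \ldots, X_n \gt$ over $R_\born$ in $\bBorn_A$. This should follow from Proposition \ref{prop:ind_born_monoidal_equivalent} together with the strict exactness of filtered colimits in $\bBorn_A$, but one has to carefully track which flavor of exactness (strict vs.\ non-strict) is being invoked at each step. A cleaner alternative route is to write $R_\born = \limind_{i \in I} R_i$ as a filtered colimit of affinoid $A$-algebras as in \eqref{eq:canonical_presentation}, apply the affinoid results of Subsection \ref{sec:strongly_noetherian} level-by-level, and then commute the filtered colimit past the derived tensor product using strict exactness of $\limind$ in $\bBorn_A$.
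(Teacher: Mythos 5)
Your proposal is correct and follows essentially the same route as the paper, whose proof simply says the result ``follows immediately by writing the base change of the complex'' $K_{R_\born \lt X_1, \ldots, X_n \gt}(f_0X_1-f_1,\ldots,f_0X_n-f_n)$; you are filling in exactly the implicit steps (each term is topologically free, hence projective and $\wotimes$-acyclic over $R_\born$, so the derived base change is computed termwise and each term becomes $R\lt X_1,\ldots,X_n\gt$ with the differentials preserved). Your worry about transporting flatness is resolved most directly by noting that $R_\born\lt X_1,\ldots,X_n\gt \cong c^0(\N^n,|\minus|_{1,\ldots,1})$ over $R_\born$ is itself a topologically free module, so Proposition \ref{prop:projectives_are_flat} and Corollary \ref{cor:projectives_are_acyclic} apply without any colimit argument.
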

\begin{proof}
Equation \eqref{eq:rational_localizations_as_koszul} follows immediately by writing the base change of the complex
\[ R_\born \l \lt \frac{f_1}{f_0}, \ldots, \frac{f_n}{f_0} \r \gt^h =  K_{R_\born \lt X_1, \ldots, X_n \gt} (f_0 X_1 - f_1, \ldots, f_0 X_n - f_n). \]
\end{proof}

Proposition \ref{prop:rational_localizations_as_koszul} tells us that the derived rational localizations we defined in Definition \ref{defn:rational_localizations} are formally the same as the ones defined for affinoid $A$-algebras so far. The main difference is that for a general Banach ring the Koszul dg-algebra associated to a rational subset is not concentrated in degree $0$, in general, and therefore it is not uniquely determined by isomorphism class but it is determined up to quasi-isomorphism. The first step in our study of derived rational localizations of $R$ is to notice that the canonical morphism $R \to R \l \lt \frac{f_1}{f_0}, \ldots, \frac{f_n}{f_0} \r \gt^h$ is an open localization for the homotopy Zariski topology.

\begin{prop} \label{prop:banach_homotopy_epis}
Let $R$ be a Banach ring over $A$ and let $f_0, f_1, \ldots, f_n \in R$ be such that $(f_0, \ldots, f_n) = R$. Then, the morphism
\[ R \to R \l \lt \frac{f_1}{f_0}, \ldots, \frac{f_n}{f_0} \r \gt^h \]
is a homotopy epimorphism.
\end{prop}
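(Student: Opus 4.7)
The approach I would take is simply to reduce the statement to the analogous result already proved at the bornological level. Proposition \ref{prop:bornological_localizations} gives that
\[ \phi: R_\born \to R_\born \l \lt \frac{f_1}{f_0}, \ldots, \frac{f_n}{f_0} \r \gt^h \]
is a homotopy epimorphism, and by Definition \ref{defn:rational_localizations} the object $R \l \lt \frac{f_1}{f_0}, \ldots, \frac{f_n}{f_0} \r \gt^h$ is by construction the derived base change
\[ R_\born \l \lt \frac{f_1}{f_0}, \ldots, \frac{f_n}{f_0} \r \gt^h \wotimes_{R_\born}^\L R \]
of $\phi$ along the canonical map $R_\born \to R$ induced by the contracting colimit presentation $R \cong \colim^{\le 1} R_i$. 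Hence the morphism $R \to R \l \lt \frac{f_1}{f_0}, \ldots, \frac{f_n}{f_0} \r \gt^h$ that we are asked to analyze is exactly the pushout of $\phi$ along $R_\born \to R$ in the homotopy category $\bComm(\bC)$.

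It then suffices to invoke Corollary \ref{cor:tensor_pushouts}, whose content is precisely that homotopy Zariski open immersions (equivalently, homotopy epimorphisms, by Proposition \ref{prop:tensor_pushouts} identifying the coproduct in $\bComm(\bC)$ with $\wotimes^\L$) are stable under derived base change. Applying this stability to $\phi$ with respect to $R_\born \to R$ yields that the induced map
\[ R \to R_\born \l \lt \frac{f_1}{f_0}, \ldots, \frac{f_n}{f_0} \r \gt^h \wotimes_{R_\born}^\L R \]
is again a homotopy epimorphism, which is the desired conclusion.

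I do not foresee any real obstacle: everything has been arranged so that the proof is a one-line base change argument. The only thing one might wish to stress, to make the logic transparent, is that by Definition \ref{defn:rational_localizations} the new object $R \l \lt \frac{f_1}{f_0}, \ldots, \frac{f_n}{f_0} \r \gt^h$ is \emph{defined} as the derived pushout and no independent construction needs to be compared with it; all the nontrivial content has already been absorbed into Proposition \ref{prop:bornological_localizations}, where the filtered-colimit strict exactness of $\bBorn_A$ together with Proposition \ref{prop:filtered_colimits_homotopy_epis} and the affinoid case (Proposition \ref{prop:rational_localizations_homotpy_epi}) combined to give the homotopy epimorphism property of $\phi$.
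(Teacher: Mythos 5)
Your proof is correct and is essentially identical to the paper's own argument: both deduce the claim from Proposition \ref{prop:bornological_localizations} together with the stability of homotopy epimorphisms under derived base change (Corollary \ref{cor:tensor_pushouts}), using that Definition \ref{defn:rational_localizations} makes the Banach localization the derived pushout of the bornological one along $R_\born \to R$. Your write-up merely makes the base-change step more explicit than the paper's two-line proof.
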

\begin{proof}
By Corollary \ref{cor:tensor_pushouts} derived tensor products preserve homotopy epimorphisms. Therefore the claim follows from Proposition \ref{prop:bornological_localizations}.
\end{proof}

\begin{defn} \label{defn:standard_rational_covering}
Let $R$ be a Banach ring over $A$ and let $f_0, f_1, \ldots, f_n \in R$ be such that $(f_0, \ldots, f_n) = (1)$. Then, the \emph{standard rational covering} of $R$ associated to $f_0, \ldots, f_n$ is given by the family of morphisms
\[ \l \{ R \to R \l \lt \frac{f_0}{f_i}, \cdots, \frac{f_n}{f_i} \r \gt^h \r \}_{0 \le i \le n}. \]
\end{defn}

\begin{prop} \label{prop:coverings}
Let $R$ be a Banach ring over $A$ and let $f_0, f_1, \ldots, f_n \in R$ be such that $(f_0, \ldots, f_n) = R$. The standard rational cover of $R$ associated to $f_0, \ldots, f_n$ is a cover for the (formal) homotopy Zariski topology.
\end{prop}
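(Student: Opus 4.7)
The plan is to invoke the derived Tate--\v{C}ech acyclicity criterion (Theorem \ref{thm:derived_Tate}). Each of the maps $R \to R\l\lt \frac{f_0}{f_i}, \ldots, \frac{f_n}{f_i}\r\gt^h$ is already a homotopy epimorphism by Proposition \ref{prop:banach_homotopy_epis}, so the only remaining content is strict acyclicity of the associated derived Tate--\v{C}ech complex. The strategy proceeds via two reductions: first from $R$ to the auxiliary bornological ring $R_\born$, then from $R_\born$ to its defining affinoid approximations $R_i$, where Theorem \ref{thm:tate_acyclicity_affinoid} already provides the desired acyclicity.

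For the first reduction, I would use Definition \ref{defn:rational_localizations}, which identifies each derived rational localization $R\l\lt\frac{f_0}{f_i},\ldots,\frac{f_n}{f_i}\r\gt^h$ as the derived base change of the bornological one $R_\born\l\lt\frac{f_0}{f_i},\ldots,\frac{f_n}{f_i}\r\gt^h$ along $R_\born \to R$. By associativity of $\wotimes^\L$, every finite derived intersection appearing in the Tate--\v{C}ech complex for $R$ is itself the derived base change along $R_\born \to R$ of the corresponding intersection for $R_\born$. Since strict acyclicity of a complex is equivalent to it being a zero object in $D(\bBorn_R)$ (via vanishing of all $\LH^n$), and the derived functor $(\minus) \wotimes_{R_\born}^\L R$ preserves zero objects, it is enough to show that the Tate--\v{C}ech complex for the standard rational cover of $R_\born$ is strictly acyclic.

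For the second reduction, I would write $R_\born \cong \colim_{i \in I} R_i$ with $R_i$ affinoid $A$-algebras as in \eqref{eq:canonical_presentation}. Pick $g_0, \ldots, g_n \in R$ with $\sum_k f_k g_k = 1$; these lie in some $R_{j_0}$, so on a cofinal subset $J \subset I$ both the $f_k$ and the $g_k$ belong to $R_j$, and hence $(f_0, \ldots, f_n) = R_j$ already. For each such $j$, Theorem \ref{thm:tate_acyclicity_affinoid} yields strict acyclicity of the Tate--\v{C}ech complex for the standard rational cover of $R_j$. The Tate--\v{C}ech complex for $R_\born$ is then the filtered colimit of these, using the earlier identification of $R_\born\l\lt \cdots \r\gt^h$ with $\limind_{j \in J} R_j\l\lt \cdots \r\gt^h$, the commutation of $\wotimes^\L$ with filtered colimits, and finally the strict exactness of filtered colimits in $\bBorn_A$, which transports the strict acyclicity across the colimit.

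The step I expect to be most delicate is the first reduction, where one must justify that strict acyclicity is preserved by the derived base change $(\minus) \wotimes^\L_{R_\born} R$: the point is precisely that this is a property invariant in the derived category, a fact that motivates the left-heart machinery developed in Section \ref{sec:quasi_abelian}. The remainder of the proof is then bookkeeping of tensor products and filtered colimits, made possible by the deliberate replacement of the contracting colimit that computes $R$ by the strictly exact bornological colimit that computes $R_\born$.
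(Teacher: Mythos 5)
Your proposal is correct and follows essentially the same route as the paper: both arguments use the cofinality trick to place $f_0,\ldots,f_n$ together with a partition of unity inside the affinoid approximations $R_j$, invoke Theorem \ref{thm:tate_acyclicity_affinoid} there, and then transport strict acyclicity through the strictly exact filtered colimit computing $R_\born$ and the derived base change $(\minus)\wotimes_{R_\born}^\L R$, which is exactly the chain of quasi-isomorphisms in the paper's proof. The only difference is presentational: you separate the two reductions, while the paper performs them in a single displayed identification of the total complexes.
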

\begin{proof}
For a cofinal $J \subset I$ the elements $f_0, \ldots, f_n$ give a standard rational cover of $R_i$ for all $i \in J$ (this follows by the same reasoning of Proposition \ref{prop:bornological_localizations}). This implies that the derived Tate-\u{C}ech complex is exact as
\[ \Tot \l ( 0 \to R \to \prod_{j} R \l \lt \frac{f_0}{f_j}, \ldots, \frac{f_n}{f_j} \r \gt^h \to \cdots \r ) \]
is quasi-isomorphic to
\[ \Tot \l ( 0 \to (\limind_{i \in J} R_i) \wotimes_{R_\born}^\L R \to \prod_{j} \l( \limind_{i \in J} R_i \l \lt \frac{f_0}{f_j}, \ldots, \frac{f_n}{f_j} \r \gt^h \r ) \wotimes_{R_\born}^\L R \to \cdots \r ) \]
that is quasi-isomorphic to
\[ \Tot \l ( \l ( \limind_{i \in J} \l ( 0 \to R_i \to \prod_{j} R_i \l \lt \frac{f_0}{f_j}, \ldots, \frac{f_n}{f_j} \r \gt^h \to \cdots \r ) \r) \wotimes_{R_\born}^\L R \r )  \]
because $J$ is filtered and colimits commute with tensor products. Therefore, the derived Tate-\u{C}ech complex of the standard rational cover induced by $f_0, \ldots, f_n$ is quasi-isomorphic to derived tensor product of a direct limit of strictly acyclic complexes, hence it is a strictly acyclic complex.
\end{proof}

Now, Proposition \ref{prop:coverings} combined with Theorem \ref{thm:deligne_homotopy} imply that the family of all derived rational localizations of $R$ form a  $\infty$-site that is a (geometric) quotient of $\Zar_R$. We would like to identify this $\infty$-site with the Huber spectrum of $R$ but (probably) in general these two spaces do not agree. We now proceed to the definition of the homotopical Huber spectrum of $R$ and to its comparison with the classical Huber spectrum.

\subsection{The homotopical Huber spectrum of a Banach ring}

We are now ready to define and to study the homotopical Huber spectrum. We will define two different flavours of homotopical Huber spectra, strictly related to each other.

\begin{defn} \label{defn:homotopical_huber_spectrum}
We define the \emph{homotopical Huber-Zariski spectrum} of $R$, denoted by $\Spa_\Zar^h(R)$ as the $\infty$-site determined by the family of derived rational localizations (as defined in Definition \ref{defn:rational_localizations}) with the same covers of the homotopy Zariski topology (\ie finite families of conservative derived rational localizations). We define the \emph{standard homotopical Huber spectrum} of $R$, denoted by $\Spa_\Rat^h(R)$ as the $\infty$-site determined by the family of derived rational localizations with covers given by the standard rational covers.
\end{defn}

We mention that the data of a $\infty$-Grothendieck topology on a $\infty$-category is equivalent to the data of a Grothendieck topology on the homotopy category, therefore, although our statements use the more powerful language of $\infty$-categories, in Definition \ref{defn:homotopical_huber_spectrum} and the discussion before we are just considering Grothendieck topologies on the respective homotopy categories (\cf \cite[Remark 6.2.2.3]{HTT} for a discussion of this fact). Therefore, to $\Spa_\Zar^h(R)$ and $\Spa_\Rat^h(R)$ we can associate classical sites that we denote by $|\Spa_\Zar^h(R)|$ and $|\Spa_\Rat^h(R)|$.

Proposition \ref{prop:coverings} directly implies that there is a continuous morphism of $\infty$-sites
\[ \Spa_\Zar^h(R) \to \Spa_\Rat^h(R) \]
just given by the identity functor. Although we do not have a counterexample, we do not expect this morphism to be an equivalence of sites in general. But it is in some special cases.

\begin{prop} \label{prop:affinoid_coverscoincide}
Let $R$ be an affinoid $k$-algebra, where $k$ is a non-Archimedean valued field. Then, the canonical map $\Spa_\Zar^h(R) \to \Spa_\Rat^h(R)$ is a homeomorphism. Moreover, in this case one has that
\[ \Spa(R) \cong \Spa_\Zar^h(R). \]
\end{prop}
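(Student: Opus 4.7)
The plan is to reduce both assertions to the classical theory of adic spectra via the structural results of Section \ref{sec:strongly_noetherian}. Since $k$ is a non-Archimedean valued field it is in particular a strongly Noetherian Tate ring, so those results apply. By Corollary \ref{cor:rational_localizations_ergular}, every derived rational localization of $R$ is strictly Koszul regular, hence concentrated in degree $0$ and equivalent (as a simplicial Banach algebra) to the classical rational localization $R\langle \frac{f_1}{f_0},\ldots,\frac{f_n}{f_0}\rangle$. By Proposition \ref{prop:derived_intersection_rational_affinoid} derived tensor products of such objects coincide with underived ones. Together these facts show that the underlying $\infty$-category of both $\Spa_\Zar^h(R)$ and $\Spa_\Rat^h(R)$ is equivalent to the ordinary poset of rational subsets of the topological space $|\Spa(R)|$.

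For the first assertion, since the two sites share the same underlying category I only need to compare their covers. Standard rational covers are Zariski covers by Proposition \ref{prop:coverings}, which gives the continuous map $\Spa_\Zar^h(R)\to\Spa_\Rat^h(R)$ as the identity on objects. For the converse direction, Theorem \ref{thm:tate_acyclicity_affinoid} identifies Zariski covers by derived rational localizations with finite families of rational localizations $\{R\to R_i\}$ whose associated rational subsets $\{U_i\}$ cover $|\Spa(R)|$. The classical refinement result for adic spectra (see e.g.\ \cite{Hub}) then states that any such rational cover of $|\Spa(R)|$ admits a refinement by a standard rational cover. Hence the two topologies generate the same Grothendieck topology, and the comparison morphism is an equivalence of sites.

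For the second assertion, view $\Spa(R)$ as the site whose objects are the rational subsets of $|\Spa(R)|$ and whose covers are the classical rational covers. The assignment
\[
U\!\left(\tfrac{f_1}{f_0},\ldots,\tfrac{f_n}{f_0}\right)\ \longmapsto\ R\!\left\langle \tfrac{f_1}{f_0},\ldots,\tfrac{f_n}{f_0}\right\rangle^{h}
\]
defines a functor $\Spa(R)\to \Spa_\Zar^h(R)$, which is essentially surjective by construction and fully faithful by Corollary \ref{cor:rational_localizations_ergular} combined with Proposition \ref{prop:derived_intersection_rational_affinoid}. Covers match via Theorem \ref{thm:tate_acyclicity_affinoid}, so this functor induces the desired equivalence of sites.

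The substantive work has already been carried out in Section \ref{sec:strongly_noetherian} (strict Koszul regularity of rational localizations, transversality of derived intersections, and Tate acyclicity). The only external input is the classical rational-cover refinement lemma, which I expect to be the main point requiring a careful citation rather than new proof; the rest is bookkeeping.
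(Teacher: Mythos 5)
Your argument is correct, and in substance it is the same proof that the paper delegates entirely to the citation of \cite[Theorem 5.39]{BeKr}: the paper's own ``proof'' is one line, and the remark immediately following it confirms that all the ingredients you use (strict Koszul regularity, transversality of derived intersections, Tate acyclicity) are exactly the pieces of \emph{loc.\ cit.}\ that have been re-proved in Section \ref{sec:strongly_noetherian}, with the sole exception of the refinement step. You have correctly isolated that step --- every finite rational cover of $\Spa(R)$ is refined by a standard rational cover --- as the one external input; it is precisely \cite[Lemma 5.32]{BeKr}, and the paper explicitly flags it as the only obstruction to extending the proposition from base fields to general strongly Noetherian Tate rings. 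So your identification of where the real content lies matches the authors' own assessment.

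Two small points of precision. First, the claim that the functor from the poset of rational subsets of $|\Spa(R)|$ to $\Spa_\Zar^h(R)$ is fully faithful does not follow from Corollary \ref{cor:rational_localizations_ergular} and Proposition \ref{prop:derived_intersection_rational_affinoid} alone: those results show that the derived localizations are discrete and agree with the classical ones, but identifying $\Hom_R(R_V,R_U)$ with a point when $U\subseteq V$ (and the empty set otherwise) requires the universal property of rational localizations in Huber's sense, which should be cited separately. Second, when comparing topologies you should note that a Grothendieck topology is generated by the standard rational covers, so ``refinement'' must be understood as allowing iterated compositions of standard covers; this is how the classical refinement lemma is actually stated and used, and it is harmless, but worth saying when you turn the sketch into a full argument.
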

\begin{proof}
This has been proved in \cite[Theorem 5.39]{BeKr}. 
\end{proof}

We notice the only step missing in generalizing Proposition \ref{prop:affinoid_coverscoincide} to the case when $A$ is any strongly Noetherian Banach-Tate ring is a generalization of \cite[Lemma 5.32]{BeKr}, as all the other main results of \lc have been generalized in this work. This amounts to check that every cover for the homotopy Zariski topology of $\Spa_\Zar^h(R)$ is refined by a standard rational cover. It is possible that such a generalization holds true.

The next proposition shows that, in general, both $|\Spa_\Zar^h(R)|$ and $|\Spa_\Rat^h(R)|$ have enough points.

\begin{prop} \label{prop:homotopical_huber_spectrum_spectral}
The topoi associated to $|\Spa_\Zar^h(R)|$ and $|\Spa_\Rat^h(R)|$ are coherent and hence equivalent to the ones of spectral topological spaces.
\end{prop}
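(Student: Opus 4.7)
The plan is to verify that the $1$-sites $|\Spa_\Zar^h(R)|$ and $|\Spa_\Rat^h(R)|$ are coherent sites in the sense of SGA 4, and then to invoke the classical equivalence (Deligne's theorem together with Stone-type duality) between coherent topoi and topoi of sheaves on spectral topological spaces. Coherence amounts to checking that the underlying category is essentially small, has a terminal object and finite fibre products, that every cover is finite, and that every representable is quasi-compact in the associated topos.

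First, I would exhibit the trivial localization (corresponding to $f_0 = 1$) as a terminal object and show that fibre products of derived rational localizations over $R$ exist and remain derived rational localizations. The key identification is
\[
R\lt \tfrac{f_1}{f_0},\ldots,\tfrac{f_n}{f_0} \gt^h \wotimes_R^\L R\lt \tfrac{g_1}{g_0},\ldots,\tfrac{g_m}{g_0} \gt^h \;\cong\; R\lt \tfrac{f_i g_j}{f_0 g_0} \gt^h
\]
indexed over pairs $(i,j)\neq (0,0)$. Via Proposition \ref{prop:rational_localizations_as_koszul} this reduces to an explicit manipulation of Koszul complexes, and via Proposition \ref{prop:bornological_localizations} together with the commutation of $\wotimes^\L$ with filtered colimits one reduces further to the affinoid case over $A$, where it is the content of Proposition \ref{prop:derived_intersection_rational_affinoid}. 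Closure under composition of two derived rational localizations is handled in the same manner, so the underlying category of both sites has all finite limits.

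Second, covers are finite by the definitions in Definition \ref{defn:homotopical_huber_spectrum}, and Proposition \ref{prop:coverings} ensures that the derived Tate-\u{C}ech complex of a standard rational cover is strictly acyclic, making the topology subcanonical. Every representable is quasi-compact since its covers are finite by fiat. This places both sites within the SGA 4 framework for coherent sites; equivalently, the associated $\infty$-topologies are finitary in the sense of Definition 3.17 of \cite{DAGVII}, consistent with the use of Theorem 4.1 of \lc in the proof of Theorem \ref{thm:deligne_homotopy}. The classical theorem identifying coherent topoi with topoi of sheaves on spectral topological spaces then yields the desired conclusion, the points of the spectral space being exactly the topos-theoretic points provided by Theorem \ref{thm:deligne_homotopy}.

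The main obstacle is the explicit description of the fibre product in the first step: although intersection of classical rational domains combines two fractional systems into a single one with denominator $f_0 g_0$, verifying that this set-theoretic picture lifts to the \emph{derived} level for a general (possibly non-sheafy) Banach algebra $R$ requires passing through $R_\born$ to exploit the strict exactness of filtered colimits in $\bBorn_A$ before base-changing along $R_\born \to R$. Without this detour the derived tensor product could a priori acquire higher cohomology incompatible with the Koszul presentation, and the site would fail to be closed under fibre products.
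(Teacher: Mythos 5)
Your proposal is correct and follows essentially the same route as the paper, which simply observes that the covers are finite families, deduces coherence, and invokes the classical Deligne theorem and the correspondence with spectral spaces. Your explicit verification that derived rational localizations are stable under fibre products (via the $f_i g_j / f_0 g_0$ formula, reduced through $R_\born$ to the affinoid case) is a useful unpacking of what the paper dismisses as ``easy to check,'' but it is not a different argument.
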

\begin{proof}
Since the covers of both sites are determined by finite families of morphisms it is easy to check that they are coherent $\infty$-site (\cf Theorem \ref{thm:deligne_homotopy}). It is then easy to check that $|\Spa_\Zar^h(R)|$ and $|\Spa_\Rat^h(R)|$ are coherent and we can apply the classical Deligne's Theorem to them to deduce that they have have enough points.
\end{proof}

Proposition \ref{prop:coverings} directly implies that the association
\[ \Spec  \l ( R \l \lt \frac{f_1}{f_0}, \cdots, \frac{f_n}{f_0} \r \gt^h \r ) \mapsto R \l \lt \frac{f_1}{f_0}, \cdots, \frac{f_n}{f_0} \r \gt^h \]
is a derived sheaf of simplicial Banach $A$-algebras both on $\Spa_\Rat^h(R)$ and on $\Spa_\Zar^h(R)$. We put this observation in a proposition.

\begin{prop} \label{prop:derived_scheme_Structure}
The $\infty$-sites $\Spa_\Rat^h(R)$ and $\Spa_\Zar^h(R)$ have a canonical $\infty$-structural sheaf given by derived rational localizations.
\end{prop}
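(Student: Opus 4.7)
The plan is to read off this statement as a direct consequence of the machinery developed earlier in the paper, specifically from the derived Tate acyclicity theorem (Theorem \ref{thm:derived_Tate}) combined with the fact that derived rational localizations are homotopy epimorphisms (Proposition \ref{prop:banach_homotopy_epis}) and that standard rational covers are covers for the homotopy Zariski topology (Proposition \ref{prop:coverings}). The key observation is that the proposed structural sheaf is nothing but the restriction to the respective sub-$\infty$-sites of the tautological $\infty$-presheaf on $\infty\minus\bAff_R$ that sends $\Spec(B)$ to $B$.

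First, I would check that this assignment is well-defined as an $\infty$-presheaf on both $\Spa_\Zar^h(R)$ and $\Spa_\Rat^h(R)$. For this, one uses that by Proposition \ref{prop:banach_homotopy_epis} every derived rational localization $R \to R\langle f_1/f_0, \ldots, f_n/f_0\rangle^h$ is a homotopy epimorphism, hence a morphism in the $\infty$-site $\Zar_R$. Therefore the functor $\Spec(B) \mapsto B$ on $\Zar_R$ restricts tautologically to a contravariant $\infty$-functor on each of the two sub-$\infty$-sites, valued in the $\infty$-category of simplicial Banach $R$-algebras (via the monoidal Dold--Kan correspondence applied to the Koszul dg-algebra representatives provided by Proposition \ref{prop:rational_localizations_as_koszul}).

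Next, I would verify the descent property. For $\Spa_\Zar^h(R)$ the covers are by definition the conservative finite families of homotopy Zariski localizations among derived rational localizations, so Theorem \ref{thm:derived_Tate} immediately gives that the associated derived Tate--\v{C}ech complex
\[
\Tot\!\left( 0 \to R \to \prod_i R_i \to \prod_{i,j} R_i \wotimes_R^\L R_j \to \cdots \right)
\]
is strictly acyclic, which is exactly the sheaf condition for the presheaf $B \mapsto B$. For $\Spa_\Rat^h(R)$ the covers are the standard rational covers; Proposition \ref{prop:coverings} shows that these are already covers for the homotopy Zariski topology, so the same Tate--\v{C}ech acyclicity applies and the descent condition holds.

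The step that requires a bit of care, but not genuine difficulty, is the compatibility between the dg-algebra presentation of Proposition \ref{prop:rational_localizations_as_koszul} and the $\infty$-categorical viewpoint: one must check that the Koszul resolutions glue into a functor on the homotopy category (equivalently, on the underlying $\infty$-site), rather than depending on choices of representatives. This is handled by the fact that all the objects involved are cofibrant models of the derived tensor product $R_\born\langle f_1/f_0,\dots,f_n/f_0\rangle^h \wotimes_{R_\born}^\L R$, which is well-defined up to quasi-isomorphism. I expect no obstacle beyond this bookkeeping, since the essential analytic content (flatness of Tate algebras, strict exactness of filtered colimits in $\bBorn_A$, strict Koszul regularity of affinoid rational localizations) has already been established in the preceding sections.
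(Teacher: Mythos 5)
Your proposal is correct and follows essentially the same route as the paper, which simply observes that the tautological assignment $\Spec\bigl(R\bigl\langle \frac{f_1}{f_0},\dots,\frac{f_n}{f_0}\bigr\rangle^h\bigr) \mapsto R\bigl\langle \frac{f_1}{f_0},\dots,\frac{f_n}{f_0}\bigr\rangle^h$ is a derived sheaf as a direct consequence of Proposition \ref{prop:coverings} together with the derived Tate acyclicity of Theorem \ref{thm:derived_Tate}. Your additional remarks on well-definedness up to quasi-isomorphism and on the Dold--Kan passage to simplicial Banach algebras are sound bookkeeping that the paper leaves implicit.
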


We will see in Example \ref{exa:buzz} that for some well-known examples of non-sheafy (in the usual sense) Banach rings the spaces are $\Spa(R)$, $\Spa_\Rat^h(R)$ and $\Spa_\Zar^h(R)$ canonically agree. Therefore, in such cases Proposition \ref{prop:derived_scheme_Structure} induces a derived structural sheaf of simplicial Banach algebras on $\Spa(R)$.

In \cite{Scholze}, Scholze and Clausen have proved results similar to the ones of this section in the context of condensed rings. In particular \cite[Proposition 13.16]{Scholze} proves that any Huber pair has a canonical structure of analytic ring (in the sense of \lc) and therefore a structure of analytic space for the topology defined by finite conservative coverings of steady localizations (the steady localizations as defined in \cite{Scholze} are the homotopy epimorphisms in the theory of condensed rings and the Grothendieck topology of analytic spaces defined in \lc is essentially the homotopy Zariski topology of analytic rings). Therefore, the structure of $\infty$-analytic space given to any Huber pair in \cite{Scholze} is the condensed analogue of the notion of derived analytic space for the homotopy Zariski topology defined in our previous work \cite{BaBeKr2}, that we recalled in Section \ref{sec:quasi_abelian}. It is not clear to us how our constructions and the ones of \cite{Scholze} compare but we have been informed via a private communication with Peter Scholze that probably also the derived structural sheaf he has defined can be computed via Koszul complexes.

We do not know how the homotopical Huber spectrum and the usual Huber spectrum of a Banach ring compare in general, but if $A = k$, a non-Archimedean field, we can give the following comparison.

\begin{prop} \label{prop:field_spectra_comparison}
Let $R$ be a $k$-Banach algebra, then there is a canonical map
\[ \Spa(R) \to |\Spa_\Zar^h(R)|. \]
\end{prop}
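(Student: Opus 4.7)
The plan is to construct the map as the underlying map of spectral spaces induced by a morphism of sites in the opposite direction on opens. Concretely, I would define a functor
\[
F \colon |\Spa_\Zar^h(R)| \longrightarrow \mathrm{Open}(\Spa(R))
\]
sending a basic open, i.e.\ a derived rational localization $R \to R\langle f_1/f_0,\ldots,f_n/f_0\rangle^h$ (with $(f_0,\ldots,f_n) = R$), to the classical rational subset $U(f_1/f_0,\ldots,f_n/f_0) \subset \Spa(R)$. Once $F$ is shown to preserve finite intersections and covers it is a morphism of sites, and since both $|\Spa_\Zar^h(R)|$ (by Proposition \ref{prop:homotopical_huber_spectrum_spectral}) and $\Spa(R)$ are spectral, this determines a continuous map $\Spa(R) \to |\Spa_\Zar^h(R)|$. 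On points, it sends a continuous valuation $v \in \Spa(R)$ to the prime filter of derived rational localizations whose associated rational subset contains $v$.

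The first step is to verify that $F$ is well-defined on isomorphism classes of objects and morphisms in the site. If $(f_0,\dots,f_n)$ and $(g_0,\dots,g_m)$ determine quasi-isomorphic derived rational localizations of $R$, I apply the classical-part functor $c\circ \LH^0$, which is strong monoidal by Proposition \ref{prop:lax_monoidal}, to the Koszul description of Proposition \ref{prop:rational_localizations_as_koszul}. This produces a canonical isomorphism of classical Banach $R$-algebras $R\langle f_\bullet/f_0\rangle \cong R\langle g_\bullet/g_0\rangle$, whence the associated rational subsets agree by the classical Huber theory. Morphisms in the site over $R$ between two derived rational localizations descend to the classical level in the same way, and the existence of a morphism of rational localization algebras is equivalent to a reverse inclusion of rational subsets.

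The main step is showing that $F$ preserves covers. A cover in $|\Spa_\Zar^h(R)|$ is a finite family $\{R \to R_i\}_{i\in I}$ whose derived Tate--\v{C}ech complex is strictly acyclic (Theorem \ref{thm:derived_Tate}); equivalently, the family is conservative on $\bMod_R$. For each $v\in \Spa(R)$, the hypothesis that $R$ is a $k$-Banach algebra lets me associate the continuous character $\chi_v \colon R \to \mathcal{H}(v)$ to the completed residue field, a non-zero object of $\bMod_R$. Conservativity then forces $\mathcal{H}(v) \ootimes_R^\L R_i \neq 0$ for some $i$; by the homotopy-epimorphism property of $R \to R_i$ (Proposition \ref{prop:banach_homotopy_epis}) this non-vanishing is equivalent to $\chi_v$ admitting a continuous factorisation through $R_i$, which is precisely the condition $v \in U_i$. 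Hence $\bigcup_{i\in I} F(R_i) = \Spa(R)$. Preservation of finite intersections reduces to the observation that, after applying $c\circ \LH^0$, the transversality of derived rational intersections (Proposition \ref{prop:bornological_localizations} base-changed to $R$ via Definition \ref{defn:rational_localizations}) matches the classical fact that intersections of rational subsets are rational.

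The main obstacle will be the final identification in the cover-preservation step: namely, that for a continuous character $\chi_v \colon R \to \mathcal{H}(v)$ the non-vanishing of $\mathcal{H}(v) \ootimes_R^\L R_i$ is precisely the condition $v \in U_i$. The easy direction (if $v \in U_i$ then $\chi_v$ extends and the tensor product computes $\mathcal{H}(v)$ itself) is formal. For the converse one must use the Koszul dg-algebra presentation of $R_i = R\langle f_1/f_0,\ldots,f_n/f_0\rangle^h$ to argue that when some inequality $|f_j|_v \leq |f_0|_v$ defining $U_i$ fails, the corresponding Koszul differential $f_0 X_j - f_j$ becomes invertible after base change to $\mathcal{H}(v)$, collapsing the complex to zero. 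The hypothesis that $k$ is a valued field enters essentially in guaranteeing the existence of such a well-behaved completed residue field $\mathcal{H}(v)$, which in turn makes the reduction to explicit valuation inequalities available.
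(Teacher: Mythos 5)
Your route is genuinely different from the paper's. The paper's proof is soft: it writes $R \cong \limind_{i}^{\le 1} R_i$ with $R_i$ affinoid over $k$, invokes Kedlaya's identification $\Spa(R) \cong \limpro_i \Spa(R_i)$, uses functoriality of $|\Spa_\Zar^h(\minus)|$ together with the affinoid comparison $|\Spa_\Zar^h(R_i)| \cong \Spa(R_i)$ (Proposition \ref{prop:affinoid_coverscoincide}), and concludes by the universal property of the limit. You instead build a morphism of sites directly, sending a derived rational localization to its classical rational subset and checking preservation of covers via residue fields. If completed, your argument would give more (an explicit description on points, and independence from the affinoid presentation), and the conservativity-versus-residue-field mechanism is sound for the points where it applies. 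But there is a genuine gap.

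The gap is in the sentence ``for each $v \in \Spa(R)$ \dots associate the continuous character $\chi_v: R \to \cH(v)$ to the completed residue field, a non-zero object of $\bMod_R$.'' The points of $\Spa(R)$ are continuous semi-valuations with values in arbitrary ordered abelian groups; for a point of rank $> 1$ there is no completed residue field living in $\bMod_R$ (equivalently in $\bBan_k$ or $\bBorn_k$), because its ``norm'' is not real-valued, and the conservativity in Definition \ref{defn:homotopy_zariski_topology} is a statement about these module categories only. So your argument as written proves that a homotopy Zariski cover $\{R \to R_i\}$ has $\bigcup_i U_i \supseteq \cM(R)$, i.e. it covers only the rank-one (Berkovich) locus of $\Spa(R)$, not $\Spa(R)$ itself. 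To close this you need the separate fact that a finite family of rational subsets covers $\Spa(R)$ as soon as it covers $\cM(R)$ (equivalently, that every point of $\Spa(R)$ lies in exactly the rational subsets containing its maximal generization); this is true and is implicitly used by the paper in Example \ref{exa:buzz} (``rational subsets of $|\Spa_\Rat^h(R)|$ are determined by their intersections with $\cM(R)$''), but it must be stated and used explicitly, since it is exactly the passage from the Berkovich to the adic spectrum. A second, smaller point to nail down is functoriality: you assert that a morphism $R_i \to R_j$ of derived rational localizations over $R$ forces $U_j \subseteq U_i$; this does follow (for rank-one points) because $\cH(v) \ootimes_R^\L R_j$ is a module over $\cH(v) \ootimes_R^\L R_i$, so the vanishing of the latter forces the vanishing of the former, but again only after the rank-one reduction above has been justified.
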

\begin{proof}
Let us write as before $R \cong \limind_{i \in I}^{\le 1} R_i$ where $R_i$ are $k$-affinoid algebras. By \cite[Remark 2.6.3]{Ked1} we have that
\[ \Spa(R) \cong \limpro_{i \in I} \Spa(R_i) \]
and by the functoriality of $|\Spa_\Zar^h(\minus)|$ (\cf discussion below) we have that for each $i$ there is a canonical map
\[ |\Spa_\Zar^h(R)| \to |\Spa_\Zar^h(R_i)| \]
induced by the canonical map $R_i \to R$. But by Proposition \ref{prop:affinoid_coverscoincide} we have that
\[ |\Spa_\Zar^h(R_i)| \cong \Spa(R_i) \]
hence the universal property of the projective limit gives a canonical morphism $\Spa(R) \to |\Spa_\Zar^h(R)|$.
\end{proof}

One way to obtain a map $\Spa(R) \to |\Spa_\Zar^h(R)|$ in general would be to generalize Proposition \ref{prop:affinoid_coverscoincide} to affinoid algebras over any strongly Noetherian Banach-Tate ring.

We conclude this section by briefly mentioning that any morphism $\varphi: R \to S$ of Banach $A$-algebras induces a map of ringed sites $\varphi^*: |\Spa_\Rat^h(S)| \to |\Spa_\Rat^h(R)|$ (where by ringed sites we mean sites with a structural derived sheaf of Banach algebras). Indeed, if $R \to R \l \lt \frac{f_0}{f_j}, \ldots, \frac{f_n}{f_j} \r \gt^h$ is a derived rational localization of $R$, then $S \to S \l \lt \frac{\varphi(f_0)}{\varphi(f_j)}, \ldots, \frac{\varphi(f_n)}{\varphi(f_j)} \r \gt^h$ is a derived rational localization of $S$ and the square
\[ 
\begin{tikzpicture}
\matrix(m)[matrix of math nodes,
row sep=2.6em, column sep=2.8em,
text height=1.5ex, text depth=0.25ex]
{ 
   R & S   \\
   R \l \lt \frac{f_0}{f_j}, \ldots, \frac{f_n}{f_j} \r \gt^h & S \l \lt \frac{\varphi(f_0)}{\varphi(f_j)}, \ldots, \frac{\varphi(f_n)}{\varphi(f_j)} \r \gt^h \\    };
\path[->,font=\scriptsize]
(m-1-1) edge node[auto] {} (m-1-2);
\path[->,font=\scriptsize]
(m-1-1) edge node[auto] {} (m-2-1);
\path[->,font=\scriptsize]
(m-1-2) edge node[auto] {} (m-2-2);
\path[->,font=\scriptsize]
(m-2-1) edge node[auto] {} (m-2-2);
\end{tikzpicture}
\]
is commutative in the category of simplicial algebras because of the explicit definitions of the Koszul complexes. On the other, since the global sections of $|\Spa_\Rat^h(R)|$ are precisely $R$ (up to quasi-isomorphism), we have that any morphism of ringed sites $\varphi^*: |\Spa_\Rat^h(S)| \to |\Spa_\Rat^h(R)|$ induces a morphism of algebras $R \to S$. So, if both $R$ and $S$ are concentrated in degree $0$ we have that
\[ \Hom(R, S) \cong \Hom(|\Spa_\Rat^h(S)|, |\Spa_\Rat^h(R)|). \]
Similarly one shows that 
\[ \Hom(R, S) \cong \Hom(|\Spa_\Zar^h(S)|, |\Spa_\Zar^h(R)|). \]

\subsection{Localizations of bornological rings}

In this section we briefly discuss how the results of previous sections can be generalized to some bornological rings that are not necessarily Banach.
We keep the notation of the last section where $A$ is a fixed base Tate ring supposed to be strongly Noetherian with a uniform unit. All Banach rings (resp. bornological rings) are supposed to be $A$-Banach algebras (resp. $A$-bornological algebras).

\begin{defn} \label{defn:bornological_m-ring}
Let $R$ be an object of $\bComm(\bBorn_A)$, then $R$ is called \emph{multiplicatively convex} if $R \cong \underset{i \in I}\limind R_i$ for $R_i$ some $A$-Banach algebras, where the indexing set $I$ is filtered.
\end{defn}

\begin{rmk} \label{rmk:R_born}
We notice that the algebra $R_\born$ introduced in Definition \ref{defn:R_born} is a special example of a multiplicatively convex bornological $A$-algebra of a very special kind because 
\[ R_\born = \limind_{i \in I} R_i \]
where $R_i$ are affinoid $A$-algebras.
\end{rmk}

The following theorem directly follows from the properties of the functor $\limind$.

\begin{thm} \label{thm:inductive_derived_sheaf}
Let $R \cong \underset{i \in I}\limind R_i$ be a multiplicatively convex bornological algebra and $f_0, f_1, \ldots, f_n \in R$ be such that $(f_0, \ldots, f_n) = R$. Then, the canonical map $R \to R \l \lt \frac{f_1}{f_0}, \ldots, \frac{f_n}{f_0} \r \gt^h = K_{R \lt X_1, \ldots, X_n \gt} (f_0 X_1 - f_1, \ldots, f_0 X_n - f_n)$ is a homotopy epimorphism and the standard rational covers are covers for the homotopy Zariski topology.
\end{thm}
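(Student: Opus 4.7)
The plan is to reduce to the Banach-algebra case established in Propositions \ref{prop:banach_homotopy_epis} and \ref{prop:coverings}, and then pass to the filtered colimit. Writing $R \cong \limind_{i \in I} R_i$ with each $R_i$ a Banach $A$-algebra and $I$ filtered, I first pick $g_0, \ldots, g_n \in R$ realizing $\sum_{k=0}^n f_k g_k = 1$. Since $I$ is filtered and filtered colimits of sets detect equality, there is a cofinal $J \subset I$ such that $f_0, \ldots, f_n, g_0, \ldots, g_n$ all lie in $R_j$ for every $j \in J$ and the equality $\sum_{k=0}^n f_k g_k = 1$ already holds in $R_j$; in particular $(f_0, \ldots, f_n) = R_j$ as an ideal for each such $j$.

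For each $j \in J$, Proposition \ref{prop:banach_homotopy_epis} gives that $R_j \to R_j \l\lt \frac{f_1}{f_0}, \ldots, \frac{f_n}{f_0} \r\gt^h$ is a homotopy epimorphism. Flatness of $A \lt X_1, \ldots, X_n \gt$ over $A$ (Proposition \ref{prop:tate_flat}), combined with the fact that $\wotimes$ is a left adjoint and therefore commutes with filtered colimits in $\bBorn_A$, yields $R \lt X_1, \ldots, X_n \gt \cong \limind_{j \in J} R_j \lt X_1, \ldots, X_n \gt$; applying the Koszul construction termwise gives
\[
R \l\lt \frac{f_1}{f_0}, \ldots, \frac{f_n}{f_0} \r\gt^h \;\cong\; \limind_{j \in J} R_j \l\lt \frac{f_1}{f_0}, \ldots, \frac{f_n}{f_0} \r\gt^h.
\]
Proposition \ref{prop:filtered_colimits_homotopy_epis} then promotes this to the assertion that the colimit morphism $R \to R \l\lt \frac{f_1}{f_0}, \ldots, \frac{f_n}{f_0} \r\gt^h$ is itself a homotopy epimorphism.

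For the covering statement, I apply Proposition \ref{prop:coverings} to each $R_j$ with $j \in J$, so that the derived Tate-\v{C}ech complex of the standard rational cover of $R_j$ is strictly acyclic. The derived Tate-\v{C}ech complex of the standard rational cover of $R$ is the filtered colimit over $J$ of the corresponding complexes for the $R_j$, because all its entries are built from (derived) completed tensor products, and these commute with filtered colimits in $\bBorn_A$. Since filtered colimits are strictly exact in $\bBorn_A$, strict acyclicity is preserved under $\limind_J$, establishing the covering claim. The only real work in the argument is the first step: producing a cofinal $J$ on which $\sum_k f_k g_k = 1$ holds already in $R_j$ rather than merely in the colimit, which crucially uses the filteredness hypothesis; the rest is a formal consequence of the corresponding Banach-ring statements together with the good behaviour of filtered colimits in $\bBorn_A$.
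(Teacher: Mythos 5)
Your proposal is correct and follows essentially the same route as the paper's proof: detect the relation $\sum_k f_k g_k = 1$ at a cofinal stage $J$, identify $R \l\lt \frac{f_1}{f_0}, \ldots, \frac{f_n}{f_0} \r\gt^h$ with the filtered colimit of the corresponding localizations of the $R_j$ (using commutation of $\wotimes$ with filtered colimits), and then invoke Proposition \ref{prop:filtered_colimits_homotopy_epis} for the homotopy epimorphism and the strict exactness of filtered colimits in $\bBorn_A$ for the acyclicity of the Tate--\u{C}ech complex. The paper phrases the last step as ``$\limind$ commutes with finite products,'' but the content is the same as your colimit-of-complexes argument.
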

\begin{proof}
Suppose that $(f_0, \ldots, f_n) = R$, then there exist $g_1, \ldots, g_n \in R$ such that
\[ f_0 g_0 + \cdots + f_n g_n = 1, \]
therefore there exists a $j \in I$ such that $f_i, g_i \in R_j$ for all $i$. Thus, $(f_0, \ldots, f_n) = R_j$ in $R_j$ and hence, thanks to Proposition \ref{prop:coverings}, they define the standard rational cover 
\[ \l \{ R_j \to R_j \l \lt \frac{f_0}{f_i}, \cdots, \frac{f_n}{f_i} \r \gt^h \r \}_{0 \le i \le n} \]
for $R_j$ and for all $j' \in J$ such that $j' \ge j$. Notice that
\[ R \l \lt \frac{f_1}{f_0}, \ldots, \frac{f_n}{f_0} \r \gt^h = \limind_{j \in I} R_j \l \lt \frac{f_0}{f_i}, \cdots, \frac{f_n}{f_i} \r \gt^h  \]
because $\limind$ commutes with tensor products, where we defined $R \lt X_1, \ldots, X_n \gt = R \wotimes_A A \lt X_1, \ldots, X_n \gt$ as the Tate algebras over $R$. Therefore, $R \to R \l \lt \frac{f_1}{f_0}, \ldots, \frac{f_n}{f_0} \r \gt^h$ is a homotopy epimorphism thanks to Proposition \ref{prop:filtered_colimits_homotopy_epis} and the family 
\[ \l \{ R \to R \l \lt \frac{f_0}{f_i}, \cdots, \frac{f_n}{f_i} \r \gt^h \r \}_{0 \le i \le n} \]
is a cover for the homotopy Zariski topology because $\limind$ commutes with finite products.
\end{proof}

Similarly to the case of Proposition \ref{prop:coverings}, Theorem \ref{thm:inductive_derived_sheaf} has the consequence that from the family of derived rational localizations of $R$ we can define two $\infty$-site (with enough points) as before: $\Spa_\Zar^h(R)$ by considering covers for the homotopy Zariski topology and $\Spa_\Rat^h(R)$ by considering standard rational covers.

Another case of interest is when the $A$-bornological ring $R$ can be written as
\[ R \cong \limpro_{i \in I} R_i \cong \R \limpro_{i \in I} R_i. \]

\begin{thm} \label{thm:projective_derived_sheaf}
Let $R \cong \underset{i \in I}\limpro R_i$ and $f_0, f_1, \ldots, f_n \in R$ be such that $(f_0, \ldots, f_n) = R$. Suppose that for all $n$ one has that 
\begin{equation} \label{eq:R_tate_algebras}
R \lt X_1, \ldots, X_n \gt \cong \R \limpro_{i \in I} (R_i \lt X_1, \ldots, X_n \gt). 
\end{equation}
Then, the canonical map $R \to R \l \lt \frac{f_1}{f_0}, \ldots, \frac{f_n}{f_0} \r \gt^h$ is a homotopy epimorphism and the standard rational covers are covers for the homotopy Zariski topology.
\end{thm}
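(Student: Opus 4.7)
The plan is to mirror the proof of Theorem \ref{thm:inductive_derived_sheaf}, but pushing everything through a derived inverse limit rather than a filtered colimit, with the hypothesis \eqref{eq:R_tate_algebras} playing the role that strict exactness of $\limind$ in $\bBorn_A$ played before. First, since $(f_0, \ldots, f_n) = R$, choose Bézout coefficients $g_0, \ldots, g_n \in R$ with $\sum_j f_j g_j = 1$. Mapping to each $R_i$ shows that the images $f_j^{(i)}$ still generate the unit ideal, so Proposition \ref{prop:coverings} applies at every level: the morphisms $R_i \to B_j^{(i)} := R_i \l\lt \tfrac{f_0^{(i)}}{f_j^{(i)}}, \ldots, \tfrac{f_n^{(i)}}{f_j^{(i)}} \r\gt^h$ are homotopy epimorphisms, and $\{ R_i \to B_j^{(i)} \}_{0 \le j \le n}$ is a homotopy Zariski cover of $R_i$ whose derived Tate-\u{C}ech complex is strictly acyclic.

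The key lemma to extract is that the Koszul complexes defining the relevant derived rational localizations commute with $\R\limpro_i$. Indeed, $K_{R\lt X_1, \ldots, X_n\gt}(h_1, \ldots, h_n)$ is, term by term, a finite direct sum of copies of $R\lt X_1, \ldots, X_n\gt$ with differentials given by multiplications by fixed elements; since $\R\limpro_i$ commutes with finite direct sums and with such differentials, hypothesis \eqref{eq:R_tate_algebras} upgrades to
\[ B_j := R\l\lt \tfrac{f_0}{f_j}, \ldots, \tfrac{f_n}{f_j} \r\gt^h \cong \R\limpro_{i \in I} B_j^{(i)}. \]
Using flatness of the Tate algebras (Proposition \ref{prop:tate_flat}) to rewrite iterated derived tensor products as a single Koszul complex on a Tate algebra in more variables, the same reasoning gives
\[ B_{j_1} \wotimes_R^\L \cdots \wotimes_R^\L B_{j_k} \;\cong\; \R\limpro_{i \in I} \bigl( B_{j_1}^{(i)} \wotimes_{R_i}^\L \cdots \wotimes_{R_i}^\L B_{j_k}^{(i)} \bigr). \]

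The homotopy epimorphism claim follows by applying this with $k = 2$ and $j_1 = j_2 = j$: each affinoid-level map $B_j^{(i)} \wotimes_{R_i}^\L B_j^{(i)} \to B_j^{(i)}$ is an equivalence, so applying $\R\limpro_i$ yields $B_j \wotimes_R^\L B_j \cong B_j$. For the covering claim, the identification above together with the fact that $\R\limpro$ commutes with finite products lets us write the full derived Tate-\u{C}ech complex of $\{R \to B_j\}_{0 \le j \le n}$ as $\R\limpro_i$ of the derived Tate-\u{C}ech complex of $\{R_i \to B_j^{(i)}\}_{0 \le j \le n}$. Since every complex in the inverse system is strictly acyclic by Proposition \ref{prop:coverings}, and $\R\limpro$ preserves strict acyclicity (it is a triangulated functor), the limit complex is strictly acyclic, which by Theorem \ref{thm:derived_Tate} is exactly the homotopy Zariski covering condition.

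The main obstacle in this plan is justifying the interchange of the derived tensor product with $\R\limpro_i$ in the second displayed formula above. The hypothesis \eqref{eq:R_tate_algebras} is precisely what makes this work: it reduces the interchange to the commutation of $\R\limpro_i$ with the finite combinatorial data (direct sums, finite tensor products, multiplication maps) assembling the Koszul complex from copies of the Tate algebra, all of which are automatic for a right derived limit functor.
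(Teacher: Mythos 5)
Your proposal is correct and follows essentially the same route as the paper's (much terser) proof: project the $f_j$ to each $R_i$ to get level-wise derived rational localizations, use hypothesis \eqref{eq:R_tate_algebras} to identify the Koszul complexes (and their derived tensor powers, via Tate algebras in more variables) with $\R\limpro$ of the corresponding affinoid-level data, and then transport the homotopy epimorphism and Tate--\u{C}ech acyclicity statements through $\R\limpro$. The paper leaves these interchange steps to the reader ("this again follows easily from equation \eqref{eq:R_tate_algebras}"), so your write-up is a faithful elaboration rather than a different argument.
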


\begin{rmk}
We notice that the condition of equation \eqref{eq:R_tate_algebras} is often satisfies and easy to check in many situations of interest in applications, \ie for example when $R$ is a multiplicatively convex Fr\'echet algebra.
\end{rmk}

\begin{proof}
The proof is similar to the proof of Theorem \ref{thm:inductive_derived_sheaf} once it is noticed that for any $i \in I$ the projection $\pi_i: R \to R_i$ gives the elements $\pi_i(f_0), \pi_i(f_1), \ldots, \pi_i(f_n) \in R_i$ that determine a derived rational localization of $R_i$. Then, the condition of equation \eqref{eq:R_tate_algebras} implies that 
\[ R \l \lt \frac{f_1}{f_0}, \ldots, \frac{f_n}{f_0} \r \gt^h \cong \R \limpro_{i \in I} \l( R_i \l \lt \frac{\pi_i(f_1)}{\pi_i(f_0)}, \ldots, \frac{\pi_i(f_n)}{\pi_i(f_0)} \r \gt^h \r ). \]
The only non-trivial thing to check is that $R \to R \l \lt \frac{f_1}{f_0}, \ldots, \frac{f_n}{f_0} \r \gt^h$ is a homotopy epimorphism but this again follows easily form equation \eqref{eq:R_tate_algebras}.
\end{proof}

Again Theorem \ref{thm:projective_derived_sheaf} comes with its corollaries about the existence of the $\infty$-topoi $\Spa_\Zar^h(R)$ and $\Spa_\Rat^h(R)$.

\section{Examples and applications} \label{sec:examples}

Before discussing concrete examples we would like to recall the known results about sheafyness of the structural pre-sheaf of $\Spa(R)$ for a Banach ring $R$. If $R$ is a uniform (always non-Archimedean) Banach ring one can prove (\cf \cite{Ked1} Corollary 2.8.9) that the Tate-\u{C}ech complex associated to the Laurent cover $\Spa(R \lt f \gt) \coprod\Spa(R \lt f^{-1} \gt) \to \Spa(R)$
\begin{equation} \label{eq:Tate_Laurent}
0 \to R \to R \lt f \gt \times R \lt f^{-1} \gt \to R \lt f, f^{-1} \gt \to 0 
\end{equation} 
is strictly exact for any $f \in R$ by proving that the ideals $(X - f)$ and $(f X - 1)$ are closed in $R \lt X \gt$. But this is not enough to prove that the Tate complex is strictly exact for any admissible cover, unlike the classical case of rigid geometry, because it is not true that all admissible covers of $\Spa(R)$ can be refined by an intersection of covers of the form $R \to R \lt f \gt \times R \lt f^{-1} \gt$. The best one can achieve is to find a rational cover of $\Spa(R)$ whose elements have the property that every rational cover is refined by a standard Laurent cover. In this way, if we start with a uniform Banach ring $R$ the problem of checking that the Tate complex of an admissible cover is exact is translated in checking that the Tate complex of standard Laurent cover of a rational subdomain of $R$ is exact. The main complication at this point is the fact that rational localizations of a uniform ring may not be uniform and, even more, the Tate complex of a standard Laurent cover of a rational subdomain may not be strictly exact, when computed in the category of Banach modules (\cf \cite{Buzz}, \cite{Mih}).

A way to avoid the mentioned problem is to suppose that all rational localizations of a given uniform ring are uniform rings, leading to the following definition.

\begin{defn} \label{defn:stably_uniform}
A uniform Banach ring $R$ is called \emph{stably uniform} if all its rational localizations are uniform rings.
\end{defn}

In this setting one can prove that the usual computations can be carried over, hence obtaining the following result.

\begin{thm}[Buzzard-Mihara-Verberkmoes] \label{thm:Buzzard_Mihara}
Let $R$ be a stably uniform Tate ring. Then, for any rational cover the Tate-\u{C}ech complex is strictly acyclic.
\end{thm}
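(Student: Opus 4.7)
The plan is to follow the classical Tate-style argument adapted to the Banach setting by the authors named in the theorem: reduce an arbitrary rational cover of $\Spa(R)$ to a \emph{simple Laurent cover} and then handle Laurent covers by induction on the number of generators. For the first reduction I would show, by elementary manipulations at the level of $\Spa(R)$ (rewriting the inequalities $v(f_i) \le v(f_0) \ne 0$ that define a rational subset as finitely many inequalities of the form $v(g) \le 1$ or $v(g^{-1}) \le 1$), that any rational cover is refined by a cover of the shape $\{ \Spa(R \lt f_1^{\epsilon_1}, \ldots, f_n^{\epsilon_n} \gt) \}_{\epsilon \in \{\pm 1\}^n}$. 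Since stable uniformity is preserved under rational localization, every ring encountered during this refinement remains stably uniform, and strict acyclicity of the Tate-\v{C}ech complex of a refinement implies strict acyclicity for the original cover.

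Next I would proceed by induction on $n$. The base case $n = 1$ is the strict exactness of the sequence displayed in \eqref{eq:Tate_Laurent}: for a uniform Tate ring the ideals $(X - f)$ and $(fX - 1)$ in $R\lt X\gt$ are closed, the induced quotient norms coincide with the spectral norms by uniformity, and the kernel--cokernel computation for the Tate differentials goes through exactly as in \cite[Corollary 2.8.9]{Ked1}. For the inductive step I would view the Tate-\v{C}ech complex of the simple Laurent cover defined by $f_1, \ldots, f_n$ as the total complex of the $n$-fold tensor product over $R$ of the three-term base-case complexes attached to the individual $f_i$. Applying the inductive hypothesis inside $R \lt f_n \gt$ and $R \lt f_n^{-1} \gt$ (both stably uniform by hypothesis) with respect to $f_1, \ldots, f_{n-1}$, and then splicing with the base-case sequence for $f_n$, one deduces strict acyclicity of the total complex; here one crucially uses that Laurent localizations of uniform Tate rings are flat in the strict sense, so that strict exactness propagates through the iterated tensor product.

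The hardest step will be passing from algebraic exactness to \emph{strict} exactness, that is, controlling the quotient norms on all the Banach algebras that appear throughout the argument. Stable uniformity is precisely the hypothesis that removes this obstacle: it ensures that at every stage of the induction the rings involved remain uniform, so the Banach norm stays equivalent to the spectral norm and the quotient norms produced by the Koszul-type presentations of rational localizations remain under control. Without stable uniformity, uniformity could be destroyed by a single rational localization and the norm control needed in order to tensor exact sequences together would fail, yielding at best algebraic, and not strict, exactness of the Tate-\v{C}ech complex.
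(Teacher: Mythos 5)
The paper does not actually prove this statement: its ``proof'' is a pointer to \cite[Theorem 7]{Buzz} and \cite[Theorem 4.9]{Mih}, so your sketch has to be judged against the argument of those references. Your overall architecture (reduce to Laurent covers, induct on the number of generators, check the simple Laurent case directly) is the right one, and your base case is essentially correct, but your reduction step is wrong as stated. An arbitrary rational cover of $\Spa(R)$ is \emph{not} refined by a simple Laurent cover $\{\Spa(R \lt f_1^{\epsilon_1}, \ldots, f_n^{\epsilon_n} \gt)\}_{\epsilon}$ --- the paragraph of the paper immediately preceding the theorem warns about exactly this point. Moreover, even when a refinement $\mathfrak{V}$ of a cover $\mathfrak{U}$ exists, strict acyclicity for $\mathfrak{V}$ does not by itself yield strict acyclicity for $\mathfrak{U}$: the \u{C}ech double-complex argument also requires strict acyclicity of $\mathfrak{V}$ restricted to every finite intersection of members of $\mathfrak{U}$. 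The actual reduction (Tate's; \cf \cite[8.2.2]{BGR} and \cite[\S 2.4]{Ked1}) is a two-step locality argument: only a rational cover \emph{generated by units} is refined by a Laurent cover, so one first produces a Laurent cover of $\Spa(R)$ on each member of which the given rational cover becomes generated by units, and then applies the transitivity lemma twice; stable uniformity is what guarantees that every intermediate ring, being a rational localization of $R$, is still uniform.

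The inductive step also has a genuine gap. You realize the Tate-\u{C}ech complex of the $n$-generator Laurent cover as the total complex of the $n$-fold completed tensor product over $R$ of the three-term complexes \eqref{eq:Tate_Laurent}, and you propagate strict exactness through this tensor product using the claim that Laurent localizations of uniform Tate rings are ``flat in the strict sense.'' That claim is unjustified and should not be expected to hold: $R \lt f \gt$ is a quotient of $R \lt X \gt$ by the closure of $(X - f)$, and such quotients are not flat in the sense of Definition \ref{defn:flat} --- the flatness of $R \lt X \gt$ over $R$ (Proposition \ref{prop:tate_flat}) does not descend to quotients, and indeed the entire point of Section \ref{sec:rational} is that rational localizations are homotopy epimorphisms rather than flat morphisms, which is why Koszul resolutions are needed there. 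Without flatness of the terms of one factor, the total complex of a tensor product of strictly exact complexes of Banach modules need not be strictly exact. The classical induction avoids tensor products altogether: the Laurent cover generated by $f_1, \ldots, f_n$ is the composition of the simple Laurent cover for $f_n$ with the $(n-1)$-generator Laurent covers of $R \lt f_n \gt$ and $R \lt f_n^{-1} \gt$, and one concludes by the transitivity lemma together with the inductive hypothesis applied to these two localizations, which are again stably uniform because they are rational localizations of $R$.
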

\begin{proof}
\cf \cite[Theorem 7]{Buzz}, \cite[Theorem 4.9]{Mih}.
\end{proof}

Theorem \ref{thm:Buzzard_Mihara} has two main drawbacks: one, the condition of being stably uniform, although very general and satisfied by many objects of interest, is not easy to check and the second drawback is that in some applications one may be interested to work with more general Banach rings than the stably uniform ones. We hope that our results will make possible to overcome these restrictions that the theory of Banach algebras had up to now. 

From Theorem \ref{thm:Buzzard_Mihara} we can deduce the following result.

\begin{thm} \label{thm:stably_adic_equal_homotopical}
Let $R$ be a stably uniform Tate ring. Then
\[ \Spa(R) \cong |\Spa_\Rat^h(R)|,  \]
as (Banach) ringed sites.
\end{thm}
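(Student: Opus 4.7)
The plan is to show, under the stably uniform hypothesis, that every derived rational localization $R\langle f_1/f_0,\ldots,f_n/f_0\rangle^h$ is strictly Koszul regular and isomorphic in the homotopy category to the classical rational localization $R\langle f_1/f_0,\ldots,f_n/f_0\rangle$. Once this is established, the bijection between rational subdomains of $\Spa(R)$ and derived rational localizations of $R$ (both parameterized by tuples $(f_0,\ldots,f_n)$ generating the unit ideal) matches the two sites objectwise, and matches the standard rational covers of $\Spa_\Rat^h(R)$ with the rational covers of $\Spa(R)$. Combining this with Theorem \ref{thm:Buzzard_Mihara}, which asserts strict acyclicity of the classical Tate-\v{C}ech complex, the structural derived sheaf of $\Spa_\Rat^h(R)$ is concentrated in degree $0$ and coincides with the classical structural sheaf of $\Spa(R)$.

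The crucial step is the Koszul regularity of $(f_0 X_1 - f_1,\ldots, f_0 X_n - f_n)$ in $R\langle X_1,\ldots,X_n\rangle$ for stably uniform Tate rings. I would mimic the proof of Proposition \ref{prop:laurent_localizations_koszul_regular} and Lemma \ref{lemma:rational_as_compositions}, replacing the Noetherian input (Krull's intersection) by the uniform norm. Using the uniform unit of $A$ to rescale $f_0$ into the Laurent region and reduce to Weierstrass-Laurent compositions, we are left with two base cases. The Laurent case $K_{R\langle X\rangle}(fX-1)$ is immediate: $(fX-1)\sum a_n X^n = 0$ yields $a_0=0$ and $a_n = f a_{n-1}$, so inductively $a_n = 0$ for all $n$. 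For the Weierstrass case $K_{R\langle X\rangle}(X-f)$, the relation $(X-f)\sum a_n X^n=0$ gives $a_{n-1} = f a_n$, hence $a_n = f^{k-n} a_k$ for all $k\ge n$; because $R$ is uniform, the spectral norm coincides with the given norm, and the non-triviality of the Weierstrass subdomain forces $|f|_{\sup}\le 1$, so $|a_n|\le |f|^{k-n}|a_k|\to 0$, giving $a_n=0$. Stability under the uniform hypothesis ensures that the intermediate quotients remain stably uniform, so the induction carries through arbitrary rational localizations, and the quotient $\LH^0$ coincides with the classical $R\langle f_1/f_0,\ldots,f_n/f_0\rangle$ since for stably uniform $R$ the relevant ideals are closed.

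With Koszul regularity in hand, Proposition \ref{prop:derived_intersection_rational_affinoid} generalizes formally to show derived intersections of rational localizations agree with classical intersections, so Theorem \ref{thm:Buzzard_Mihara} identifies the derived Tate-\v{C}ech complexes of standard rational covers with the classical Tate-\v{C}ech complexes. The identity functor on rational localizations therefore lifts to an equivalence of Banach ringed sites $\Spa(R)\cong |\Spa_\Rat^h(R)|$. The main obstacle is precisely the generalization of the Koszul regularity computation (Proposition \ref{prop:laurent_localizations_koszul_regular}) from the strongly Noetherian to the stably uniform setting, since Krull's intersection theorem is unavailable; the replacement via the uniform/spectral norm works cleanly, but requires separate care in the reduction Lemma \ref{lemma:rational_as_compositions}, whose original proof explicitly used that $R$ was affinoid over a strongly Noetherian Tate ring. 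Once these two technical points are adapted, the rest of the argument is formal.
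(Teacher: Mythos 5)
Your overall strategy is the same as the paper's: reduce to Weierstrass and Laurent localizations via Lemma \ref{lemma:rational_as_compositions}, show that the relevant Koszul complexes are strictly regular so that derived and classical rational localizations coincide, and then let Theorem \ref{thm:Buzzard_Mihara} identify the derived Tate--\v{C}ech complex with the classical one. However, your treatment of the crucial Koszul-regularity step has a genuine flaw. In the Weierstrass case you claim that ``the non-triviality of the Weierstrass subdomain forces $|f|_{\sup}\le 1$''; this is false (take $f=\pi^{-1}T$ in $k\lt T\gt$ with $|\pi^{-1}|>1$: the locus $v(f)\le 1$ is a nonempty proper subdomain while $|f|_{\sup}>1$), so your norm estimate $|a_n|\le|f|^{k-n}|a_k|\to 0$ does not apply in general. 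The injectivity of $\mu_{(X-f)}$ for uniform $R$ can instead be obtained from reducedness: the coefficient equations give $fa_0=0$ and $a_0=f^na_n$ for all $n$, whence $a_0^2=(fa_0)f^{n-1}a_n=0$, so $a_0=0$ since uniform rings have no nonzero nilpotents, and one proceeds inductively. This is the correct replacement for the Krull-intersection argument of Proposition \ref{prop:laurent_localizations_koszul_regular}.

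Separately, injectivity only gives regularity; for \emph{strict} regularity (and hence for $\LH^0$ to be the classical Banach localization, which is what the ringed-site isomorphism requires) you also need the ideals $(X-f)$ and $(fX-1)$ to be closed in $R'\lt X\gt$ for every rational localization $R'$ of $R$. You assert this (``since for stably uniform $R$ the relevant ideals are closed'') but give no argument, and this is precisely the nontrivial analytic input: the paper imports it from Kedlaya--Liu (Corollary 2.8.9 of \cite{Ked1}), applied to each rational localization $R'$, which is where the \emph{stable} uniformity (rather than mere uniformity of $R$) is used. With that citation supplied and the Weierstrass injectivity argument repaired as above, the rest of your plan (matching the covers, invoking Theorem \ref{thm:Buzzard_Mihara}, and concluding the equivalence of ringed sites) agrees with the paper's proof.
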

\begin{proof}
It is enough to work locally. As every rational localization of $R$ is uniform the computations for the strict exactness of the Tate-\u{C}ech complex reduce to check that the complex of equation \eqref{eq:Tate_Laurent} is strictly exact. Now, as mentioned, by Corollary 2.8.9 of \cite{Ked1} we know that the ideals $(X - f)$ and $(X f - 1)$ are closed in $R' \lt X \gt$ for any rational localization $R \to R'$, for any $f \in R'$. This implies that Laurent localizations of $R'$ are homotopy epimorphisms as they are quasi-isomorphic to their Koszul complexes. Similarly one can check that Weierstrass and Laurent localizations of $R$ are homotopy epimorphisms (because again the ideals $(X - f)$ and $(X f - 1)$ are closed in $R \lt X \gt$) and hence all rational localizations by applying Lemma \ref{lemma:rational_as_compositions}. Therefore, the Tate-\u{C}ech complex of a rational cover of $R$ coincides with the derived Tate-\u{C}ech complex. This proves the theorem.
\end{proof}

We will see in the next example that the converse of Theorem \ref{thm:stably_adic_equal_homotopical} is not true by showing that there exists a non-sheafy (in the usual sense) Banach algebra for which $\Spa(R) \cong |\Spa_\Rat^h(R)|$. In this case, the derived structural sheaf on $|\Spa_\Rat^h(R)|$ canonically pulls back to a derived sheaf on $R$.

\begin{exa} \label{exa:buzz}
We borrow an example from \cite{Buzz} of a non-sheafy (in the usual sense) Tate ring. This example was actually found by Rost and (probably) presented in a paper for the first in Huber's paper \cite{Hub}. In \cite{Buzz} the example is presented in the context of adic rings, by using valuations, and we translate it here in the context of Banach rings, \ie by using norms. 

Consider the ring of Laurent polynomials 
\[ A = \frac{k[T, T^{-1}, Z]}{(Z^2)} \]
over the non-Archimedean non-trivially valued field $k$ and endow it with the norm
\begin{equation} \label{eq:buzzard_norm}
| \sum_{n \in \Z, m = \{0, 1\}} a_{n,m} T^n Z^m | = \max\{ \max_{n \in \Z}\{ \rho^{-|n|} |a_{n, 0}| \}, \max_{n \in \Z} \{ \rho^{|n|} |a_{n, 1}| \} \}
\end{equation}
where $0 < \rho < 1$. This function defines a non-Archimedean norm on $A$ that is the norm associated with the valuation discussed in \cite{Buzz}. Let us denote with $R$ the completion of $A$ with respect to the mentioned norm. This is a Banach ring over $k$ and a Tate ring because $k$ is non-trivially valued. The space $\Spa(R)$ is thus well defined and one can consider its standard Laurent cover $U = \Spa(R \lt T \gt)$ and  $V = \Spa(R \lt T^{-1} \gt)$, where $R \lt T \gt$ denotes the quotient of $R \lt X \gt$ by the closure of the ideal $(X - T)$ and similarly $R \lt T^{-1} \gt$. One can show that 
\[ R \lt T \gt \cong \{ \sum_{n \in \Z} a_n T^n \in k [\![ T , T^{-1} ]\!] | \lim_{n \to \infty} |a_n| = 0, \lim_{n \to -\infty} \rho^n |a_n| = 0 \} \]
and 
\[ R \lt T^{-1} \gt \cong \{ \sum_{n \in \Z} a_n T^n \in k [\![ T , T^{-1} ]\!] | \lim_{n \to \infty} \rho^{-n}|a_n| = 0, \lim_{n \to -\infty} |a_n| = 0 \}. \]
One way to see this is to notice that the residue norm on
\[ A \lt T \gt = \frac{A \lt X \gt }{(X - T)} \]
must satisfy for all $n$ the inequality
\[ |Z| = |T^n T^{-n} Z| \le |T^n| | T^{-n} Z| = \rho^n \]
in order to be a well defined ring norm. Therefore (as $\rho < 1$), $|Z| = 0$ and hence $Z$ disappears in the separated completion of $A \lt T \gt$, that is $R \lt T \gt$. A similar reasoning proves that $Z$ is in the kernel of the map $R \to R \lt T^{-1} \gt$.

Therefore, the Tate complex
\begin{equation} \label{eq:Tate_not_exact}
0 \to R \to R \lt T \gt \times R \lt T^{-1} \gt \to R \lt T, T^{-1} \gt \to 0 
\end{equation} 
is not exact because the first map is not injective as $Z$ is mapped to $0$. Moreover, it is clear that second map is surjective and its kernel is the algebra 
\[  \l \{ \sum_{n \in \Z} a_n T^n \in k [\![ T , T^{-1} ]\!] | \lim_{n \to \infty} \rho^{-n}|a_n| = 0, \lim_{n \to -\infty} \rho^n |a_n| = 0 \r \} \]
that is a sub-algebra of $R$. This is one of the main well-known examples where the theory of adic spaces breaks down.

One can get a better insight in the geometry of $R$, or more precisely $\Spa(R)$, by writing a presentation of $R$ by affinoid $k$-algebras. It is easy to check that the following presentation holds
\[ R \cong {\limind_{n \in \N}}^{\le 1} \frac{k \lt r^{-1} T_1, r^{-1} T_2, Z, r^{-1} Y_1, r^{-1} Y_1', r^{-2} Y_2, r^{-2} Y_2', \ldots \gt}
{(T_1 T_2 - 1, Z^2, Z T_1 - Y_1, Z T_2 - Y_1', Z T_1^2 - Y_1, Z T_2^2 - Y_1', \ldots)} = {\limind_{n \in \N}}^{\le 1} R_n. \]
This presentation of $R$ as a quotient of a Tate algebra in infinitely many variables has some remarkable properties:
\begin{itemize}
\item[-] it is cofinal in the canonical presentation defined in equation \eqref{eq:canonical_presentation}, in the sense that it is a subsystem that computes the same colimit;
\item[-] all its elements are affinoid $k$-algebras;
\item[-] all the transition maps of the system are isomorphisms in $\bBan_k$ (but not in $\bBan_k^{\le 1}$ where the limit is computed).
\end{itemize}
Moreover, it shows that, although $R$ is defined as the completion of a finitely generated algebra over $k$, $\Spa(R)$ is to be considered as an infinite dimensional analytic space over $\Spa(k)$. 
Furthermore, the fact that the transition maps of the above contracting inductive system are isomorphisms in $\bBan_k$ implies that 
\[ \Spa(R_n) \cong \limpro_{n \in \N} \Spa(R_n) \cong \Spa(R). \]
Now, by Proposition \ref{prop:field_spectra_comparison},  we have canonical morphisms
\[ \Spa(R) \to |\Spa_\Rat^h(R)| \to \Spa(R_n) \]
given by the (derived) base change functors on the site of rational localizations. We need to check that the functor inducing $\Spa(R) \to |\Spa_\Rat^h(R)|$ is an equivalence. It is clearly fully faithful because the composition with the functor inducing $|\Spa_\Rat^h(R)| \to \Spa(R_n)$ is fully faithful. And it is obviously essentially surjective, therefore $\Spa(R) \cong |\Spa_\Rat^h(R)|$. Moreover, since rational subsets of $|\Spa_\Rat^h(R)|$ are determined by their intersections with $\cM(R)$ we can use the same reasoning as in \cite[Lemma 5.32]{BeKr} to show that $|\Spa_\Zar^h(R)| \cong |\Spa_\Rat^h(R)|$.

We put this observation in the form of a proposition.

\begin{prop} \label{prop:non-sheafy_adic_spectrum_agree}
The non-sheafy (in the usual sense) Banach $k$-algebra $R$ satisfies the property
\[ \Spa(R) \cong |\Spa_\Zar^h(R)| \cong |\Spa_\Rat^h(R)|. \] 
\end{prop}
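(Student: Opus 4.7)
The plan leverages the explicit presentation of $R$ given just above the statement, namely $R \cong {\limind_{n \in \N}}^{\le 1} R_n$ with each $R_n$ an affinoid $k$-algebra and with transition maps that are isomorphisms in $\bBan_k$ (although not in $\bBan_k^{\le 1}$, which is where the colimit is computed). The crucial feature is that rational subsets and rational localizations are invariant under $\bBan_k$-isomorphism, so the whole pro-system should effectively collapse to a single affinoid piece, reducing the statement to the affinoid case already handled by Proposition \ref{prop:affinoid_coverscoincide}.

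First I would use \cite[Remark 2.6.3]{Ked1} to identify $\Spa(R) \cong \limpro_{n} \Spa(R_n)$. Since each transition $R_n \to R_{n+1}$ is an isomorphism in $\bBan_k$, the two norms are equivalent, so the Huber pairs $(R_n, R_n^\circ)$ and $(R_{n+1}, R_{n+1}^\circ)$ coincide and the induced map $\Spa(R_{n+1}) \to \Spa(R_n)$ is a homeomorphism of ringed sites. An inverse limit along homeomorphisms is homeomorphic to each of its terms, hence $\Spa(R) \cong \Spa(R_n)$ for every $n$, and the canonical map $R_n \to R$ puts the rational subsets of $\Spa(R_n)$ and $\Spa(R)$ in canonical bijection.

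Next I would combine Proposition \ref{prop:field_spectra_comparison} with Proposition \ref{prop:affinoid_coverscoincide} to obtain a factorisation
\[ \Spa(R) \to |\Spa_\Rat^h(R)| \to |\Spa_\Rat^h(R_n)| \cong \Spa(R_n), \]
where the second arrow is induced by the canonical $R_n \to R$ via base change on rational localisations. The composite is the homeomorphism of the previous step, which forces the first arrow to be fully faithful at the level of the underlying functors between the sites of rational localisations. Essential surjectivity is immediate from the definition of $|\Spa_\Rat^h(R)|$: every object is a derived rational localisation $R\langle f_1/f_0,\ldots,f_n/f_0\rangle^h$ attached to a tuple $(f_0,\ldots,f_n)$ generating the unit ideal in $R$, and such a tuple cuts out a rational subset of $\Spa(R)$. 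This yields $\Spa(R) \cong |\Spa_\Rat^h(R)|$ as ringed sites, the derived structural sheaf matching the classical one on rational subsets because the latter is already concentrated in degree zero at every finite level $R_n$ and the site-level equivalence identifies the two.

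Finally, to upgrade this to $|\Spa_\Zar^h(R)| \cong |\Spa_\Rat^h(R)|$ I would adapt the argument of \cite[Lemma 5.32]{BeKr}: over a non-Archimedean field $k$, a rational subset of $|\Spa_\Rat^h(R)|$ is detected by its intersection with the Berkovich spectrum $\cM(R)$, so any homotopy Zariski cover is refined by a standard rational one and the tautological continuous map $|\Spa_\Zar^h(R)| \to |\Spa_\Rat^h(R)|$ is an equivalence. The main obstacle I anticipate is ensuring that this refinement step is valid in the derived sense, i.e.\ that it induces quasi-isomorphisms between the respective derived rational localisations and not merely isomorphisms on $\LH^0$; this should follow from the strict Koszul regularity of derived rational localisations proved in Proposition \ref{prop:bornological_localizations}, together with the fact that at each affinoid level derived and underived intersections of rational subsets agree (Proposition \ref{prop:derived_intersection_rational_affinoid}), properties which pass to the filtered colimit computing $R_\born$ and then to $R$ by the base change of Definition \ref{defn:rational_localizations}.
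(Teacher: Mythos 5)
Your proof follows the paper's own argument essentially verbatim: you use that the transition maps of the explicit presentation are isomorphisms in $\bBan_k$ to identify $\Spa(R)\cong\Spa(R_n)$, deduce full faithfulness of the functor inducing $\Spa(R)\to|\Spa_\Rat^h(R)|$ from its factorisation through $\Spa(R_n)$ together with obvious essential surjectivity, and then invoke the reasoning of \cite[Lemma 5.32]{BeKr} to get $|\Spa_\Zar^h(R)|\cong|\Spa_\Rat^h(R)|$, exactly as the paper does. The only inaccuracy is your side remark that the derived structural sheaf ``matches the classical one'' --- it does not, since the paper computes $\LH^1(R\lt T\gt^h\wotimes_R^\L R\lt T^{-1}\gt^h)\cong(Z)\ne 0$ while the classical intersection is concentrated in degree $0$ --- but the proposition only asserts an equivalence of sites, so this does not affect the validity of your argument.
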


In order to better explain what happens in this example we explicitly compute the structural sheaf of $|\Spa_\Rat^h(R)|$ on the standard Laurent cover $\{ U, V \}$ considered so far. Applying Proposition \ref{prop:coverings} we get that the derived Tate-\u{C}ech complex
\[ \Tot( 0 \to R \to R \lt T \gt^h \times R \lt T^{-1} \gt^h \to R \lt T, T^{-1} \gt^h \to 0) \]
is strictly exact. We now write down explicitly the complexes appearing in the derived Tate-\u{C}ech complex and check that it is strictly exact. This requires some elementary, but subtle, computations involving the Banach ring $R$.

\begin{prop} \label{prop:Z_not_closed}
The ideal $(Z)$ is not closed in $R$. Moreover
\[ (Z) = \l \{ \sum_{n \in \Z} a_n Z T^n | \lim_{|n| \to \infty} |a_n| \rho^{-|n|} \to 0 \r \}. \]
\end{prop}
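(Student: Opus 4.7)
\medskip

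My plan is to split the proof into two essentially independent parts: first, an explicit computation of the algebraic ideal $(Z)$; second, the construction of a concrete element lying in the topological closure but outside the algebraic ideal.

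\textbf{Step 1 (identifying $(Z)$ as a subset of $R$).} A general element of $R$ can be written uniquely as
\[ f = \sum_{n \in \Z} c_{n,0} T^n + \sum_{n \in \Z} c_{n,1} Z T^n \]
with $|c_{n,0}| \rho^{-|n|} \to 0$ and $|c_{n,1}| \rho^{|n|} \to 0$ (these are just the two components of the norm \eqref{eq:buzzard_norm} going to zero). Since $Z^2 = 0$ in $R$, multiplying by $Z$ gives
\[ Z f = \sum_{n \in \Z} c_{n,0} Z T^n, \]
so the image of the multiplication-by-$Z$ map, which is precisely $(Z)$, is contained in the set on the right-hand side of the displayed formula. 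Conversely, given any $\sum_n a_n Z T^n$ with $|a_n|\rho^{-|n|} \to 0$, the element $g = \sum_n a_n T^n$ lies in $R$ (as $|a_n|\rho^{-|n|} \to 0$ is exactly the condition on the $T$-part), and $Z g$ reproduces our element. This proves the equality.

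\textbf{Step 2 (non-closedness).} The point is that $R$ contains elements of the form $\sum_n a_n Z T^n$ for which $|a_n|\rho^{|n|} \to 0$ but $|a_n|\rho^{-|n|} \not\to 0$, and these cannot lie in $(Z)$ by Step 1. An explicit witness is
\[ g \doteq \sum_{n \ge 0} Z T^n. \]
The norm of $g$ is $\max_{n \ge 0} \rho^n = 1$ and the tails satisfy $\max_{n > N} \rho^n = \rho^{N+1} \to 0$, so $g$ is a bona fide element of $R$. On the other hand, the coefficients $a_n = 1$ do not satisfy $|a_n|\rho^{-|n|} \to 0$, so by Step 1, $g \notin (Z)$. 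Finally, the finite partial sums $g_N = \sum_{n=0}^N Z T^n$ lie in $(Z)$ trivially (finitely many terms automatically satisfy any decay condition), and $\|g - g_N\| = \rho^{N+1} \to 0$, so $g$ lies in the closure of $(Z)$.

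\textbf{Expected difficulty.} There is no real obstacle; both steps are direct manipulations with the explicit norm \eqref{eq:buzzard_norm}. The only subtle point to keep in mind is the asymmetry of the weights: the $T$-part coefficients are controlled by $\rho^{-|n|}$ while the $Z$-part coefficients are controlled by $\rho^{|n|}$, so multiplying by $Z$ turns a $T$-part (with the stronger decay condition) into a $Z$-part, and this gap is precisely what prevents $(Z)$ from being closed.
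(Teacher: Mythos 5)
Your proof is correct and follows essentially the same route as the paper: use $Z^2=0$ to identify $(Z)$ with the set of series $\sum a_n Z T^n$ whose coefficients satisfy the $T$-part decay condition $|a_n|\rho^{-|n|}\to 0$, and then exploit the mismatch with the weaker $Z$-part condition $|a_n|\rho^{|n|}\to 0$ to see that $(Z)$ is a proper dense subspace of the $Z$-component of $R$. The only (harmless) difference is that the paper concludes by noting density of $(Z)$ in $R_1$ abstractly, whereas you exhibit the explicit witness $\sum_{n\ge 0} Z T^n$ in the closure but not in the ideal.
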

\begin{proof}
Since $Z^2 = 0$, the ideal generated by $Z$ is given by all the elements of the form $Z f$ with $f \in R$ a power-series of the form
\[ f = \sum_{n \in \N} a_n T^n \]
with $a_n \in k$. By the definition of the norm of $R$ in equation \eqref{eq:buzzard_norm} such $f$ are precisely given by power-series for which $\lim_{|n| \to \infty} |a_n| \rho^{-|n|} \to 0$. If we write 
\[ R = R_0 \oplus R_1 \] 
as $k$-Banach spaces, with $R_0$ the subspace of power-series without $Z$ terms and $R_1$ the subspace of power-series with $Z$, then $(Z) \subset R_1$ properly and it is also dense. Therefore, $(Z)$ is not closed.
\end{proof}

Proposition \ref{prop:Z_not_closed} has the counter-intuitive consequence that in the ring $R$ there are elements that one can write as $Z (\sum a_n T^n)$ but they do not belong to the ideal generated by $Z$ (because the series $\sum a_n T^n$ does not converge if it is not multiplied by $Z$). 

\begin{prop} \label{prop:(X-T)_not_closed}
The ideals $(X - T)$ and $(X T - 1)$ are not closed in $R \lt X \gt$.
\end{prop}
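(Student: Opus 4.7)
The plan is to exhibit an element of $R \lt X \gt$ that lies in the closure of $(X - T)$ but not in $(X - T)$ itself, and then deduce the statement for $(XT - 1)$ from a symmetry argument. Guided by Proposition \ref{prop:Z_not_closed}, the natural test element is
\[ g = \sum_{n \ge 0} Z T^n X^n, \]
which lies in $R \lt X \gt$ since $|Z T^n| = \rho^n \to 0$ by \eqref{eq:buzzard_norm}.

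To see that $g \in \overline{(X - T)}$, I would work with the partial sums $g_N = \sum_{n = 0}^N Z T^n X^n$: solving the recurrence $(X - T) f_N = g_N$ coefficient by coefficient produces a series $f_N = \sum_n f_{N,n} X^n$ whose coefficient norms are bounded for $n \le N$ and decay like $\rho^{n - 2N}$ for $n > N$, so $f_N \in R \lt X \gt$ and hence $g_N \in (X - T)$; since $|g - g_N| = \rho^{N+1} \to 0$ in $R \lt X \gt$, this yields $g \in \overline{(X - T)}$. To see that $g \notin (X - T)$, if one had $g = (X - T) f$ with $f = \sum f_n X^n \in R \lt X \gt$, the analogous untruncated recurrence would force $f_n = -\sum_{k = 0}^n T^{2k - n - 1} Z$, whose norm $\rho^{\min_k |2k - n - 1|}$ equals $1$ for $n$ odd and $\rho$ for $n$ even; in particular $(f_n)$ does not tend to zero, contradicting $f \in R \lt X \gt$.

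For $(XT - 1)$, the norm \eqref{eq:buzzard_norm} is manifestly invariant under the involution $T \mapsto T^{-1}$, so this extends to an isometric automorphism $\sigma$ of $R$ and thence, by $X \mapsto X$, to an isometric automorphism of $R \lt X \gt$; since $\sigma$ carries $(X - T)$ to $(X - T^{-1})$ and $XT - 1 = T(X - T^{-1})$ with $T \in R^\times$, the ideals $(XT - 1)$ and $(X - T^{-1})$ agree in $R \lt X \gt$, so closedness of one is equivalent to that of the other and the previous argument applies. The main technical obstacle is the explicit construction of the witnesses $f_N$ together with the tail estimate $|f_{N,n}| \to 0$ for $n > N$: this estimate holds for each finite truncation $g_N$ but fails for $g$ itself, and it is precisely this feature that forces one to pass through partial sums rather than attempting to write $g$ directly as a multiple of $X - T$.
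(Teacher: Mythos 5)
Your argument is correct, but it follows a genuinely different route from the paper's. The paper first shows that $Z$ itself lies in both ideals, by observing that $Z(X-T)^{-1}=-\sum_{n\ge 0}ZT^{-n-1}X^n$ and $Z(XT-1)^{-1}=-\sum_{n\ge 0}ZT^{n}X^{n}$ converge in $R\lt X\gt$; it then identifies $(X-T)\cap R=(XT-1)\cap R=(Z)$ and invokes Proposition \ref{prop:Z_not_closed}: since $R$ is closed in $R\lt X\gt$, a closed ideal would have closed trace on $R$, but $(Z)$ is not closed. Amusingly, your witness $g=\sum_{n\ge 0}ZT^{n}X^{n}$ is, up to sign, exactly the series the paper uses to prove $Z\in(XT-1)$; you instead exhibit it directly as an element of $\overline{(X-T)}\setminus(X-T)$, which makes your proof self-contained (it does not rely on Proposition \ref{prop:Z_not_closed}), whereas the paper's computation of $(X-T)\cap R=(Z)$ is information that gets reused later in the example. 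Your verification is sound: the untruncated formal quotient has coefficients $-Z\sum_{k=0}^{n}T^{2k-n-1}$ of norm $1$ or $\rho$, hence does not lie in $R\lt X\gt$, and since $\mu_{(X-T)}$ determines the coefficients recursively this rules out $g\in(X-T)$; the truncations $g_N$ do lie in the ideal and converge to $g$. Two minor points: once one knows $Z\in(X-T)$ (the paper's first computation), the membership $g_N\in(X-T)$ is immediate because each term $ZT^{n}X^{n}$ is an $R\lt X\gt$-multiple of $Z$, so the truncated recurrence estimate can be bypassed; and your tail estimate $\rho^{\,n-2N}$ is only literally accurate for $n\ge 2N$ (for $N<n<2N$ the coefficient norm is merely bounded), though all that is needed is decay to $0$, which holds. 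Your symmetry argument via the isometric involution $T\mapsto T^{-1}$, carrying $(X-T)$ to $(X-T^{-1})=(XT-1)$, is a nice economy that the paper does not use, as it simply repeats the computation for the second ideal.
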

\begin{proof}
Since 
\[ |T^n Z| = \rho^{|n|} \]
we have that the power-series
\[ \sum_{n = 0}^\infty (-1)^{n+1} Z T^{-n+1} X^n, \ \ \ \sum_{n = 0}^\infty (-1)^{n+1} Z T^n X^n \]
are elements of $R \lt X \gt$. Therefore
\[ (X - T)(\sum_{n = 0}^\infty (-1)^{n+1} Z T^{-n+1} X^n) = Z (X - T)T^{-1}(\sum_{n = 0}^\infty (-1)^{n+1} T^{-n} X^n) = \]
\[ = Z (X - T) T^{-1} (X T^{-1} - 1)^{-1} = Z (X - T)(X - T)^{-1} = Z \]
and
\[ (T X - 1)(\sum_{n = 0}^\infty (-1)^{n+1} Z T^n X^n) = Z (TX - 1)(X T - 1)^{-1} = Z \]
prove that $Z \in (X - T)$ and $Z \in (X T - 1)$. It is then easy to check that 
\[ (X - T) \cap R = (Z), \ \ \ (T X - 1) \cap R = (Z)  \]
proving that the ideals are not closed.
\end{proof}

Proposition \ref{prop:(X-T)_not_closed} has the consequence that the morphisms $\mu_{(X - T)}: R \lt X \gt \to R \lt X \gt$ and $\mu_{(T X - 1)}: R \lt X \gt \to R \lt X \gt$ appearing in the Koszul complexes\footnote{See Notation  \ref{not:koszul_complexes} for our notation about the Koszul complexes.} $R \lt T \gt^h$ and $R \lt T^{-1} \gt^h$ respectively, are not strict. Therefore, we have the following description of the Koszul complexes as objects of $\LH(\bBan_A)$
\begin{equation} \label{eq:explicit_koszul}
R \lt T \gt^h = [ R \lt X \gt \stackrel{\mu_{(X - T)}}{\to} R \lt X \gt ]
\end{equation}
and 
\[ R \lt T^{-1} \gt^h = [ R \lt X \gt \stackrel{\mu_{(T X - 1)}}{\to} R \lt X \gt ] \]
as it is easy to check that the morphisms $\mu_{(X - T)}$ and $\mu_{(X T - 1)}$ are monomorphisms, \ie injective. We notice that the classical part of $R \lt T \gt^h$ is just the usual algebra $R \lt T \gt$ obtained by quotienting $R \lt X \gt$ by the closure of $(X - T)$ (and similarly for $R \lt T^{-1} \gt^h$).

By Proposition \ref{prop:rational_localizations_homotpy_epi} we know that the canonical maps $R \to R \lt T \gt^h$ and $R \to R \lt T^{-1} \gt^h$ are homotopy epimorphisms. We now check this by explicit computations.

\begin{prop} \label{prop:R<T>_homotopy_epi}
The canonical maps $R \to R \lt T \gt^h$ and $R \to R \lt T^{-1} \gt^h$ induce quasi-isomorphisms
\[ R \lt T \gt^h \wotimes_R^\L R \lt T \gt^h \cong R \lt T \gt^h \]
and 
\[ R \lt T^{-1} \gt^h \wotimes_R^\L R \lt T^{-1} \gt^h \cong R \lt T^{-1} \gt^h \]
\ie they are homotopy epimorphism.
\end{prop}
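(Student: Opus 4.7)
My plan is to compute each derived self-intersection explicitly as a Koszul complex in two variables and then reduce it to a Koszul complex in a single variable by a linear change of coordinates. Since $R \lt X \gt$ is flat over $R$ by Proposition \ref{prop:tate_flat}, the complex $R \lt T \gt^h = [R \lt X \gt \stackrel{\mu_{X-T}}{\to} R \lt X \gt]$ consists of flat Banach $R$-modules, so its derived tensor product with itself coincides with the underived one:
\[ R \lt T \gt^h \wotimes_R^\L R \lt T \gt^h \cong K_{R \lt X \gt}(X-T) \wotimes_R K_{R \lt Y \gt}(Y-T) \cong K_{R \lt X, Y \gt}(X-T, Y-T), \]
where I use $R \lt X \gt \wotimes_R R \lt Y \gt \cong R \lt X, Y \gt$.

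Next I would perform the isometric $R$-algebra isomorphism $R \lt X, Y \gt \cong R \lt U, V \gt$ given by $U = X - Y$, $V = Y$ (both sides carry the Gauss norm and the substitution is non-expansive in both directions). Under it the ideal $(X-T, Y-T)$ transforms into $(U, V-T)$, so the Koszul complex factors as
\[ K_{R \lt U, V \gt}(U, V-T) \cong K_{R \lt U \gt}(U) \wotimes_R K_{R \lt V \gt}(V-T) = K_{R \lt U \gt}(U) \wotimes_R R \lt T \gt^h. \]
It then suffices to show $K_{R \lt U \gt}(U) \cong R$ in $D(\bBan_R)$: this is the elementary observation that $\mu_U$ on $R \lt U \gt$ is an isometric monomorphism (it shifts coefficients by one degree without altering their norm) whose cokernel is isometric to $R$ via the constant-coefficient map, yielding a strictly exact sequence $0 \to R \lt U \gt \stackrel{\mu_U}{\to} R \lt U \gt \to R \to 0$. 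Combining the previous identifications gives the first quasi-isomorphism.

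For $R \lt T^{-1} \gt^h = K_{R \lt X \gt}(TX-1)$ the same strategy applies after the change of coordinates $U = Y - X$, $V = X$, which transforms $(TX-1, TY-1)$ into $(TV-1, TU)$. This reduces the second claim to showing $K_{R \lt U \gt}(TU) \cong R$. The key input here, and the only place where the specific ring structure of $R$ matters, is that $T$ is a unit in $R$ (because $R$ is the completion of $k[T, T^{-1}, Z]/(Z^2)$): this makes $\mu_{TU}$ a strict monomorphism on $R \lt U \gt$ (the two-sided bound $|T^{-1}|^{-1} |f| \le |TU \cdot f| \le |T| |f|$ gives equivalence of norms) with cokernel isometric to $R$, yielding a strictly exact sequence analogous to the one above. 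The main subtlety throughout is tracking strictness of the differentials after the change of variables and after multiplication by $T$, but this reduces to elementary Gauss-norm estimates; combining the steps then yields $R \lt T^{-1} \gt^h \wotimes_R^\L R \lt T^{-1} \gt^h \cong R \lt T^{-1} \gt^h$.
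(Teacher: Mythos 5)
Your argument is correct, but it is organized quite differently from the paper's. The paper writes out the total complex $[R\lt X,Y\gt \to R\lt X,Y\gt^2 \to R\lt X,Y\gt]$ and computes its left-heart cohomology degree by degree: vanishing of $\LH^1$ is reduced to the identity $(X-T)\cap(Y-T)=(X-T)(Y-T)$, whose only delicate point is showing $Z\in(X-T)(Y-T)$ via the convergence of $Z(X-T)^{-1}(Y-T)^{-1}$ in $R\lt X,Y\gt$; strictness is then extracted from Banach's Open Mapping Theorem over $k$, and $\LH^0\cong R\lt T\gt^h$ is obtained from the fact that $R\to R\lt T\gt^h$ is an epimorphism of $R$-algebras. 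You instead perform the unipotent coordinate change $(X,Y)\mapsto(U,V)$ and split off the contractible factor $K_{R\lt U\gt}(U)$ (resp.\ $K_{R\lt U\gt}(TU)$, using that $T$ is a unit), so that the derived self-intersection collapses onto $K_{R\lt V\gt}(V-T)$ by flatness of the Tate algebras. This is cleaner and more robust: it avoids the open mapping theorem and the $Z$-specific convergence estimates entirely, uses nothing about $R$ beyond invertibility of $T$, and makes transparent why the same trick cannot work for the mixed intersection $R\lt T\gt^h\wotimes_R^\L R\lt T^{-1}\gt^h$ (no such splitting coordinate exists, consistent with Proposition \ref{prop:derived_intersection}). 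What the paper's computation buys, by contrast, is a hands-on illustration of exactly where the ideal-theoretic subtleties of this non-sheafy $R$ sit, which is the pedagogical point of Example \ref{exa:buzz}. Two small points you should make explicit: (i) the Koszul complex depends on the generating tuple, not only on the ideal, so after the substitution you must also apply the isomorphism $K(f_1,f_2)\cong K(f_1-f_2,f_2)$ induced by the unipotent integral matrix (bounded with bounded inverse, so harmless); (ii) to conclude that the canonical map is a homotopy epimorphism you should note that your composite quasi-isomorphism is realized by setting $U=0$, i.e.\ $X=Y$, which is precisely the multiplication map $R\lt T\gt^h\wotimes_R^\L R\lt T\gt^h\to R\lt T\gt^h$, not merely an abstract isomorphism of the two objects.
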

\begin{proof}
By the explicit description given in equation \eqref{eq:explicit_koszul} it is easy to see that
\[ R \lt T \gt^h \wotimes_R^\L R \lt T \gt^h \cong [ 0 \to R \lt X, Y \gt \stackrel{(- \mu_{(X - T)}, \mu_{(Y - T)})}{\longrightarrow} R \lt X, Y \gt^2 \stackrel{\l ( \substack{\scriptstyle \mu_{(Y - T)} \\ 
\scriptstyle \mu_{(X - T)} }\r)}{\longrightarrow} R \lt X, Y \gt \to 0 ] \]
by computing the total complex. So, we need to check that the left-heart cohomology of $R \lt T \gt^h \wotimes_R^\L R \lt T \gt^h$ is concentrated in degree $0$ and that the $\LH^0$ is isomorphic to $R \lt T \gt^h$. For sure
\[ \LH^n(R \lt T \gt^h \wotimes_R^\L R \lt T \gt^h) \cong 0, \ \ n \ge 3 \]
trivially, then 
\[ \LH^2(R \lt T \gt^h \wotimes_R^\L R \lt T \gt^h) \cong 0 \]
because $R \lt X, Y \gt \stackrel{(-\mu_{(X - T)}, \mu_{(Y - T)})}{\longrightarrow} R \lt X, Y \gt^2$ is injective. We need now to prove that
\[ \ker\l ( \substack{\textstyle \mu_{(Y - T)} \\ 
\textstyle \mu_{(X - T)} } \r) \cong \coim (\mu_{-(X - T)}, \mu_{(Y - T)}). \]
By the Banach's Open Mapping Theorem for $k$ we just need to prove that the map is bijective. By the usual computations of the algebraic Koszul complexes this amounts to check that 
\[ \frac{(X - T)\cap (Y - T)}{(X - T)(Y - T)} = 1. \]
The only non trivial thing to check is that $Z \in (X - T)(Y - T)$. But, similarly to Proposition \ref{prop:(X-T)_not_closed}, one can check that the power-series
\[ Z (\sum_{n = 0} (-1)^{n+1}  T^{-n+1} X^n)(\sum_{n = 0} (-1)^{n+1} T^{-n+1} Y^n) = Z (X - T)^{-1}(Y - T)^{-1} \]
belongs to $R \lt X, Y \gt$ proving that $Z = Z (X - T)^{-1}(Y - T)^{-1}(X - T)(Y - T) \in (X - T)(Y - T)$. This proves that
\[ \LH^1(R \lt T \gt^h \wotimes_R^\L R \lt T \gt^h) \cong 0 \]
Finally, we need to check that 
\[ \LH^0(R \lt T \gt^h \wotimes_R^\L R \lt T \gt^h) \cong R \lt T \gt^h. \]
This is equivalent to say that the morphism $R \to R \lt T \gt^h$ is an epimorphism in the category of $R$-algebras and it is easy to check that this is true because $R \lt T \gt^h$ is a quotient of $R \lt X \gt$ and the universal property of $R \lt X \gt$ in $\bComm(\LH(\bBan_R))$ is (essentially) the same as the universal property in $\bComm(\bBan_R)$. Therefore, every morphism $\phi: R \to C$ extends uniquely to a morphism $R \lt T \gt^h \to C$ via the unique morphism that makes the diagram\[ 
\begin{tikzpicture}
\matrix(m)[matrix of math nodes,
row sep=2.6em, column sep=2.8em,
text height=1.5ex, text depth=0.25ex]
{ 
   R \lt X \gt &    \\
   R \lt T \gt^h & C \\    };
\path[->,font=\scriptsize]
(m-1-1) edge node[auto] {} (m-2-1);
\path[->,font=\scriptsize]
(m-2-1) edge node[auto] {} (m-2-2);
\path[->,font=\scriptsize]
(m-1-1) edge node[auto] {$X \mapsto \phi(T)$} (m-2-2);
\end{tikzpicture}
\]
commutative.

Similar computations can be worked out for the case $R \lt T^{-1} \gt^h$.
\end{proof}

\begin{rmk} \label{prop:not_homotopy_epi}
It is important to remark that the morphism $R \to R \lt T \gt$ is not a homotopy epimorphism. This is one of the reasons why the usual methods do not work for general Banach rings.
\end{rmk}

The computations of Proposition \ref{prop:R<T>_homotopy_epi} permit to describe the restriction map
\[ R \to R \lt T \gt^h \oplus R \lt T^{-1} \gt^h \]
as the map
\[ R \to [ R \lt X \gt \stackrel{\mu_{(X - T)}}{\to} R \lt X \gt ] \oplus [ R \lt X \gt \stackrel{\mu_{(T X - 1)}}{\to} R \lt X \gt ]. \]
We notice that, differently to the restriction map of the complex \eqref{eq:Tate_not_exact} where
\[ \ker( R \to R \lt T \gt \oplus R \lt T^{-1} \gt) = \ol{(Z)}, \]
one has that 
\[ \ker(R \to [ R \lt X \gt \stackrel{\mu_{(X - T)}}{\to} R \lt X \gt ] \oplus [ R \lt X \gt \stackrel{\mu_{(T X - 1)}}{\to} R \lt X \gt ]) = (Z), \]
so still there is a kernel when global sections are restricted to the cover $U, V$, although it is smaller. Then, to understand why the derived Tate-\u{C}ech complex is strictly exact we need to compute the derived intersection
\[ R \lt T \gt^h \wotimes_R^\L R \lt T^{-1} \gt^h. \]

\begin{prop} \label{prop:derived_intersection}
The following isomorphisms hold
\[ \LH^n(R \lt T \gt^h \wotimes_R^\L R \lt T^{-1} \gt^h) \cong 0, \ \ n \ge 2 \]
\[ \LH^1(R \lt T \gt^h \wotimes_R^\L R \lt T^{-1} \gt^h) \cong (Z), \]
\[ \LH^0(R \lt T \gt^h \wotimes_R^\L R \lt T^{-1} \gt^h) \cong \frac{R \lt X , Y \gt}{(X - T, Y T - 1)}. \]
In particular, $R \lt T \gt^h \wotimes_R^\L R \lt T^{-1} \gt^h$ is not concentrated in degree $0$.
\end{prop}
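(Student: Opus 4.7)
The plan is to identify $R \lt T \gt^h \wotimes_R^\L R \lt T^{-1} \gt^h$ with the total complex of the Koszul bicomplex on $R\lt X, Y \gt$ with respect to the pair $(X-T, YT-1)$. Using the explicit form of the two Koszul complexes from \eqref{eq:explicit_koszul} together with the flatness of $R\lt Y\gt$ and $R\lt X\gt$ over $R$ (Proposition \ref{prop:tate_flat}), one obtains a three-term complex
\[
0 \to R\lt X, Y\gt \stackrel{d^{-2}}{\to} R\lt X, Y\gt^2 \stackrel{d^{-1}}{\to} R\lt X, Y\gt \to 0
\]
placed in degrees $-2, -1, 0$, with $d^{-2}(f) = (-(X-T)f, (YT-1)f)$ and $d^{-1}(f,g) = (YT-1)f + (X-T)g$.

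The vanishing of $\LH^n$ for $n \geq 2$ reduces to the injectivity of $d^{-2}$, which is immediate from the injectivity of multiplication by $X-T$ on $R\lt X, Y\gt$ (the same formal power series argument as in Proposition \ref{prop:laurent_localizations_koszul_regular}, extended from $R\lt X\gt$ to $R\lt X, Y\gt$ by tensoring with $R\lt Y\gt$). For $\LH^0$, the cokernel of $d^{-1}$ taken in $\LH(\bBan_R)$ is $[\coim(d^{-1}) \hookrightarrow R\lt X, Y\gt]$; unraveling the canonical mono identifies $\coim(d^{-1})$ with the \emph{algebraic} image of $d^{-1}$, namely the algebraic ideal $(X-T, YT-1) \subset R\lt X, Y\gt$. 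This gives the required description of $\LH^0$ in $\LH(\bBan_R)$.

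The main content is $\LH^1$. I would first construct a map $\Psi: (Z) \to \LH^1$ as follows. For each $z \in (Z)$, Proposition \ref{prop:(X-T)_not_closed} and its obvious analogue for the ideal $(YT-1)$ in $R\lt Y\gt$ produce explicit elements $\alpha_z \in R\lt X\gt$ with $(X-T)\alpha_z = z$ and $\beta_z \in R\lt Y \gt$ with $(YT-1)\beta_z = z$; set $\Psi(z) = [(\beta_z, -\alpha_z)] \in \ker(d^{-1})/\im(d^{-2})$. For injectivity: if $(\beta_z, -\alpha_z) = d^{-2}(h)$ for some $h = \sum_k h_k X^k \in R\lt X, Y\gt$ with $h_k \in R\lt Y\gt$, then matching $X^k$-coefficients in $-(X-T)h = \beta_z$ forces the recursion $h_k = T^{-k} h_0$ with $h_0 = T^{-1} \beta_z$. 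Using the explicit formula for $\beta_z$ (a series of the form $\sum c_n Z T^{n-1} Y^n$), one computes that the coefficient of $Y^{k+1} X^k$ in the resulting formal series is a non-vanishing constant multiple of $Z$ for infinitely many $k$; in particular, the decay condition defining $R\lt X, Y\gt$ fails whenever $z \neq 0$, so no such $h$ exists.

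For surjectivity of $\Psi$: given $(f, g) \in \ker(d^{-1})$, applying the continuous map $X \mapsto T$ to the identity $(YT-1) f = -(X-T) g$ yields $(YT-1) f|_{X=T} = 0$, and injectivity of $\mu_{YT-1}$ on $R\lt Y\gt$ forces $f \in \overline{(X-T)}$. One then applies the recursion $u_k = T^{-1}(u_{k-1} + f_k)$ to try to eliminate the $X$-dependence of $f$ by subtracting $d^{-2}(u)$; the obstruction to convergence of $u$ in $R\lt X, Y\gt$ is exactly captured by an element $z \in (Z)$, and after a symmetric reduction in the $Y$-variable applied to $g$ one identifies $[(f, g)]$ with $\Psi(z)$ for a unique $z \in (Z)$. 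The principal obstacle will be the bookkeeping for this reduction: quantifying precisely how the failure of convergence in $R\lt X, Y\gt$ corresponds to the non-closed algebraic ideal $(Z)$ described in Proposition \ref{prop:Z_not_closed}, and verifying that the identification respects the natural $\LH(\bBan_R)$-structure on both sides.
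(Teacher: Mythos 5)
Your reduction to the three-term total complex, the vanishing of $\LH^{n}$ for $n\ge 2$ via injectivity of $d^{-2}$, and the description of $\LH^{0}$ all match the paper's computation (which simply refers back to the analogous self-intersection in Proposition \ref{prop:R<T>_homotopy_epi}). For $\LH^{1}$ you take a genuinely different route: the paper invokes the classical Koszul identity
\[ \LH^1(R \lt T \gt^h \wotimes_R^\L R \lt T^{-1} \gt^h) \cong \frac{(X-T)\cap(YT-1)}{(X-T)(YT-1)}, \]
valid because $\mu_{(X-T)}$ and $\mu_{(YT-1)}$ are injective on $R\lt X,Y\gt$, and then exhibits a nonzero class by checking that $Z(X-T)^{-1}(YT-1)^{-1}$ fails to converge (along a diagonal $m-n=\mathrm{const}$ the coefficient of $X^nY^m$ is a fixed nonzero multiple of $Z$). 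Your injectivity argument for $\Psi$ is exactly this computation run for a general $z\in(Z)$, and it is correct; the Koszul-identity route is still worth adopting, since it converts the whole of $\LH^1$ into a concrete statement about two ideals rather than a cycle-by-cycle reduction, and the strictness issues you worry about at the end are then absorbed by the Open Mapping Theorem over $k$ exactly as in Proposition \ref{prop:R<T>_homotopy_epi}.

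The genuine gap is surjectivity of $\Psi$, i.e.\ the inclusion $(X-T)\cap(YT-1)\subseteq (X-T)(YT-1)+(Z)$, which is the actual content of $\LH^1\cong(Z)$ as opposed to $\LH^1\neq 0$. Two problems with your sketch. First, ``the continuous map $X\mapsto T$'' does not exist: $|T|=\rho^{-1}>1$ in $R$ while $|X|=1$ in $R\lt X,Y\gt$, so $\sum f_kX^k\mapsto\sum f_kT^k$ need not converge. What you can do is pass to the quotient $R\lt X,Y\gt\to R\lt X,Y\gt/\overline{(X-T)}\cong R\lt T\gt\lt Y\gt$, on which $\mu_{(YT-1)}$ is still injective; this yields $f\in\overline{(X-T)}$, but by Proposition \ref{prop:(X-T)_not_closed} the closure strictly contains the algebraic ideal $(X-T)$, and the discrepancy is precisely where the $(Z)$-contribution lives, so this step does not by itself place $[(f,g)]$ in the image of $\Psi$. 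Second, the recursion $u_k=T^{-1}(u_{k-1}+f_k)$ together with the claim that ``the obstruction to convergence is exactly captured by an element of $(Z)$'' is asserted rather than proved: you must show that after subtracting a boundary the cycle has exactly the shape $(\beta_z,-\alpha_z)$ and that no further classes survive. To be fair, the paper's own proof is also silent on this point — it only verifies that the class of $Z$ is nonzero — but since your stated goal is the full isomorphism with $(Z)$, this step needs to be carried out.
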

\begin{proof}
The computation of $\LH^0$ is similar to the computation done in Proposition \ref{prop:R<T>_homotopy_epi}. We do not repeat it here. It is more interesting to understand why there is cohomology in degree $1$. Again, using the theory of Koszul complexes, we have that 
\[ \LH^1(R \lt T \gt^h \wotimes_R^\L R \lt T^{-1} \gt^h) \cong \frac{(X - T) \cap (Y T - 1)}{(X - T)(Y T - 1)}. \]
Now suppose that $Z \in (X - T)(Y T - 1)$, this means that the power-series
\[ Z (X - T)^{-1}(Y T - 1)^{-1} \]
converges in $R \lt X , Y \gt$ but
\[ Z (\sum_{n = 0}^\infty T^{-n + 1} X^n)(\sum_{n = 0}^\infty T^n Y^n) = Z (\sum_{n, m = 0}^\infty T^{-n + 1 + m} X^n Y^m)  \]
but for the terms with $n = m + 1$ we get 
\[ Z ( \sum_{n = 0}^\infty X^n Y^{n - 1} ) =  \sum_{n = 0}^\infty Z X^n Y^{n-1} \]
that is not a convergent series, therefore $Z (X - T)^{-1}(Y T - 1)^{-1}$ does not belong to $R \lt X, Y \gt$. Therefore, $Z \not\in (X - T)(Y T - 1)$.
\end{proof}

We do not give an elementary proof of the fact that that the morphism $R \to R \lt T \gt^h \wotimes_R^\L R \lt T^{-1} \gt^h$ is a homotopy epimorphism as the computations are quite long and it is not necessary as one of the basic properties of the derived tensor product is the preservation of homotopy epimorphisms (\cf Corollary \ref{cor:tensor_pushouts}).

With this last piece of computation we have an explicit description of the derived Tate-\u{C}ech complex of the cover $U,V$ of $\Spa(R)$, that can be written as
\[ 
\begin{tikzpicture}
\matrix(m)[matrix of math nodes,
row sep=2.6em, column sep=2.8em,
text height=1.5ex, text depth=0.25ex]
{ 
    & &       &    0  \\
    & &  0     &  R \lt X , Y \gt  \\
    & 0 & R \lt X \gt \oplus R \lt Y \gt  & R \lt X, Y \gt^2     \\
  0 & R & R \lt X \gt \oplus R \lt Y \gt & R \lt X , Y \gt   & 0  \\ };
\path[->,font=\scriptsize]
(m-4-1) edge node[auto] {} (m-4-2);
\path[->,font=\scriptsize]
(m-4-2) edge node[auto] {} (m-4-3);
\path[->,font=\scriptsize]
(m-4-3) edge node[auto] {} (m-4-4);
\path[->,font=\scriptsize]
(m-4-4) edge node[auto] {} (m-4-5);

\path[->,font=\scriptsize]
(m-3-2) edge node[auto] {} (m-3-3);
\path[->,font=\scriptsize]
(m-3-3) edge node[auto] {} (m-3-4);

\path[->,font=\scriptsize]
(m-2-3) edge node[auto] {} (m-2-4);

\path[->,font=\scriptsize]
(m-3-2) edge node[auto] {} (m-4-2);
\path[->,font=\scriptsize]
(m-3-3) edge node[auto] {} (m-4-3);
\path[->,font=\scriptsize]
(m-3-4) edge node[auto] {} (m-4-4);
\path[->,font=\scriptsize]
(m-2-3) edge node[auto] {} (m-3-3);
\path[->,font=\scriptsize]
(m-2-4) edge node[auto] {} (m-3-4);
\path[->,font=\scriptsize]
(m-1-4) edge node[auto] {} (m-2-4);

\end{tikzpicture}
\]
then the total complex is given by
\[ 0 \to R \oplus R \lt X \gt \oplus R \lt Y \gt \oplus R \lt X , Y \gt \stackrel{\a}{\to} R \lt X \gt \oplus R \lt Y \gt \oplus R \lt X, Y \gt^2 \stackrel{\be}{\to} R \lt X , Y \gt \to 0. \]

Notice that the morphism $\be$ is obviously surjective. The morphism $\a$ is given by
\[ (r, f, g, h) \mapsto (r + (X - T) f, r + (Y T - 1) g, (Y T - 1)h - f, -(X - T)h - g) \]
and it is easy to check that it is injective, because 
\[ r + (X - T) f = 0 \]
implies $(X - T) f \in (Z)$ and the same for $g$. And in such cases there is no $h$ such that
\[ (Y X - 1)h - f = 0 \]
and
\[ (X - T)h - g = 0 \]
because these equalities imply $h = Z (X - T)^{-1}(Y X - 1)^{-1} h'(T)$, with $h'(T) \in R$, but we have seen in the proof of Proposition \ref{prop:derived_intersection} that such an $h$ is not an element of $R \lt X, Y \gt$.
Therefore, using Banach's Open Mapping Theorem for $k$ we just need to check that $\im(\a) = \ker(\be)$ set-theoretically to deduce the strict exactness of the sequence. So, suppose that $x \in R \lt X \gt \oplus R \lt Y \gt \oplus R \lt X, Y \gt^2$ is such that
\[ \be(x) = 0. \]
We need to show that $x \in \im(\a)$. If we write $x = (x_1, x_2, x_3, x_4)$ then we have that
\[ \be(x) = x_1 - x_2 + x_3(X - T) + x_4(Y T - 1). \]
Comparing term-wise we get that
\[ x_3(X - T) + x_4(Y T - 1) \]
cannot have any term with $X^n Y^m$ with both $n,m \ge 1$. Therefore, 
\[ x_3 = (h (Y T - 1) + (X - T)^{-1}p + f), \ \ x_4 = (-h (X - T) + (Y T - 1)^{-1}p + g) \]
with $f \in R \lt X \gt$ and $g \in R \lt Y \gt$ and $p \in (Z)$. This implies that $x_1 = (X - T) f$ and $x_2 = g (Y T - 1)$ proving that $x \in \im(\a)$, because the map $(X - T) R \lt X \gt \oplus (Y T - 1) R \lt Y \gt \to R \lt X , Y \gt$ has $(Z)$ as kernel.

Roughly speaking, what is happening in this example is that the global sections that restrict to zero in $\LH^0$ are shifted in degree $1$ in cohomology, a phenomenon that can be understood only within the framework of derived geometry. If we represent the left-heart cohomology of the double complex that computes the derived Tate-\u{C}ech complex we get
\[ 
\begin{tikzpicture}
\matrix(m)[matrix of math nodes,
row sep=2.6em, column sep=2.8em,
text height=1.5ex, text depth=0.25ex]
{ 
    & &       &    0  \\
    & 0 &  0  &   (Z) \\
  0 & R_0 \oplus (Z) &  R_T \oplus R_{T^{-1}} & R_{T, T^{-1}}  & 0  \\ };
\path[->,font=\scriptsize]
(m-3-1) edge node[auto] {} (m-3-2);
\path[->,font=\scriptsize]
(m-3-2) edge node[auto] {} (m-3-3);
\path[->,font=\scriptsize]
(m-3-3) edge node[auto] {} (m-3-4);
\path[->,font=\scriptsize]
(m-3-4) edge node[auto] {} (m-3-5);

\path[->,font=\scriptsize]
(m-2-2) edge node[auto] {} (m-3-2);
\path[->,font=\scriptsize]
(m-2-3) edge node[auto] {} (m-3-3);
\path[->,font=\scriptsize]
(m-2-4) edge node[auto] {} (m-3-4);
\path[->,font=\scriptsize]
(m-1-4) edge node[auto] {} (m-2-4);

\end{tikzpicture}
\]
where we have written $R = R_0 \oplus (Z)$ (that makes sense as $R$-modules in $\LH(\bBan_R)$), $R_T = \LH^0(R \lt T \gt^h)$, $R_{T^{-1}} = \LH^0(R \lt T^{-1} \gt^h)$ and $R_{T, T^{-1}} = \LH^0(R \lt T, T^{-1} \gt^h)$. Hence (again roughly speaking), the total complex computes the cohomology of the complex
\[ 0 \to R_0 \oplus (Z) \to  R_T \oplus R_{T^{-1}} \oplus (Z) \to R_{T, T^{-1}} \to 0 \]
that is manifestly exact because $(Z)$ is mapped isomorphically to $(Z)$, that is a contribution coming from the derived intersection of $U$ and $V$ that is missing in the non-derived Tate-\u{C}ech complex. 

We conclude Example \ref{exa:buzz} by remarking that from the derived sheaf 
\[ U \l( \frac{f_1}{f_0}, \ldots, \frac{f_n}{f_0} \r) \mapsto K_{R \lt X_1, \ldots, X_n \gt} (f_0 X_1 - f_1, \ldots, f_0 X_n - f_n) \]
one can recover the structural pre-sheaf defined in Huber's theory by considering the pre-sheaf
\[ U \l( \frac{f_1}{f_0}, \ldots, \frac{f_n}{f_0} \r) \mapsto c(\LH^0(K_{R \lt X_1, \ldots, X_n \gt} (f_0 X_1 - f_1, \ldots, f_0 X_n - f_n))). \]
\end{exa}

We hope that the computations done in Example \ref{exa:buzz} convinced the reader of the powerfulness of the results proved in Section \ref{sec:rational}.

\section{Conclusions}

We give some comments on the results proved in this paper. The main results we have proved show that for any Banach ring $R$, or bornological algebra $R$ satisfying the hypothesis stated in Section \ref{sec:rational}, defined over a strongly Noetherian Tate ring $A$ there exists a homotopical Huber spectrum $|\Spa_\Rat^h(R)|$ canonically associated to $R$, that refines the usual Huber spectrum $\Spa(R)$, that is equipped with the structure of a derived analytic space. These results are by no mean optimal and can be generalized as follows.
\begin{itemize}
\item[-] The hypothesis on $A$ being Tate can be dropped using the theory of reified spaces introduced by Kedlaya \cite{Ked3}. As Kedlaya has proved all the the basic results of the theory of Huber spaces used in this paper in the context of reified spaces, all the proofs should translate verbatim for any \emph{really strongly Noetherian} (in the sense of \cite[Definition 8.4]{Ked3}) Banach ring $A$. As $(\Z, |\minus|_0)$ is really strongly Noetherian and $(\Z, |\minus|_0)$ is the initial object in the category of ultrametric Banach rings we get that the results of the reified version of the results presented in this paper apply to all non-Archimedean Banach rings, without any restriction.
\item[-] All Banach rings can be considered. This leads to some complications as the contracting category of Banach modules over a non-ultrametric ring is not additive. Moreover, the correct settings in which the general version of the theory must be written is that of reified space, because one needs to consider polydisks of any real polyradius in order to obtain the correct constructions.
\item[-] The theory can be applied to other kind of spaces. For example, Proposition \ref{prop:dagger_open_flat} tells us that the over-convergent analytic functions on polydisks and the analytic functions on open disks have the same formal properties of the closed disks used in this paper (and in the theory of reified spaces). Therefore, the results of this paper should hold, for example, for (infinite dimensional) dagger analytic spaces with similar proofs.
\end{itemize}

\end{document}